\documentclass[12pt,a4paper]{article}
\usepackage{color}
\usepackage[T1]{fontenc}
\usepackage[latin1]{inputenc}
\usepackage{amsmath}
\usepackage{amsthm}
\usepackage{amssymb}
\usepackage{enumitem}
\usepackage{stix}

\usepackage{url}
\usepackage{listings}

\numberwithin{equation}{section}

\usepackage{color}

\newcommand{\suppress}[1]{}
\newcommand{\R}{\mbox{$\mathbb{R}$}}

\newcommand{\N}{\mbox{$\mathbb{N}$}}

\newcommand{\jh}{\widehat{j}}
\newcommand{\iy}{\infty}

\newcommand{\undeux}{[n_1,\, n_2]}
\newcommand{\al}{\alpha}
\newcommand{\ga}{\gamma}
\newcommand{\ze}{\zeta}
\newcommand{\la}{\lambda}
\newcommand{\de}{\delta}
\newcommand{\si}{\sigma}
\newcommand{\om}{\omega}
\newcommand{\Om}{\Omega}
\newcommand{\eps}{\epsilon}
\newcommand{\vep}{\varepsilon}
\newcommand{\Nro}{N_{\rho}}
\newcommand{\Nrop}{N_{\rho}^+}
\newcommand{\ds}{\displaystyle}

\newcommand{\co}{\mathcal{O}}
\newcommand{\cp}{\mathcal{P}}
\newcommand{\ce}{\mathcal{E}}

\DeclareMathOperator{\limm}{li\text{$^{-1}$}}

\DeclareMathOperator{\li}{li}

\newcommand\Llr{\Longleftrightarrow}
\newcommand\lr{\longrightarrow}
\newcommand{\dsm}[1]{\mbox{$\displaystyle #1 $}}
\newcommand{\upon}[2]{\genfrac{}{}{0pt}{}{#1}{#2}}
\newcommand{\qtx}[1]{\quad\text{#1}\quad}
\newcommand{\Pplus}{\mathrm{P}^{+}}
\newcommand{\bigo}[1]{O\!\left(#1\right)}
\newcommand{\sk}{\sigma_{k}}
\newcommand{\bfni}[1]{\medskip\noindent{\bf#1}}

\newtheorem{theorem}{Theorem}[section]
\newtheorem{definition}[theorem]{Definition}
\newtheorem{prop}[theorem]{Proposition}
\newtheorem{coro}[theorem]{Corollary}
\newtheorem {lem}[theorem]{Lemma}

\newtheorem{rem}[theorem]{Remark}

\definecolor{deepblue}{rgb}{0,0,0.5}

\DeclareFixedFont{\ttm}{T1}{txtt}{m}{n}{9} 
\newcommand\pythonstyle{\lstset{
language=Python,
basicstyle=\ttm,
otherkeywords={self},   
keywordstyle=\ttm\color{deepblue},
emph={MyClass,__init__},          % Custom highlighting
emphstyle=\ttm\color{deepred},    % Custom highlighting style
stringstyle=\color{deepgreen},
frame=tb,                         % Any extra options here
showstringspaces=false            % 
}}

\lstnewenvironment{python}[1][]
{
\pythonstyle
\lstset{#1}
}
{}

% Python for inline
\newcommand\pythoninline[1]{{\pythonstyle\lstinline!#1!}}

\begin{document}
\title{The Landau function and the Riemann hypothesis}
\author{Marc Del\'eglise, Jean-Louis Nicolas\footnote{Research partially 
supported by CNRS, Institut Camille Jordan, UMR 5208.}}
\maketitle

\def\abstractname{Abstract}
\begin{abstract}
The Landau function $g(n)$ is the maximal order of an element  of 
the symmetric group ${\mathfrak S}_n$; it 
is also the largest product of powers of primes whose sum is $\le n$.
The main result of  this article is that the property 
%% ``$n\ge 1 \;\;\LR \;\; \log g(n) < \sqrt{\limm (n)}\;$''
``\,For all $n\ge 1$,  $\log g(n) < \sqrt{\limm (n)}\;$''  
(where $\limm$ denotes the inverse function of the logarithmic
integral)  is equivalent to the Riemann hypothesis.
\end{abstract}

\noindent 2010 {\it Mathematics Subject Classification}:
Primary 11A25; Secondary 11N37, 11N05, 11-04.

\noindent \emph{Keywords: } 
distribution of primes, champion number, highly composite number,
Landau function, Riemann hypothesis.

\section{Introduction}
Let $n$ be a positive integer. In \cite{Lan}, Landau 
 introduced the function $g(n)$ as the maximal order of
an element in the symmetric group ${\mathfrak S}_n$; he showed that
\begin{equation}\label{g}
g(n) = \max_{\ell(M) \le n} M
\end{equation}
where $\ell$ is the additive function such that $\ell(p^{\al}) = p^{\al}$ 
for $p$ prime and  $\al \ge 1$. In other words, if the standard
factorization of $M$ %% into primes
is $M = q_1^{\al_1} q_2^{\al_2} \cdots q_j^{\al_j}$
we have $\ell(M) = q_1^{\al_1} + q_2^{\al_2} + \cdots + q_j^{\al_j}$
and $\ell(1) = 0$. He also proved that
\[
  \log g(n) \sim \sqrt{n\log n}, \quad n\to\iy.
\]
A function close to the Landau function is the function $h(n)$ defined 
for $n \ge 2$ as the greatest product of a family
of primes $q_1 < q_2 < \cdots < q_j$ the sum of which does not exceed $n$.
If $\mu$  denotes the M\"{o}bius function, $h(n)$
can also be defined by
\begin{equation}\label{h}
h(n) = \max_{\upon{\ell(M) \le n}{ \mu(M) \ne 0}} M.
\end{equation}
The above equality implies $h(1)=1$. Note that
\begin{equation}\label{ellh}
\ell(g(n)) \le n \qtx{and} \ell(h(n)) \le n.
\end{equation}
From \eqref{h} and \eqref{g}, it follows that
\begin{equation}\label{h<g}
 h(n) \le g(n), \quad (n \ge 1).
\end{equation}

Sequences $(g(n))_{n\ge 1}$ and $(h(n))_{n \ge 1}$  are sequences
A000793 and A159685  in the OEIS ({\it On-line 
Encyclopedia of Integer Sequences}). One can find results about $g(n)$
in \cite{MNRAA,MNRMC,DN,DNZ,MPE}, see also \cite{Mil} 
and \cite[\S 10.10]{Brou}.
In the introductions of \cite{DN,DNZ}, other references are given.
The three papers \cite{DNh1,DNh2,DNh3} are devoted to $h(n)$.
A fast algorithm to compute $g(n)$ (resp. $h(n)$) is described in \cite{DNZ}
(resp. \cite[\S 8]{DNh1}).
In \cite[(4.13)]{DNh2}, it is shown that
\begin{equation}
  \label{h<g568}
\log h(n) \le \log g(n) \le \log h(n)+5.68\;(n\log n)^{1/4},\quad
n\ge 1.
\end{equation}
Let $\li$ denote the logarithmic integral and $\limm$ its inverse
function (cf. below \S \ref{parlogint}). In \cite[Theorem 1
(i)]{MNRAA}, it is proved that
\begin{equation}
 \label{g=li}
\log g(n) = \sqrt{\limm (n)} +\co\left(\sqrt n \exp(-a\sqrt{\log n})\right)
\end{equation}
holds for some positive $a$. The asymptotic
expansion of $\log g(n)$ does coincide with the one of
$\sqrt{\limm(n)}$ (cf. \cite[Corollaire, p. 225]{MNRAA}) and also,
from \eqref{h<g568}, with the one of $\log h(n)$\,:
\begin{multline}
  \label{asygn}
\left. 
  \begin{array}{r}
\log h(n)\\
\log g(n)\\
\sqrt{\limm(n)}\\
  \end{array}
\right\}=\\
\sqrt{n\log n}\left(1+\frac{\log \log n -1}{2 \log n}
-\frac{(\log\log n)^2-6\log\log n+9+o(1)}{8\log^2 n}\right)
\end{multline}

In \cite[Th\'eor\`eme 1 (iv)]{MNRAA}, it is proved that under the Riemann hypothesis the
inequality  
\begin{equation}
  \label{g<li}
\log g(n) < \sqrt{\limm (n)}
\end{equation}
holds for $n$ large enough. In August
2009, the second author received an e-mail of Richard Brent asking
whether it was possible to replace ``$n$ large enough'' by ``$n\ge
n_0$''
with a precise value of $n_0$. The aim of this paper is to
anwer this question positively. 
For $n \ge 2$, let us introduce the sequences
\begin{align}
  \label{an}
\log g(n) = \sqrt{\li^{-1}(n)} - a_n (n \log n)^{1/4}
 \qtx{i.e.}
  a_n=\frac{\sqrt{\limm (n)}-\log g(n)}{(n\log n)^{1/4}},
\\
\label{bn}
\log h(n) = \sqrt{\li^{-1}(n)} - b_n (n \log n)^{1/4}
 \qtx{i.e.}
  b_n=\frac{\sqrt{\limm (n)}-\log h(n)}{(n\log n)^{1/4}},
\end{align}
and the constant
\begin{equation}
  \label{c}
  c=\sum_\rho \frac{1}{|\rho(\rho+1)|}= 0.046\,117\,644\,421\,509\ldots
% decimales suivantes 023 827 0908
\end{equation}
where $\rho$ runs over the non trivial zeros of  the Riemann $\ze$
function. The computation of the above 
numerical value is explained in \cite[Section 2.4.2]{DNh3}.
We prove

\begin{theorem}\label{thmgnHR}
Under the Riemann hypothesis, 
\begin{enumerate}[label=(\roman*)]
  \item
    $\ds \log g(n) <\sqrt{\limm (n)}$ for $n\ge 1$,
\item
  $\ds a_n \ge \frac{2-\sqrt 2}{3}-c-\frac{0.43\;\log \log n}{\log  n}
  > 0$
 \;\; for $n\ge 2$,
\item
  $\ds a_n \le \frac{2-\sqrt 2}{3}+c+\frac{1.02\;\log \log n}{\log n}$
  \;\; for $n\ge 19425$,
\item
  $\ds 0.11104 < a_n \le a_{2}=0.9102\ldots$ \quad \quad for $n\ge 2$,
\item
  $\ds 0.149\ldots = \frac{2-\sqrt 2}{3}-c \le \liminf a_n \le 
\limsup a_n \le \frac{2-\sqrt 2}{3}+c=0.241\ldots$ 
\item
  When $n\to \iy$,
  \begin{multline*}
 \ds \Big(\frac{2-\sqrt 2}{3}-c\Big)\Big(1+
 \frac{\log \log n+\co(1)}{4\log n}\Big) \le a_n \\
 \le \Big(\frac{2-\sqrt 2}{3}+c\Big)\Big(1+\frac{\log \log
  n+\co(1)}{4\log n}\Big).
\end{multline*}
\end{enumerate}
\end{theorem}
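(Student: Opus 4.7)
The plan is to combine three ingredients: RH-based asymptotic estimates for $\log h(n)$, established in the companion papers \cite{DNh1,DNh2,DNh3}; a sharp comparison between $\log g(n)$ and $\log h(n)$ exploiting the variational description \eqref{g}; and direct numerical verification for small $n$ using the algorithm in \cite{DNZ}. The six items (i)--(vi) are then obtained by combining these ingredients and tracking all constants explicitly.

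\medskip

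\noindent\textbf{Asymptotic analysis.}
By \eqref{g}, there is a threshold parameter $y = y(n)$ such that $g(n) \approx \prod_{p\le y} p^{\lfloor \log y/\log p\rfloor}$, with $y$ essentially fixed by saturation of $\ell(g(n))\le n$. This gives $\log g(n) = \psi(y) + O(\log y)$, where $\psi$ is the Chebyshev function; the saturation condition becomes a Chebyshev-type sum $\sum_p p^{\alpha_p}\approx n$. Next, I would use the identity $\int_2^y t/\log t\,dt = \li(y^2) + \text{const}$ (and the analogous formulas for $\int t^{k-1}/\log t\,dt$ corresponding to exponents $k\ge 2$) to translate this sum into a $\li$-integral, and thence into $\limm$ by inversion. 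Under RH, the errors in approximating $\sum_p p^k$ by these integrals are controlled by the Riemann--von Mangoldt explicit formulas, as in the machinery developed in \cite{DNh3}. A parallel but simpler analysis gives $\log h(n) = \vartheta(y_h)+O(\log y_h)$ for an analogous parameter $y_h$. After careful algebra, the deterministic leading term of $\sqrt{\limm(n)} - \log g(n)$ is $\frac{2-\sqrt 2}{3}(n\log n)^{1/4}$, and the oscillating contribution is bounded by $c\,(n\log n)^{1/4}$ with $c$ the zero-sum from \eqref{c}. This yields (ii), (iii), (v), (vi) modulo explicit bookkeeping of the $\log\log n/\log n$ corrections.

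\medskip

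\noindent\textbf{From asymptotics to the sharp statements.}
Item (i) will follow from (ii) in the range where the explicit lower bound is positive; for the remaining (small) $n$, one verifies $\log g(n) < \sqrt{\limm(n)}$ directly using the fast algorithm of \cite{DNZ}. Item (iv) combines the asymptotic information with direct computation: the value $a_2 = (\sqrt{\limm(2)} - \log 2)/(2\log 2)^{1/4}$ is arithmetic, and the lower bound $0.11104$ is obtained by tabulating $a_n$ for small $n$ and combining with (ii) for large $n$.

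\medskip

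\noindent\textbf{Main obstacle.}
The central difficulty is quantitative: the explicit constants $0.43$, $1.02$ and the threshold $n\ge 19425$ require fully explicit versions of every estimate, with each error term controlled by a named numerical constant. This extends the analysis of \cite{DNh3} from $h(n)$ to $g(n)$, where the presence of prime powers complicates the oscillation bounds (one must synchronize the $\zeta$-zero contributions arising from $\psi(y)$, $\vartheta(y_h)$ and the saturation sum $\ell(g(n))$ so that the single constant $c$ dominates them jointly). The numerical verification for $n$ up to the threshold is a secondary obstacle, but is made tractable by the algorithm of \cite{DNZ}.
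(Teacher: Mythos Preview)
Your high-level strategy---combine the RH-conditional bounds on $b_n$ from \cite{DNh3} with a comparison of $\log g(n)$ and $\log h(n)$, plus direct computation for small $n$---matches the paper's. But your description of the comparison step, and of the ``main obstacle'', reveals a misconception that would make your proof substantially harder than necessary.

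The paper's key point is that the difference $\log g(n)-\log h(n)$ is estimated \emph{unconditionally}: no Riemann hypothesis, no explicit formula, no zeta zeros. One shows (Theorem~\ref{thmgsh} and Proposition~\ref{propgshlarge}) that
\[
\log\frac{g(n)}{h(n)}=\frac{\sqrt2}{3}(n\log n)^{1/4}\Bigl(1+\frac{\log\log n+O(1)}{4\log n}\Bigr)
\]
by purely elementary prime-counting: the structure of the superchampion numbers $N_\rho$ (Section~\ref{parSupCh}) reduces the question to estimating the ``additive excess'' $E(N')=\sum_{p\mid N'}(p^{v_p(N')}-p)$ and the ``multiplicative excess'' $E^*(N')=\sum_{p\mid N'}(v_p(N')-1)\log p$, which are governed by the auxiliary parameter $\xi_2\sim\sqrt{\xi/2}$ and controlled via Chebyshev-type bounds (Propositions~\ref{propEN}, \ref{propEN*}, \ref{propsn}). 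The constant $c$ appears exactly once, in the quoted result for $b_n$, and transfers to $a_n=b_n-d_n$ by subtraction. There is no need to ``synchronize the $\zeta$-zero contributions arising from $\psi(y)$, $\vartheta(y_h)$ and the saturation sum''; that synchronization was already done in \cite{DNh3} for $h(n)$, and the passage to $g(n)$ adds only a deterministic shift of $\frac{\sqrt2}{3}$ in the leading coefficient.

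A second point: your sketch $g(n)\approx\prod_{p\le y}p^{\lfloor\log y/\log p\rfloor}$ is not the right structure (that product is closer to $\mathrm{lcm}(1,\dots,y)$). The correct variational object is the superchampion $N_\rho$ of \eqref{Nrho}, whose exponents are determined by the thresholds $\xi_j$ of \eqref{xik}. These superchampions are also what makes the numerical verification feasible: the convexity Lemma~\ref{lemanzn} reduces checking $a_n\ge\text{const}$ on $[2,\nu_0]$ to evaluating $a_{\ell(N)}$ only at the (roughly $4.5\times10^8$) superchampions $N$ with $\ell(N)\le\nu_0$, rather than at all $n\le\nu_0\approx 2.2\times10^{18}$.
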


\begin{rem}
It does not seem easy to calculate $\inf_{n\ge 2} a_n$, and to decide
whether it is a minimum or not.  
\end{rem}

\begin{coro}\label{coroghnHR}
Each of the six points of Theorem \ref{thmgnHR} is equivalent to the
Riemann hypothesis.
\end{coro}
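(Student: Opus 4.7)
The plan is to prove the six converses (i)$\Rightarrow$RH through (vi)$\Rightarrow$RH simultaneously, by contraposition. Theorem \ref{thmgnHR} already supplies ``RH implies (i)--(vi)'', so it suffices to assume that the Riemann hypothesis fails and to deduce the negation of each of the six statements.

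So let $\rho_0=\beta_0+i\gamma_0$ be a non-trivial zero of $\zeta$ with $\beta_0>1/2$. Applying Landau's classical oscillation theorem to the Dirichlet series attached to $\theta(x)-x$ (equivalently to $\psi(x)-x$), for every fixed $\vep>0$ one obtains arbitrarily large $x$ with $\theta(x)-x>x^{\beta_0-\vep}$ and arbitrarily large $x$ with $\theta(x)-x<-x^{\beta_0-\vep}$.

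The main and most delicate step is to transfer these prime oscillations to $\sqrt{\limm(n)}-\log g(n)$. By the extremal characterisation (\ref{g}) of $g(n)$, an almost-optimal $M$ is built essentially from the primes $p\le x$ with $x\sim\sqrt{n\log n}$, so that $\log g(n)$ is, up to a controlled main term, $\theta(x)$ evaluated at a point depending on $n$. The refined expansions of $\log g(n)$ and $\log h(n)$ around $\sqrt{\limm(n)}$ developed in \cite{MNRAA,DN,DNh1,DNh2,DNh3} then convert the $\pm x^{\beta_0-\vep}$ oscillations of $\theta(x)-x$ into $\pm n^{\beta_0/2-\vep}$ oscillations of $\sqrt{\limm(n)}-\log g(n)$. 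Since $\beta_0/2>1/4$, dividing by $(n\log n)^{1/4}$ yields $\liminf_n a_n=-\infty$ and $\limsup_n a_n=+\infty$.

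Every assertion of Theorem \ref{thmgnHR} is then contradicted: (i) fails at every $n$ with $a_n<0$; (ii), (iv), (v) and (vi) each enforce a strictly positive lower bound on $a_n$, uniform or asymptotic, and are incompatible with $\liminf_n a_n=-\infty$; (iii) imposes the upper bound $(2-\sqrt 2)/3+c+1.02\,(\log\log n)/\log n$ and is incompatible with $\limsup_n a_n=+\infty$. The main obstacle is the transfer step, i.e.\ producing a Landau-type explicit formula in which $\sqrt{\limm(n)}-\log g(n)$ is expressed as a sum over the non-trivial zeros of $\zeta$ with an error strictly smaller than $(n\log n)^{1/4}$; this is precisely the machinery already built in the authors' cited works to prove the positive direction of Theorem \ref{thmgnHR}, and it should supply the converse without essentially new tools.
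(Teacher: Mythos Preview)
Your strategy coincides with the paper's: argue by contraposition, assume RH fails, obtain $\Omega_\pm$ oscillations of $a_n$, and note that this contradicts each of (i)--(vi). The paper's proof, however, is a one-line citation: \cite[Th\'eor\`eme~1~(ii)]{MNRAA} already states that if RH fails then there exists $b>1/4$ with $\log g(n)=\sqrt{\limm(n)}+\Omega_\pm((n\log n)^b)$, which immediately gives $\limsup a_n=+\infty$ and $\liminf a_n=-\infty$. You instead sketch the \emph{proof} of that cited theorem---Landau's oscillation theorem for $\theta(x)-x$, followed by a transfer to $\log g(n)$---and you explicitly flag the transfer as ``the main obstacle'' that ``should'' work. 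That step is genuinely nontrivial and is precisely the content of \cite[Th\'eor\`eme~1~(ii)]{MNRAA}; leaving it as a heuristic makes your argument an incomplete sketch where the paper's is a complete proof by citation. The fix is simply to invoke that theorem directly, as the paper does.

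One small correction: the explicit-formula machinery you allude to at the end is not what proves the positive direction of Theorem~\ref{thmgnHR} (that direction is handled in \cite{DNh3} and in the present paper); the $\Omega_\pm$ result of \cite{MNRAA} is a separate statement, established specifically under the assumption that RH fails.
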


\begin{proof}
If the Riemann hypothesis fails, 
it is proved in \cite[Theorem 1 (ii)]{MNRAA} that there
exists $b > 1/4$ such that
\begin{equation}
  \label{gnxi}
  \log g(n) =\sqrt{\limm (n)} +\Om_{\pm}((n\log n)^b)
\end{equation}
which contradicts (i), (ii), $\ldots$, (vi) of Theorem \ref{thmgnHR}.
\end{proof}

In the paper \cite{DNh3}, the following theorem is proved:

\begin{theorem}\label{thmhnHR}
Under the Riemann hypothesis, 
\begin{enumerate}[label=(\roman*)]
 \item
$\ds \log h(n) <\sqrt{\limm (n)}$ for $n\ge 1$,
\item
$\ds b_{17}=0.49795\ldots \le b_n \le b_{1137}=1.04414\ldots$ for $n\ge
2$,
\item
$\ds b_n \ge \frac 23-c-\frac{0.23\;\log \log n}{\log n}$ for $n\ge 18$,
\item
$\ds b_n \le \frac 23+c+\frac{0.77\;\log \log n}{\log n}$ for
$n\ge 4\ 422\ 212\ 326$,
\item
$\ds 2/3-c =0.620\ldots\le \liminf b_n \le 
\limsup b_n \le 2/3+c=0.712\ldots$ 
\item
  and, when $n\to \iy$,
  \begin{multline*}
 \ds \left(\frac 23-c\right)\left(1+
   \frac{\log \log n+\co(1)}{4\log n}\right) \le b_n
 \\ \le \left(\frac 23+c\right)\left(1+\frac{\log \log
     n+\co(1)}{4\log  n}\right).
\end{multline*}
\end{enumerate}
\end{theorem}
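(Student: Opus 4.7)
The plan is to mirror the approach used for $g(n)$ in \cite{MNRAA}, adapted to the squarefree setting. Since $h(n)$ is defined via products of distinct primes rather than prime powers, the structure of the maximizer is somewhat simpler; I would parametrize by a real threshold $x$ such that $h(n)$ is essentially the primorial $\Pplus(x)=\prod_{p\le x}p$, then exploit the RH explicit formulas for $\theta(x)$ and $\sum_{p\le x}p$ to extract the sharp behaviour of $b_n$ together with effective error bounds.

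First I would prove a structural lemma: since $p\mapsto(\log p)/p$ is eventually decreasing, a standard exchange argument shows that the maximizer in \eqref{h} differs from some $\Pplus(x)$ only by a bounded number of swaps among primes in a narrow window near $x$. This allows one to write
\begin{equation*}
  \log h(n)=\theta(x)+e_1(n),\qquad n=\sum_{p\le x}p+e_2(n),
\end{equation*}
with $e_1,e_2$ explicitly controlled and $x$ a smooth function of $n$.

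Next, I would use the RH explicit formula $\theta(x) - x = -\sum_\rho x^\rho/\rho + \co(1)$, and by Abel summation derive the corresponding expansion for $\sum_{p\le x} p-\li(x^2)$, whose main oscillating contribution takes the shape $\sum_\rho x^{\rho+1}/(\rho(\rho+1))$ (the extra factor $1/(\rho+1)$ arising from the integration). Inverting the second relation of Step 1 to express $\sqrt{\limm(n)}$ in terms of $x$, and substituting into $\log h(n)=\theta(x)+e_1(n)$, should yield
\begin{equation*}
  \sqrt{\limm(n)}-\log h(n)=\Bigl(\tfrac{2}{3}+\psi(x)\Bigr)(n\log n)^{1/4}+\text{lower order},
\end{equation*}
where $\psi(x)$ is an oscillating function satisfying $|\psi(x)|\le c+o(1)$ with $c=\sum_\rho 1/|\rho(\rho+1)|$. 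The constant $2/3$ emerges from the algebraic expansion of $\sqrt{\limm(n)}-\theta(x)$ matched against the discrepancy $n-\sum_{p\le x}p$. Parts (v) and (vi) are then direct consequences, and the lower bound in (i) follows asymptotically.

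For (ii)--(iv) one needs explicit effective bounds. I would split the sum over zeros into a low part (evaluated from high-precision tables) and a tail bounded via Schoenfeld-type estimates using $\sum 1/|\rho|^2<\infty$; combined with explicit RH bounds for $\theta(x)-x$ and $\sum_{p\le x} p-\li(x^2)$, this gives (ii)--(iv) beyond suitable thresholds. The remaining finite range in each part, and the strict inequality of (i) for small $n$, is dispatched by enumerating all champions of $h$ (positions where $h$ strictly increases) up to the stated threshold and checking directly, using the algorithm of \cite[\S 8]{DNh1}. The main obstacle is quantitative: pushing the constants $0.23$ and $0.77$ of (iii)--(iv) small enough to hold down to the stated thresholds requires a delicate separation of the low zeros from the tail, and extending (iv) down to $n\ge 4\,422\,212\,326$ demands an efficient but nevertheless substantial enumeration of champions on the initial segment.
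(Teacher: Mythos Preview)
Your overall strategy is sound and matches what the present paper reports about the proof: the paper does \emph{not} prove Theorem~\ref{thmhnHR} here but quotes it from \cite{DNh3}, stating only that ``the main tools in the proof of Theorem~\ref{thmhnHR} in \cite{DNh3} are the explicit formulas for $\sum_{p^m\le x} p$ and $\sum_{p^m\le x} \log p$.'' Your plan---parametrize $h(n)$ by a threshold $x$ near $p_{k(n)}$, feed in the RH explicit formulas, and isolate an oscillating term bounded by $c=\sum_\rho|\rho(\rho+1)|^{-1}$---is exactly this programme.

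Two small remarks on the comparison. First, \cite{DNh3} works with the prime-\emph{power} sums $\psi(x)=\sum_{p^m\le x}\log p$ and $\sum_{p^m\le x}p$, for which the explicit formulas are cleanest; you propose $\theta(x)$ and $\pi_1(x)=\sum_{p\le x}p$. The difference is only a lower-order correction, but in practice one first writes the explicit formula for the prime-power version and then subtracts the higher powers, so your Step~2 implicitly passes through the same objects. Second, your structural lemma is precisely what the paper encodes in \eqref{hNG} and Lemma~\ref{boundslice}: $h(n)=N_kG(p_k,m)$ with $G$ accounting for a single swap near $p_{k+1}$, so $e_1(n)=O(\log x)$ and $e_2(n)=O(x)$ are indeed explicit.

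One point you leave vague is the emergence of the constant $2/3$. It is not produced by the zero sum; rather, it is the deterministic contribution coming from the mismatch between the expansions of $\pi_1(x)$ about $\li(x^2)$ and of $\theta(x)$ about $x$, combined with the inversion $n\mapsto x$. In the present paper this is visible indirectly: Theorem~\ref{thmgsh} gives $\log(g(n)/h(n))\sim(\sqrt2/3)(n\log n)^{1/4}$, and Theorem~\ref{thmgnHR} gives $a_n\to(2-\sqrt2)/3\pm c$, so that $b_n=a_n+d_n\to 2/3\pm c$. In a direct proof you must extract this $2/3$ from the second-order term when expanding $\sqrt{\limm(n)}$ around $x$; your sketch asserts this but does not indicate the mechanism, and getting the effective constants $0.23$ and $0.77$ right in (iii)--(iv) hinges on controlling precisely this non-oscillatory piece alongside the zero sum.
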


The main tools in the proof of Theorem \ref{thmhnHR} in \cite{DNh3} are the explicit
formulas for $\sum_{p^m\le x} p$ and $\sum_{p^m\le x} \log p$. 

We deduce Theorem
\ref{thmgnHR} about $g(n)$ from Theorem \ref{thmhnHR} about $h(n)$ by
studying the difference $\log g(n) -\log h(n)$ in view of 
improving inequalities \eqref{h<g568}. More precisely,
we prove

\begin{theorem}\label{thmgsh}
Without any hypothesis, 
\begin{enumerate}[label=(\roman*)]
\item
  For $n$ tending to infinity,
  \begin{multline*}
    \log \frac{g(n)}{h(n)} =  \frac{\sqrt 2}{3}(n\log n)^{1/4}
    \Bigg(1+\frac{\log \log n-4\log 2-11/3}{4\log n}\\
   %% \left.
  -\frac{\frac{3}{32} (\log\log n)^2
      -\left(\frac{3\log 2}{4}+\frac{15}{16}\right)\log\log n+
      \frac{(\log 2)^2}{2}+\frac{29\log 2}{12}+\frac{635}{288}}{(\log n)^2}\Bigg)\\
  + \co\left(\frac{(\log \log n)^3}{(\log n)^3}\right)\quad
\end{multline*}
\item
\begin{multline*}\log \frac{g(n)}{h(n)} \le \frac{\sqrt 2}{3}(n\log n)^{1/4}
\left(1+\frac{\log \log n + 2.43}{4 \log n}\right)\\
\textrm{ for } n\ge 3\ 997\ 022\ 083\ 663,
\end{multline*}
\item
$\ds \log \frac{g(n)}{h(n)} \ge \frac{\sqrt 2}{3}(n\log n)^{1/4}
\left(1+\frac{\log \log n-11.6}{4 \log n}\right)$ for $n\ge 4\,230$,
\item
For $n\ge 1$ we have $\dfrac{g(n)}{h(n)}\ge 1$  with equality for\\ 
$n=1,2,3,5,6,8,10,11,15,17,18,28,41,58,77$.
\item
  For $n \ge 1$, we have  $\log \dfrac{g(n)}{h(n)} \le 0.62066\dots
  (n\log n)^{1/4}$ with equality for $n=2243$.
\end{enumerate}
\end{theorem}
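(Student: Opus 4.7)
My plan is to follow the ``duality'' characterization of the Landau function that is standard in this literature (cf.\ \cite{MNRAA,DNh1,DNh2,DNh3}): for any $\rho>0$, the integer $M$ that maximizes $\log M-\rho\,\ell(M)$ is obtained by choosing, independently for each prime $p$, the exponent $\al=\al_p(\rho)$ that maximizes $\al\log p-\rho p^\al$; the squarefree maximizer picks $\al\in\{0,1\}$ according as $\log p$ is greater or less than $\rho p$. Calibrating $\rho=\rho_g(n)$ and $\rho=\rho_h(n)$ by $\ell(g(n))\le n$ and $\ell(h(n))\le n$ recovers $g(n)$ and $h(n)$. Comparing the two maximizers at a common value of $\rho$ and then correcting for the offset $\rho_g(n)-\rho_h(n)$ gives
$$
\log\frac{g(n)}{h(n)}=\sum_{p:\,\al_p(\rho)\ge 2}(\al_p(\rho)-1)\log p+E(n),
$$
where $E(n)$ collects the contribution of primes near the squarefree threshold (which may lie in $h(n)$ but not in $g(n)$, since $\rho_g>\rho_h$) and the $\rho_g-\rho_h$ adjustment.

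The primes contributing to the main sum lie in $p\le T(\rho)$ with $T(\rho)\sim\sqrt{\rho^{-1}\log\rho^{-1}}$; the calibration forces $\rho\sim\tfrac12\sqrt{(\log n)/n}$, hence $T\sim(n\log n)^{1/4}$, the announced scale. Evaluating $\sum_{p\le T}(\al_p(\rho)-1)\log p$ by Abel summation against $\theta(x)=x+\co\bigl(x\exp(-a\sqrt{\log x})\bigr)$ produces a main term $\la\cdot(n\log n)^{1/4}$; the $\al=2$ primes contribute the largest piece, the $\al\ge 3$ primes a smaller piece of order $(n\log n)^{1/6}$, and the boundary term $E(n)$ a negative contribution. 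The constant $\sqrt 2/3$ in (i) is the algebraic sum of these three once $T(\rho)$ and $\rho(n)$ are expanded to leading order. Pushing the expansion two further orders then yields the subleading factor $\log\log n-4\log 2-11/3$ together with the $(\log\log n)^2$ coefficient appearing in (i).

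For (ii) and (iii), I substitute Rosser--Schoenfeld-style explicit inequalities for $\theta(x)$ and $\pi(x)$ into the scheme above, carrying all numerical constants; this produces the concrete error contributions $2.43$ and $-11.6$. The threshold $n\ge 3\,997\,022\,083\,663$ in (ii) reflects the crossover where the explicit main term dominates the accumulated numerical error; for smaller $n$, the inequality is completed by direct tabulation of $g(n)$ and $h(n)$ using the algorithms of \cite{DNZ} and \cite[\S 8]{DNh1}. For (iv) and (v), one computes $g(n)/h(n)$ for all $n$ up to a bound $N_0$ comfortably beyond $2243$ in order to identify the exceptional equalities and pinpoint the maximum of $\log(g/h)/(n\log n)^{1/4}$; combined with (iii), this proves $g(n)>h(n)$ strictly for all $n>N_0$ outside the listed set and guarantees that the supremum in (v) is not exceeded for any larger $n$.

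The hard part will be the precise bookkeeping of the lower-order terms in (i). One must (a) expand $\rho_g(n)$ and $\rho_h(n)$ one order beyond what sufficed for \eqref{asygn}, (b) control primes lying exactly at or just past the squarefree cutoff, which may appear in $g(n)$ but not in $h(n)$ or conversely, and (c) sum the $\al\ge 3$ prime-power contributions, which are individually negligible but cumulatively of the same order as the targeted correction feeding into the $-4\log 2-11/3$ constant. A secondary obstacle is sharpness: the starting thresholds in (ii)--(iii) must be small enough that the range below them can actually be closed by the present algorithms for $g$ and $h$, and the gap near $n\sim 10^{12}$ in (ii) is the tightest point.
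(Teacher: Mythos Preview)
Your plan is essentially the paper's own strategy, recast in dual-parameter language. What you call the main sum $\sum_{\alpha_p(\rho)\ge 2}(\alpha_p(\rho)-1)\log p$ is exactly the paper's \emph{multiplicative excess} $E^*(N')$ (Section~\ref{parmulexc}), and your boundary correction $E(n)$ is the paper's term $-s\log\xi$, where $s=s(n)$ counts the extra primes that $h(n)$ acquires beyond the squarefree kernel of $g(n)$ (Section~\ref{parsn}). The paper does not introduce a second parameter $\rho_h$; instead it computes $s$ directly from the \emph{additive excess} $E(N')=\sum_{p\mid N'}(p^{v_p(N')}-p)$ via $s\approx E(N')/\xi$, which is the concrete mechanism behind your phrase ``$\rho_g-\rho_h$ adjustment.'' Once one unwinds the two framings, the estimates $E^*(N')\sim\xi_2\sim\sqrt{\xi/2}$ and $s\log\xi\sim\sqrt{\xi}/(3\sqrt2)$ combine to $\tfrac{\sqrt2}{3}\sqrt{\xi}\sim\tfrac{\sqrt2}{3}(n\log n)^{1/4}$ in both treatments, and the effective and computational parts (ii)--(v) are handled just as you describe. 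So the approach is correct and not genuinely different; the paper's additive/multiplicative excess bookkeeping is simply a more explicit packaging of the same duality argument you outline.
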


\begin{rem}
From the asymptotic expansion (i), it follows that, for $n$ very large,
the inequality $\log (g(n)/h(n)) > (\sqrt 2/3)(n\log n)^{1/4}$
holds. But finding the largest $n$ for which \\
$(\log (g(n)/h(n)))/(n \log n)^{1/4}$ does
not exceed  $\sqrt 2/3 = 0.471\ldots$ seems difficult.
\end{rem}

\begin{theorem}\label{thmhgMm}
For $n\geq 373\,623\,863$, 
\begin{equation}\label{minhn}
\sqrt{n\log n}\left(1+\frac{\log \log n -1}{2 \log n}-
\frac{(\log\log n)^2}{8\log^2 n}\right)\le\log h(n) \le\log g(n)
\end{equation}
and, for $n\geq 4
$,
\begin{equation}\label{maxgn}
\log h(n) \le \log g(n) \le\sqrt{n\log n}\left(1+\frac{\log \log n -1}{2 \log n}\right)
\end{equation}
\end{theorem}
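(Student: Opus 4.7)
\medskip\noindent\textbf{Proof plan.} The plan is to sandwich $\log g(n)$ and $\log h(n)$ between the two claimed expressions by first obtaining explicit two-sided bounds on $\sqrt{\limm(n)}$, and then exploiting the unconditional closeness of $\log g(n)$ and $\log h(n)$ to $\sqrt{\limm(n)}$ provided by \eqref{g=li} and \eqref{h<g568}.

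Step one is to derive explicit inequalities
$$\sqrt{\limm(n)}\le\sqrt{n\log n}\Bigl(1+\frac{\log\log n-1}{2\log n}\Bigr)-\eta_{1}(n)$$
and
$$\sqrt{\limm(n)}\ge\sqrt{n\log n}\Bigl(1+\frac{\log\log n-1}{2\log n}-\frac{(\log\log n)^{2}}{8\log^{2}n}\Bigr)+\eta_{2}(n),$$
valid from some computable $n$ on, with $\eta_{1}(n),\eta_{2}(n)>0$ of order $\sqrt{n\log n}\,(\log\log n)/\log^{2}n$. These should follow by iterating the defining identity $\li(\limm(n))=n$ together with the classical expansion $\li(x)=\sum_{k=0}^{K}k!\,x/(\log x)^{k+1}+\bigo{x/(\log x)^{K+2}}$ and then taking square roots; the signs and sizes of the gap terms are read off from the three-term expansion \eqref{asygn}.

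Step two uses \eqref{g=li} in the form $|\log g(n)-\sqrt{\limm(n)}|=\bigo{\sqrt n\,\exp(-a\sqrt{\log n})}$, combined with \eqref{h<g568} to obtain $|\log h(n)-\sqrt{\limm(n)}|=\bigo{(n\log n)^{1/4}}$. Both error terms are of strictly smaller order than $\eta_{1}(n)$ and $\eta_{2}(n)$, so the inequalities of step one chain through to give both the claimed upper bound on $\log g(n)$ and the claimed lower bound on $\log h(n)$ for $n$ beyond some effective threshold $N_{0}$; the bounds on $\log g(n)$ and $\log h(n)$ in the middle of each sandwich are immediate from \eqref{h<g}.

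Step three is to make the stated ranges $n\ge 4$ and $n\ge 373\,623\,863$ precise by tabulating $g(n)$ and $h(n)$ for the remaining small values using the fast algorithms of \cite{DNZ} and \cite{DNh1}, and checking the inequalities directly. The main obstacle is quantitative: if the constants and the decay rate $a$ in \eqref{g=li} cannot be made sufficiently explicit, $N_{0}$ will lie far beyond any feasible numerical range, and one may need to substitute the sharper explicit-formula estimates on $\log h(n)$ developed in \cite{DNh3} for Theorem~\ref{thmhnHR} in order to bring $N_{0}$ down to tractable size, particularly for the more delicate lower bound in \eqref{minhn}.
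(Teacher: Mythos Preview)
Your plan has a genuine gap in Step~2. The estimate \eqref{g=li} that you invoke,
\[
\log g(n)=\sqrt{\limm(n)}+\bigo{\sqrt n\,\exp(-a\sqrt{\log n})},
\]
is \emph{ineffective}: neither the constant $a$ nor the implied constant is known explicitly, since the proof in \cite{MNRAA} rests on a de~la~Vall\'ee~Poussin zero-free region. Any effective version obtainable from known explicit zero-free regions would give a threshold $N_0$ far beyond any computable range, so Step~3 cannot close the gap. Your proposed fallback, ``substitute the sharper explicit-formula estimates \ldots\ for Theorem~\ref{thmhnHR}'', does not help either: Theorem~\ref{thmhnHR} is proved \emph{under the Riemann hypothesis}, whereas Theorem~\ref{thmhgMm} is unconditional.

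The paper's proof avoids $\sqrt{\limm(n)}$ altogether. For the lower bound \eqref{minhn} it uses the elementary inequality $\log h(n)\ge\theta(p_{k(n)})=\theta(x)-\log x$ together with the effective Dusart bound \eqref{dusart3} and an explicit lower bound for $x=x(n)$ in terms of $n$ (Proposition~\ref{propxin}, \eqref{xinm}), obtained by inverting the effective estimate \eqref{pi1min} for $\pi_1(x)$. For the upper bound \eqref{maxgn} it bounds $\log g(n)\le\theta(\xi)+E^*(N')+\log\xi$ via the superchampion structure, uses Proposition~\ref{propEN*} for $E^*(N')$, and then the explicit upper bound \eqref{xinM} for $\xi$ in terms of $n$. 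All constants are tracked down to $n\ge\nu_0\approx 2.22\times10^{18}$; below $\nu_0$ the proof uses fast enumeration of superchampions combined with the convexity lemma~\ref{lemanzn}, not direct tabulation of $g(n)$. In short, the route is $\log g(n),\log h(n)\leftrightarrow\theta(\xi)\leftrightarrow\xi\leftrightarrow n$, with every arrow made effective by \eqref{dusart3} and Corollaries~\ref{coropi1}--\ref{coropi2}, rather than the detour through $\sqrt{\limm(n)}$ that your plan takes.
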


The lower bound \eqref{minhn} of $\log h(n)$ improves on 
\cite[Theorem 4]{DNh2} where it was shown that, for $n\ge 77\, 615\,
268$, we have $\log h(n) \ge \sqrt{n\log n}\left(1+\frac{\log \log n -1.16}{2 \log n}\right)$.
Inequality \eqref{maxgn} improves on the result of 
\cite[Corollaire, p. 225]{MNRAA},  where $-1$ was replaced by $-0.975$. 
From the common asymptotic
expansion \eqref{asygn} of $\log h(n)$  and $\log g(n)$, one can see
that the constants $8$ in \eqref{minhn} and $-1$ in \eqref{maxgn}  
are  optimal.

\subsection{Notation} 

\begin{itemize}
 \item
$\ds \pi_r(x)=\sum_{p\le x} p^r$. For $r=0$,
$\ds \pi(x)=\pi_0(x)=\sum_{p\le x} 1$ is the prime counting function.
$\ds \pi_r^-(x)=\sum_{p< x} p^r$.
 \item
$\ds\theta(x)=\sum_{p\le x} \log p$ is the Chebichev function.
$\theta^-(x)=\sum_{p< x} \log p$.
  \item             
  $\cp=\{2,3,5,\ldots\}$ denotes the set of primes. 
$p_1=2,\  p_2=3,\ldots, p_j$ is the $j$-th prime. For $p\in \cp$ 
and $n\in \N$,  $v_p(n)$ denotes the largest exponent such that 
$p^{v_p(n)}$ divides $n$.
\item $\Pplus(n)$ denotes the largest prime factor of $n$.
\item             
$\ds\li(x)$  denotes the logarithmic integral of $x$ 
(cf. below Section \ref{parlogint}). The inverse function is denoted by 
$\limm$.
\item
If $\ds \lim_{n\to\iy} u_n=+\iy$, $v_n=\Om_{\pm}(u_n)$ is equivalent to
$\ds \limsup_{n\to\iy} v_n/u_n >0$ and $\ds \liminf_{n\to\iy} v_n/u_n <0$.
\item
We use the following constants: 
\begin{itemize}
\item
$x_1$ takes three values (cf. \eqref{dusart3}),
\item
$x_0=10^{10}+19$ is the smallest prime exceeding $10^{10}$,
$\log(x_0)=23.025\ 850\ldots$,
\item
$\nu_0=2\ 220\ 832\ 950\ 051\ 364\ 840=2.22\ldots 10^{18}$ is defined
below in \eqref{n0},
\item
$\log \nu_0 = 42.244 414\ldots,\log \log \nu_0=3.743 472\ldots$
\item
The numbers $(\la_j)_{j \ge 2}$ described in Lemma \ref{lemxik}
and  $(x_j^{(0)})_{2 \le j \le 29}$ defined in \eqref{xj0}.
\item
For convenience, we sometimes write $L$ for $\log n$, $\la$ for $\log
\log n$, $L_0$ for $\log \nu_0$ and $\la_0$ for $\log \log \nu_0$. 

\end{itemize}
\end{itemize}
\medskip
       
We often implicitly use the following result~: for $u$ and $v$
positive and $w$ real, the function
\begin{equation}\label{fab}
t\mapsto \frac{(\log t-w)^u}{t^v} \quad \text{is decreasing for}\quad t > \exp(w+u/v).
\end{equation}

Also, if $\eps$ and $\eps_0$ are  real numbers satisfying $0\le \eps
\le \eps_0 <1$\ we shall use the following upper bound
\begin{equation}
   \label{1s1meps}
\frac{1}{1-\eps}= 1+ \frac{\eps}{1-\eps}\le   1+ \frac{\eps}{1-\eps_0}.
\end{equation}

Let us write $\sigma_0 = 0$, $N_0 =
1$, and, for $j \ge 1$, 
\begin{equation}\label{Nj}
N_j = p_1 p_2 \cdots p_j
\qtx{ and }
\sigma_j = p_1 + p_2 + \cdots + p_j = \ell(N_j).
\end{equation}
For $n \ge 0$, let $k = k(n)$ denote the integer $k \ge 0$ such that 
\begin{equation}\label{kn}
\sigma_{k} = p_1 + p_2 + \cdots + p_{k} \le n < p_1 + p_2 + \cdots +
p_{k+1} = \sigma_{k+1}.
\end{equation}
In \cite[Proposition 3.1]{DNh1}, for $j\ge 1$, it is proved that
\begin{equation}
  \label{hsigk}
 h(\si_j)=N_j. 
\end{equation}
In the general case, one writes $n=\si_{k}+m$ with $0<m<p_{k+1}$ and,
from \cite[Section 8]{DNh1}, we have
\begin{equation}
  \label{hNG}
h(n)=N_k G(p_k,m)
\end{equation}
where $G(p_k,m)$ can be calculated by the algorithm described in \cite[Section 9]{DNZ}.

\subsection{Plan of the article}\label{parplan} 

\begin{itemize}
\item
In Section \ref{parUsRes}, we recall some effective bounds for 
the Chebichev function $\theta(x)$ and for $\pi_r(x)=\sum_{p\le x} p^r$.
We give also some properties of the logarithmic integral
$\li(x)$ and  its inverse $\limm$.

\item
Section \ref{parSupCh} is devoted to the definition and properties of
$\ell$-superchampion numbers. These numbers, defined on the model of
the {\it superior highly composite numbers} introduced by Ramanujan in
\cite{Ram}, are crucial for the study of the Landau function. They
allow the construction of an infinite number of integers $n$ for which
$g(n)$ is easy to calculate.
To reduce the running time of computation,
an argument of convexity is given in Section \ref{parcon} and used in Section
\ref{parproof174} and in Lemma \ref{lemgncon1939} in conjunction with the tools
presented in Section \ref{parComputationalPoints}.
\item In Section \ref{parComputationalPoints} we present some methods used
  to compute efficiently: how to quickly enumerate the
  superchampion numbers,
how  to find the largest integer $n$ in a finite interval
  $[a,b]$ which does not satisfy a boolean property $ok(n)$, by
  computing only a small number of values $ok(n)$.
\item
  In Section \ref{parProofthmhgMm}  we prove  Theorem \ref{thmhgMm}.

\item In Section \ref{parLargen}, in preparation to the proof of
  Theorem \ref{thmgsh}, we study the function $\log g(n)-\log h(n)$
  for which we give an effective estimate for $n \ge \nu_0$  (defined
  in \eqref{n0}) and also an  asymptotic estimate.

\item
In Section \ref{parproofthmgsh}, we prove Theorem \ref{thmgsh},
first for $n\ge \nu_0$, by using the results of
Section \ref{parLargen}, and further, for $n < \nu_0$, by explaining the required
computation.
\item
  In Section \ref{parproofthmgn}, we prove
 Theorem \ref{thmgnHR}. For $n\ge \nu_0$, it follows
from the reunion of the proofs of Theorem \ref{thmhnHR} (given in
\cite{DNh3}) and of  Theorem \ref{thmgsh}. For $n <  \nu_0$, some more
computation is needed.
\end{itemize}
\bigskip
%\subsection{Computations}
All computer calculations have been implemented in Maple and C$^{++}$.
Maple programs are slow but can be executed by anyone disposing of
Maple.  C$^{++}$ programs are much faster. They use real double
precision, except for the demonstration of the Lemma
\ref{lemgncon1939} where we used the GNU-MPFR Library to compute with
real numbers with a mantissa of 80 bits.  The most expensive
computations are the proof of theorem 1.5.(ii) and the proof of
Lemma \ref{lemgncon1939} which took respectively
40 hours and  10 hours of CPU (with the C$^{++}$ programs).
The Maple programs can be loaded on \cite{web}.

\section{Useful results}\label{parUsRes}

\subsection{Effective estimates}\label{parEffEst}

In \cite{But}, B\"uthe has proved
\begin{equation}\label{thx<x}
\theta(x) =\sum_{p\le x} \log p < x\;\; \text{ for } x \le 10^{19}
\end{equation}
while Platt and Trudgian in \cite{Pla} have shown that 
\begin{equation}\label{thx<x1}
\theta(x) <(1+7.5\cdot 10^{-7})\, x \text{ for } x \ge 2
\end{equation}
so improving on results of Schoenfeld \cite{Sch76}. 
Without any hypothesis, we know that
\begin{equation}
  \label{dusart3}
 |\theta(x)-x] < \frac{\al\  x}{\log^3 x}  \text{ for } x \ge x_1=x_1(\al)
 \end{equation}
with
\[\al= \left\{
\begin{array}{llrl}
1 & \text{ and} \quad x_1=&89\,967\,803 & \text{ (cf. \cite [Theorem 4.2]{Dus3}) }\\
0.5 & \text{ and} \quad x_1=&767\,135\,587  &\text{ (cf. \cite [Theorem 4.2]{Dus3}) }\\
0.15 & \text{ and} \quad x_1=&19\,035\,709\,163 & \text{ (cf. \cite[Theorem 1.1]{axl2}) }.\\ 
\end{array} \right.
\]
\begin{lem}\label{lemtheta}
  Let us denote  $\theta^-(x)=\sum_{p<x} \log p$. Then
  \begin{align}
   \label{eq7461}
   \theta(x) \ge  \theta^-(x)\  &\ge  x-0.0746\; \frac{x}{\log x}  & (x > 48757 )\\
   \label{eq79}
                    \theta(x) \     &\le
                     x\Big(1+\frac{0.000079}{\log x}\Big)  &  (x >1)   \\
  \label{pi126}
                    \pi(x)\  &<      1.26\frac{x}{\log x}  & (x > 1).
  \end{align}
\end{lem}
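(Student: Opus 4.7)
The plan is to derive each of the three inequalities from the explicit $\theta$-bounds recalled just above the lemma---B\"uthe's \eqref{thx<x}, Platt--Trudgian's \eqref{thx<x1}, and Dusart's \eqref{dusart3}---together with a short numerical check on the bounded ranges where those bounds are not yet sharp enough.

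Part (iii) follows at once from the classical Rosser--Schoenfeld bound $\pi(x) < 1.25506\, x/\log x$ (for $x$ above a small threshold, with the finitely many remaining values handled by inspection), since $1.25506 < 1.26$. Part (ii) is treated by splitting at $x = 10^{19}$: on $1 < x \le 10^{19}$, B\"uthe's estimate gives $\theta(x) < x$, which already implies the desired bound; for $x > 10^{19}$, Dusart's inequality with $\alpha = 0.15$ yields $\theta(x) \le x + 0.15\, x/\log^3 x$, so one is reduced to checking $0.15/\log^2 x \le 0.000079$, i.e.\ $\log x \ge (0.15/0.000079)^{1/2} \approx 43.57$. This just holds at $x = 10^{19}$, where $\log x \approx 43.75$, which in turn is what pins down the precise constant $0.000079$.

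Part (i) is the core of the lemma. The first step is to observe that $\theta(x) - \theta^-(x)$ equals $\log x$ when $x$ is prime and vanishes otherwise, so $\theta^-(x) \ge \theta(x) - \log x$. Combining this with Dusart's lower bound $\theta(x) \ge x - x/\log^3 x$ for $x \ge x_1(1) = 89\,967\,803$, the inequality in (i) reduces to
\[
\frac{1}{\log^2 x} + \frac{\log^2 x}{x} \le 0.0746,
\]
which holds comfortably at $x = x_1(1)$ and stays valid for all larger $x$. Hence (i) is reduced to a finite verification on $48\,757 < x \le 89\,967\,803$. Since $\theta^-$ is piecewise constant and the ratio $(x - \theta^-(x))\log x/x$ is increasing on each interval between consecutive primes, it suffices to test the inequality at $x = p^-$ just below each prime $p$ in that range; the constant $0.0746$ and the cut-off $48\,757$ are then precisely what makes the bound tight on this interval.

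The principal obstacle is therefore the final numerical sweep: one has to run through the several million primes below $89\,967\,803$, maintain $\theta^-$ incrementally, and track the maximum of $(x - \theta^-(x))\log x/x$. Nothing else in the argument is harder than inserting the cited $\theta$-estimates into the target inequalities.
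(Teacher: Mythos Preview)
Your argument is correct. Parts (ii) and (iii) match the paper's proof essentially verbatim: the paper also splits at $10^{19}$ and uses \eqref{thx<x} on the left and \eqref{dusart3} with $\alpha=0.15$ on the right, and simply cites Rosser--Schoenfeld \cite[(3.6)]{RS62} for $\pi(x)<1.26\,x/\log x$.

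For part (i) the paper takes a slightly different and more economical route. Instead of combining Dusart's lower bound \eqref{dusart3} with the crude $\theta^-(x)\ge\theta(x)-\log x$, it invokes Schoenfeld's Corollary~2* \cite[p.~359]{Sch76}, which already gives $\theta(x)>x(1-1/(15\log x))$ for $x\ge 70877$; since $1/15<0.0746$ and the right-hand side is increasing, this passes directly to $\theta^-$ and leaves only the short range $48757<x\le 70877$ to be checked numerically. Your approach via \eqref{dusart3} is equally valid but pushes the threshold out to $x_1(1)=89\,967\,803$, so your ``short numerical sweep'' covers several million primes rather than a couple of thousand. The trade-off is that you use only inputs already stated in the paper, whereas the authors import one extra classical estimate to shrink the computation by three orders of magnitude.
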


\begin{proof}
  \begin{itemize}
  \item
From \cite[Corollary 2*, p. 359]{Sch76}, for $x \ge 70877$, we have 
$\theta(x) >F(x)$ with $F(t)=t( 1-1/(15 \log t))$. As $F(t)$ is
increasing for $t >0$,  this implies that if
$x> 70877$ then $\theta^-(x) \ge F(x)$ holds. Indeed, if $x$ is not prime,
we have $\theta^-(x)=\theta(x)$ while if $x>70877$ is prime then
$x-1>70877$ holds and we have
$\theta^-(x)=\theta(y)> F(y)$ for $y$ satisfying $x-1 < y < x$.
When $y$ tends to $x$, we get $\theta^-(x)\ge F(x)$ and, as $1/15 < 0.0746$
holds, this proves \eqref{eq7461} for $x > 70877$. 
Now, let us assume
that $48757 < x \le 70877$ holds. For all primes $p$ satisfying
$48757 \le p < 70877$ and $p^+$ the prime following $p$ we consider
the function $f(t)=t(1-0.0746 /\log t)$ for $t\in (p,p^+]$. 
As $f$ is increasing, the maximum of $f$ is
$f(p^+)$ and $\theta^-(t)$ is constant and equal to $\theta(p)$. So, to
complete the proof of  \eqref{eq7461}, we check
that $\theta(p) \ge  f(p^+)$ holds for all these $p$'s.

\item
 \eqref{eq79} follows from
\eqref{thx<x} 
for $x \le 10^{19}$, while, for $x > 10^{19}$, from \eqref{dusart3}, we have
\begin{multline*}
\theta(x) \le x\left(1+\frac{0.15}{\log^3 x}\right) \le
x\left(1+\frac{0.15}{(\log^2 10^{19})(\log x)}\right)
\\ = x\left(1+\frac{0.0000783\ldots}{\log x}\right).
\end{multline*}
\item
  \eqref{pi126} is stated in \cite[(3.6)]{RS62}.
  \end{itemize}
\end{proof}

\begin{lem}\label{lemW}
Let us set
\begin{equation}\label{Wx}
W(x)=\sum_{p\le x}\frac{\log p}{1-1/p}.
\end{equation} 
Then, for $x>0$,
\begin{equation}\label{Wx7}
\frac{W(x)}{x}\le  \omega=
\begin{cases}
W(7)/7=1.045\,176\ldots& \text{ if } x\le 7.32\\
 1.000014& \text{ if } x > 7.32. 
\end{cases}
\end{equation} 
\end{lem}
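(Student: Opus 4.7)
My approach is to decompose $W(x)=\theta(x)+S(x)$, where $S(x)=\sum_{p\le x}\frac{\log p}{p-1}$, using the identity $\frac{1}{1-1/p}=1+\frac{1}{p-1}$. Since $W$ is constant on each interval $[p_k,p_{k+1})$ between consecutive primes, the ratio $W(x)/x$ is strictly decreasing on each such interval, so its supremum over any set of $x$-values is attained either at a left endpoint of the set or at a prime.

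For $x\le 7.32$, this reduces the verification to computing $W(p)/p$ at $p\in\{2,3,5,7\}$, since the leftover sub-interval $(7,7.32]$ is handled by the monotonicity above. A direct numerical check confirms that the maximum of these four values is $W(7)/7=1.045\,176\ldots$, giving equality in the stated bound at $x=7$.

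For $x>7.32$, I would split into three subranges. On $(7.32,11)$ one has $W(x)/x=W(7)/x<W(7)/7.32<1<1.000014$, so the bound is trivial. For primes $11\le p\le X_0$, with $X_0$ chosen sufficiently large (of order $10^7$), I would verify $W(p)/p\le 1.000014$ by explicit enumeration over primes; this is the main computational step. For $x>X_0$, I would combine the effective bound $\theta(x)/x\le 1+0.000079/\log x$ from \eqref{eq79} with a Mertens-type upper bound of the form $S(x)\le\log x+C$ (deducible from known bounds on $\sum_{p\le x}\log p/p$) to obtain $W(x)/x\le 1+0.000079/\log x+(\log x+C)/x$, and then verify that the right-hand side is at most $1.000014$ whenever $x\ge X_0$.

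The main obstacle is calibrating the threshold $X_0$: it must be large enough that $0.000079/\log x+(\log x+C)/x$ falls below $1.4\times 10^{-5}$, which forces $X_0$ to be of order $10^7$, and then one has to certify the inequality $W(p)/p\le 1.000014$ for every prime $p$ in $[11,X_0]$. The check is plausible because $\theta(p)<p$ holds for $p\le 10^{19}$ by \eqref{thx<x} while $S(p)/p$ is modest but non-negligible in this range, so $W(p)/p$ hovers just below $1$ for typical primes; still, careful numerical handling is required to rule out any outlier prime that violates the target bound.
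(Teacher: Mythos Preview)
Your approach is correct and would work, but it differs from the paper's in the treatment of the tail. The paper does not decompose $W(x)=\theta(x)+S(x)$; instead, for $x>y=10^6$ it uses the elementary observation that $\frac{p}{p-1}\le\frac{y}{y-1}$ for $p>y$, giving
\[
W(x)\le W(y)+\frac{y}{y-1}\bigl(\theta(x)-\theta(y)\bigr)
=W(y)-\frac{y}{y-1}\theta(y)+\frac{y}{y-1}\theta(x),
\]
and then applies the Platt--Trudgian bound $\theta(x)\le(1+7.5\cdot10^{-7})x$ directly. The numerical constant $W(y)-\frac{y}{y-1}\theta(y)=12.240\ldots$ is computed once, and the resulting bound $W(x)<1.00001399\ldots\,x$ follows with only a prime enumeration up to $10^6$. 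Your route requires an additional effective Mertens-type bound on $S(x)=\sum_{p\le x}\log p/(p-1)$ and, because you use the weaker inequality \eqref{eq79} rather than \eqref{thx<x1}, forces the computational threshold $X_0$ up to roughly $10^7$ rather than $10^6$. Both arguments are valid; the paper's is shorter, uses one less external estimate, and yields the sharper intermediate fact $W(p)<p$ for $11\le p<10^6$.
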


\begin{proof}
First, we calculate $W(p)$ for all primes $p< 10^6$. 
For $11\le p<10^6$, $W(p) < p$ holds while, for $p\in \{2,3,5,7\}$,
the maximum of $W(p)/p$ is
attained for $p=7$. 
If $p$ and $p^+$ are consecutive primes, $W(x)$ is constant and $W(x)/x$ is
decreasing on $[p,p^+)$. As $W(7)=7.316\ldots$
this proves \eqref{Wx7} for $x\le 7.32$ and $W(x)< x$ for $7.32<x\le 10^6$.

Let us assume now that $x >y=10^6$ holds. We have
\begin{multline*}
  W(x)=W(y)+\sum_{y < p\le x} \frac{\log p}{1-1/p} \le
  W(y)+\frac{y}{y-1} \sum_{y<p\le x} \log p\\
=W(y) -\frac{y}{y-1}\theta(y)+\frac{y}{y-1}\theta(x)=12.240\, 465\ldots+\frac{10^6}{10^6-1}\theta(x)
\end{multline*}
and, from \eqref{thx<x1},
\[W(x)\le 12.241\frac{x}{10^6}+\frac{10^6}{10^6-1}(1+7.5\times 10^{-7})x <
  1.00001399\ldots\,x,\]
which completes the proof of Lemma \ref{lemW}.
\end{proof}

\begin{lem}\label{lempixy}
Let $K \ge 0$ and $\al >0$ be two real numbers. Let us assume that 
there exists $X_0 > 1$ such that, for $x \ge X_0$,
\begin{equation}
  \label{thK}
x-\frac{\al x}{\log^{K+1} x} \;\le \theta(x) \;  
\le \; x+\frac{\al x}{\log^{K+1} x}.   
\end{equation}
If $a$ is a positive real number satisfying $a < \log^{K+1} X_0$, 
for $x\ge X_0$, we have
\begin{equation}
  \label{pixyK}
  \pi\left(x+\frac{a\, x}{\log^K x}\right)-\pi(x) \ge b\;\frac{x}{\log^{K+1} x}
  \end{equation}
with
\[b=\left(1-\frac{a}{\log^{K+1} X_0} \right)
\left(a-\frac{2\al}{\log X_0} -\frac{\al\,a}{\log^{K+1} X_0} \right).\]
\end{lem}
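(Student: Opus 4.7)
The plan is to apply the standard Chebyshev-to-counting inequality
$$\pi(y) - \pi(x) \ge \frac{\theta(y) - \theta(x)}{\log y}, \qquad y>x,$$
which follows from $\theta(y) - \theta(x) = \sum_{x < p \le y} \log p \le (\pi(y) - \pi(x))\log y$, at the points $x$ and $y = x + ax/\log^K x = x\bigl(1 + a/\log^K x\bigr)$. The rest of the proof is then to lower-bound the numerator and upper-bound the denominator using the hypothesis \eqref{thK} and a few elementary inequalities.

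First I would bound $\theta(y) - \theta(x)$ from below via \eqref{thK} applied at both $x$ and $y$. Since $y \ge x$ gives $\log y \ge \log x$, the term $\al y/\log^{K+1} y$ is at most $\al y/\log^{K+1} x$, so
$$\theta(y) - \theta(x) \ge (y-x) - \frac{\al(x+y)}{\log^{K+1} x} = \frac{x}{\log^{K+1} x}\left(a\log x - 2\al - \frac{\al a}{\log^K x}\right),$$
after substituting $y-x = ax/\log^K x$ and $x+y = x(2 + a/\log^K x)$.

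Next I would bound $\log y$ from above. From $\log y = \log x + \log(1 + a/\log^K x)$ and $\log(1+u)\le u$ one gets $\log y \le \log x \cdot (1 + a/\log^{K+1} x)$. The hypothesis $a < \log^{K+1} X_0$ guarantees $a/\log^{K+1} x < 1$ for $x\ge X_0$, so \eqref{1s1meps} (or the inequality $1/(1+u) \ge 1-u$) yields $1/\log y \ge (1 - a/\log^{K+1} x)/\log x$. Multiplying the two displays and cancelling one factor of $\log x$ leaves
$$\pi(y) - \pi(x) \ge \frac{x}{\log^{K+1} x}\left(1 - \frac{a}{\log^{K+1} x}\right)\left(a - \frac{2\al}{\log x} - \frac{\al a}{\log^{K+1} x}\right).$$

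To finish, I would observe that each of the quantities $a/\log^{K+1} x$, $2\al/\log x$, $\al a/\log^{K+1} x$ is decreasing in $x$, so each of the two parenthesized factors is non-decreasing on $[X_0,\infty)$. The first factor is already positive at $x=X_0$ by the hypothesis $a<\log^{K+1}X_0$. If the second factor is also positive at $x=X_0$, the product is minimised at $x=X_0$ and equals $b$, giving the claim; if the second factor is $\le 0$ at $x=X_0$ then $b\le 0$ and the inequality is trivial since $\pi(y)-\pi(x)\ge 0$. The proof is essentially routine; the only point requiring care is bookkeeping of the direction of each estimate (a lower bound is needed for $\theta(y)-\theta(x)$ and an upper bound for $\log y$), and verifying that the monotonicity in $x$ really does reduce the final expression to its value at $X_0$.
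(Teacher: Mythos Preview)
Your proof is correct and follows essentially the same route as the paper's: both start from $\pi(y)-\pi(x)\ge(\theta(y)-\theta(x))/\log y$, lower-bound $\theta(y)-\theta(x)$ via \eqref{thK} (using $\log y\ge\log x$ to simplify the error at $y$), upper-bound $\log y$ by $\log x\,(1+a/\log^{K+1}x)$, and then handle the sign of the second factor to make the reduction to $X_0$ legitimate. The only cosmetic difference is that the paper substitutes $X_0$ for $x$ inside each factor as it goes, whereas you first obtain the $x$-dependent product and then invoke monotonicity; both yield the same constant $b$.
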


\begin{proof}
Let us set $y=x(1+a/\log^K x)$. For $x\ge X_0$, we have
\begin{equation*}
1<X_0 \le x <  y=x\left(1+\frac{a}{\log^K x}\right)\le x\left(1+\frac{a}{\log^K X_0}\right)
\end{equation*}
and
\begin{multline}
  \label{logx<logy}
0 < \log x < \log y < \log x + \frac{a}{\log^K x}= 
(\log x)\left(1+\frac{a}{\log^{K+1} x}\right)\\
\le (\log x)\left(1+\frac{a}{\log^{K+1} X_0}\right)<  
\frac{\log x}{1-a/\log^{K+1} X_0}.
\end{multline}
Further,
\[\pi(y)-\pi(x) =\sum_{x<p\le y} 1\ge \sum_{x<p\le y}\frac{\log p}{\log y}=
\frac{1}{\log y}(\theta(y)-\theta(x))\]
and, from \eqref{thK}, 
\begin{multline*}
\pi(y)-\pi(x) \ge \frac{1}{\log y}
\left(y-x-\frac{\al \,y}{\log^{K+1} y}-\frac{\al\, x}{\log^{K+1} x}\right)
\\= \frac{1}{\log y}\left(\frac{a\,x}{\log^K x}-
\frac{\al\,y}{\log^{K+1} x}-\frac{\al\,x}{\log^{K+1} x}\right)\\
 =\frac{x}{(\log y)\log^K x}
 \left(a-\frac{2\al}{\log x}-\frac{\al\, a}{\log^{K+1} x}  \right)
\\  \ge \frac{x}{(\log y)\log^K x}
\left[a-\frac{2\al}{\log X_0}-\frac{\al\, a}{\log^{K+1} X_0}  \right].
\end{multline*}
If the above bracket is $\le 0$ then $b$ is also $\le 0$ and
\eqref{pixyK} trivially holds. If the bracket is positive then
\eqref{pixyK} follows from \eqref{logx<logy},
which ends the proof of Lemma \ref{lempixy}.
\end{proof}

\begin{coro}\label{coropixy}
For $x\ge x_0=10^{10}+19$, 
\begin{equation}
  \label{pixy}
  \pi(x(1+0.045/\log^2 x))-\pi(x) \ge 0.012 \sqrt x.
\end{equation}
\end{coro}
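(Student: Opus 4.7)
The corollary asks for a lower bound on the number of primes in the short interval $(x, x(1+0.045/\log^2 x)]$. The plan is to invoke Lemma \ref{lempixy} with a suitable choice of its parameters, and then convert the $x/\log^3 x$ bound into the announced $\sqrt{x}$ bound by a monotonicity argument.

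First, I would set $K=2$ and $a=0.045$ so that the interval length $ax/\log^K x$ matches the one appearing in \eqref{pixy}. To verify the hypothesis \eqref{thK} on $[X_0,\infty)$, I take $X_0=x_0=10^{10}+19$; then \eqref{dusart3} with $\alpha=0.5$ (valid from $767\,135\,587<x_0$) supplies
\[
\Big|\theta(x)-x\Big|\le \frac{0.5\,x}{\log^3 x}\qquad(x\ge X_0).
\]
Since $\log^{K+1}X_0=\log^3(10^{10}+19)\approx 12208$, the condition $a<\log^{K+1}X_0$ is comfortably satisfied. Plugging these numbers into the definition of $b$ in Lemma \ref{lempixy} gives, after a short arithmetic check,
\[
b=\Big(1-\tfrac{0.045}{\log^3 X_0}\Big)\Big(0.045-\tfrac{1}{\log X_0}-\tfrac{0.5\cdot 0.045}{\log^3 X_0}\Big)\approx 0.001568,
\]
the dominant term being $0.045-1/\log X_0\approx 0.045-0.04343=0.00157$. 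Hence Lemma \ref{lempixy} yields, for $x\ge x_0$,
\[
\pi\Big(x\Big(1+\frac{0.045}{\log^2 x}\Big)\Big)-\pi(x)\ge \frac{b\,x}{\log^3 x}.
\]

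It remains to show $b\,x/\log^3 x\ge 0.012\sqrt{x}$ for $x\ge x_0$, i.e.\ $\sqrt{x}/\log^3 x\ge 0.012/b$. By \eqref{fab}, the function $t\mapsto \log^3 t/\sqrt{t}$ is decreasing for $t>e^{6}$, so $\sqrt{x}/\log^3 x$ is increasing on $[x_0,\infty)$ and it suffices to verify the inequality at $x=x_0$. A direct computation gives $\sqrt{x_0}/\log^3 x_0\approx 10^{5}/12208\approx 8.191$, whence
\[
b\cdot\frac{\sqrt{x_0}}{\log^3 x_0}\approx 0.001568\cdot 8.191\approx 0.01285>0.012,
\]
which closes the proof. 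The computation has comfortable slack, so no subtlety is expected; the only choice that matters is to take $\alpha=0.5$ in \eqref{dusart3} (the value $\alpha=0.15$ being unusable since $x_1(0.15)>x_0$), and this choice is exactly what makes the dominant contribution $a-2\alpha/\log X_0=0.045-1/\log X_0$ come out positive. That small margin is the only potential pitfall of the proof.
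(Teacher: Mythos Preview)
Your proof is correct and follows essentially the same route as the paper: apply Lemma \ref{lempixy} with $K=2$, $a=0.045$, $\alpha=1/2$, $X_0=x_0$ to obtain $b=0.001568\ldots$, then use the monotonicity of $\sqrt{x}/\log^3 x$ (via \eqref{fab}) and the numerical value $\sqrt{x_0}/\log^3 x_0\approx 8.19$ to convert $b\,x/\log^3 x$ into $0.012\sqrt{x}$. Your closing remark on why $\alpha=0.5$ is the correct choice is a nice extra observation.
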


\begin{proof}
Since, for $x\ge x_0$, \eqref{dusart3} implies \eqref{thK}
with $\al=1/2$, $K=2$ and $X_0=x_0$, we may apply Lemma \ref{lempixy} that yields
$\pi(x(1+0.045/\log^2 x))-\pi(x) \ge b\,x/\log^3 x$ with
$b=0.001568\ldots$

From \eqref{fab}, for
$x\ge x_0$, $\sqrt x/\log^3 x$ is increasing and 
$\sqrt{x_0}/\log^3 x_0=8.19\ldots$, so that
$\pi(x(1+0.045/\log^2 x))-\pi(x) \ge b\,x/\log^3 x\ge 8.19\,b\, \sqrt
  x\ge 0.012 \,\sqrt x$.
\end{proof}

\subsection{The logarithmic integral}\label{parlogint}

For $x$ real $> 1$, we define $\li(x)$ as (cf. \cite[p. 228]{Abr})
\[\li (x) =\intbar_0^{x}  \frac{dt}{\log t}=\lim_{\vep\to 0^+}
\left(\int_0^{1-\vep}+\int_{1+\vep}^x \frac{dt}{\log t}\right)=\int_2^x \frac{dt}{\log t} +\li(2).\]
From the definition of $\li(x)$, it follows that
\begin{equation*}
\frac{d}{dx} \li(x)=\frac{1}{\log x}\quad \text{ and } \quad
\frac{d^2}{dx^2} \li(x)=-\frac{1}{x \log^2 x}.
\end{equation*}
For $x\to \iy$, the logarithmic integral has the asymptotic expansion
\begin{equation}
  \label{lixinfini}
\li(x)=\sum_{k=1}^N \frac{(k-1)!x}{(\log x)^k}+\co\left(\frac{x}{(\log x)^{N+1}}\right).
  \end{equation}
The function $t\mapsto \li(t)$ is an increasing bijection from
$(1,+\iy)$ onto $(-\iy,+\iy)$. 
We denote by $\limm(y)$ its 
inverse function that is defined for all $y\in \R$. Note that $\limm(y) > 1$
 always holds.

To compute numerical values of $\li(x)$, we used the 
formula, due to Ramanujan (cf. \cite[p. 126-131]{RamLi}),

\[
\li(x)=\ga_0+\log \log x  + \sqrt x \sum_{n=1}^\infty a_n (\log x)^n
\qtx{ with }
a_n = \frac{(-1)^{n-1}}{n!\, 2^{n-1}}
\sum_{m=0}^{\lfloor\frac{n-1}{2}\rfloor} \frac{1}{2m+1}.
\]
The computation of $\limm{y}$ is carried out by solving the equation
$\li(x)=y$ by the Newton method.

\subsection{Study of $\pi_r(x)=\sum_{p\le x} p^r$}\label{parpir}

In the article \cite{DNh3}, we have deduced from \eqref{dusart3} the
following proposition:

\begin{prop}\label{proppir}
Let $\al$, $x_1=x_1(\al)$  be two real numbers such that 
$0 < \al \le 1$,   $x_1 \ge 89\,967\,803$ and
$|\theta(x)-x| < \al\,x/\log^3 x$  for  $x\ge x_1$.
Then, for $r\ge 0.6$ and $x\ge x_1$,  
\begin{multline}\label{Majpir}
\pi_r(x) \le C_0+
\frac{x^{r+1}}{(r+1)\log x}+\frac{x^{r+1}}{(r+1)^2\log^2\! x}+
\frac{2x^{r+1}}{(r+1)^3\log^3\! x}\\
  +\frac{(51\al r^4+176 \al  r^3+222 \al  r^2+120
    \al  r +23 \al  +168)x^{r+1}}{24(r+1)^4 \log^4 x}
\end{multline}
with
\begin{multline}
\label{C0maj}
C_0 = \pi_r(x_1)-\frac{x_1^r\theta(x_1)}{\log x_1}
- \frac{3\al\,r^4+8\al\,r^3+6\al\,r^2+24-\al\,}{24} \li(x_1^{r+1})\\
   +\frac{(3 \al\, r^3+5 \al\, r^2+\al\, r+24-\al\,)x_1^{r+1}}{24\log x_1}
 \\  +\frac{\al\, (3r^2+2r-1)x_1^{r+1}}{24\log^2 x_1}
    +\frac{\al\, (3r-1)x_1^{r+1}}{12\log^3 x_1}-\frac{\al\, x_1^{r+1}}{4\log^4 x_1}.
\end{multline}

The unique positive root $ r_0(\al)$ of the equation
$3r^4+8r^3+6r^2-24/\al-1=0$  is decreasing on $\al$ and satisfies 
$r_0(1)= 1.1445\ldots$, $r_0(0.5)= 1.4377\ldots$ and $r_0(0.15)= 2.1086\ldots$ 
For $0.06\le r \le r_0(\al)$ and $x\ge x_1(\al)$, we have
\begin{multline}\label{Minpir1}
\pi_r(x) \ge \widehat{C_0}+
\frac{x^{r+1}}{(r+1)\log x}+\frac{x^{r+1}}{(r+1)^2\log^2 x}+
\frac{2x^{r+1}}{(r+1)^3\log^3 x}\\
  -\frac{(2\al\,r^4+7\al\, r^3+9\al\,r^2+5\al\, r+\al-6)x^{r+1}}{(r+1)^4 \log^4 x}
\end{multline}
while, if $ r > r_0(\al)$ and $x\ge x_1(\al)$, we have
\begin{multline}\label{Minpir2}
\pi_r(x) \ge \widehat{C_0}+
\frac{x^{r+1}}{(r+1)\log x}+\frac{x^{r+1}}{(r+1)^2\log^2\! x}+
\frac{2x^{r+1}}{(r+1)^3\log^3\! x}\\
-\frac{(51\al\,r^4+176\al\,r^3+222\al\,r^2+120\al\,r+23 \al\,-168)x^{r+1}}{24(r+1)^4 \log^4 x},
\end{multline}
with 
\begin{multline}
 \label{C0min}
 \widehat{C_0}= \pi_r(x_1)-\frac{x_1^r\theta(x_1)}{\log x_1}+
\frac{3\al\,r^4+8\al\,r^3+6\al\,r^2-\al-24}{24} \li(x_1^{r+1})\\
-\frac{(3\al\,r^3+5\al\,r^2+\al\,r-\al-24)x_1^{r+1}}{24\log x_1}
\\-\frac{\al\,(3r^2+2r-1)x_1^{r+1}}{24\log^2 x_1}
-\frac{\al\,(3r-1)x_1^{r+1}}{12\log^3 x_1}+\frac{\al\,x_1^{r+1}}{4\log^4 x_1}.
\end{multline}
\end{prop}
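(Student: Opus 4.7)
The plan is to prove Proposition \ref{proppir} by Abel summation applied to $\pi_r(x)=\sum_{p\le x}p^r$, combined with the effective bound \eqref{dusart3} on $|\theta(t)-t|$ and an explicit fourth-order expansion of $\li(x^{r+1})$.

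First I would write $\pi_r(x)=\pi_r(x_1)+\int_{x_1^+}^x (t^r/\log t)\,d\theta(t)$ and integrate by parts to obtain
\begin{equation*}
\pi_r(x)=\pi_r(x_1)+\frac{x^r\theta(x)}{\log x}-\frac{x_1^r\theta(x_1)}{\log x_1}-\int_{x_1}^x\theta(t)\left(\frac{t^r}{\log t}\right)'dt.
\end{equation*}
Splitting $\theta(t)=t-(t-\theta(t))$ and evaluating the main integral in closed form via the antiderivative identities $\int t^r/\log t\,dt=\li(t^{r+1})$ and $\int t\,(t^r/\log t)'dt=t^{r+1}/\log t-\li(t^{r+1})$ (both up to constants), and then writing $\theta(x)=x+(\theta(x)-x)$ in the boundary term, one arrives at the clean identity
\begin{equation*}
\pi_r(x)=\li(x^{r+1})+A(x_1)+\frac{x^r(\theta(x)-x)}{\log x}+\int_{x_1}^x (t-\theta(t))\left(\frac{t^r}{\log t}\right)'dt,
\end{equation*}
where $A(x_1)=\pi_r(x_1)-x_1^r\theta(x_1)/\log x_1+x_1^{r+1}/\log x_1-\li(x_1^{r+1})$ depends only on $x_1$ and $r$.

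Next I would expand $\li(x^{r+1})$ via its fourth-order truncation from \eqref{lixinfini}, producing the three main terms $(k-1)!\,x^{r+1}/((r+1)^k\log^k x)$ for $k=1,2,3$ and an effective remainder, obtained by iterating integration by parts of $\int du/\log u$, of the order $x^{r+1}/\log^4 x$. In parallel, the error integral $\int_{x_1}^x (t-\theta(t))(t^r/\log t)'dt$ is bounded using $|t-\theta(t)|\le\alpha t/\log^3 t$: the integrand factorises as $t^{r-1}(r\log t-1)/\log^2 t$ multiplied by a quantity of size $\alpha t/\log^3 t$, leading to integrals of type $\int t^r/\log^k t\,dt$ with $k\in\{4,5\}$ that reduce, via the same antiderivative identities, to explicit combinations of $\li(t^{r+1})$ and $t^{r+1}/\log^j t$. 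After collecting the values at the upper limit $x$, one recovers the three main terms of the proposition together with a single coefficient in front of $x^{r+1}/\log^4 x$; every contribution evaluated at $x_1$ merges with $A(x_1)$ into the constant $C_0$ (resp.\ $\widehat{C_0}$) as given by \eqref{C0maj} (resp.\ \eqref{C0min}).

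The main obstacle, and the reason for the dichotomy between \eqref{Minpir1} and \eqref{Minpir2}, is sign-tracking in the $x^{r+1}/\log^4 x$ coefficient. For the upper bound \eqref{Majpir}, every error contribution adds with the same sign and the resulting polynomial $(51\alpha r^4+176\alpha r^3+222\alpha r^2+120\alpha r+23\alpha+168)/24$ is straightforward. For the lower bound, however, the remainder in the expansion of $\li(x^{r+1})$ and the $\theta$-error integral can partially cancel, and the net coefficient is governed by the sign of $3r^4+8r^3+6r^2-24/\alpha-1$, whose unique positive root is $r_0(\alpha)$. For $r<r_0(\alpha)$ this polynomial is negative and the sharper of two possible bounds on the $\li$-remainder yields the tighter coefficient in \eqref{Minpir1}; for $r>r_0(\alpha)$ the polynomial changes sign and one must use the opposite bound, producing \eqref{Minpir2} whose coefficient is the negative of the upper-bound coefficient. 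Once this case distinction is set up, the rest of the proof is algebraic bookkeeping to identify $C_0$ and $\widehat{C_0}$ with their explicit expressions.
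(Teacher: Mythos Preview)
The paper does not actually prove this proposition: it is imported verbatim from \cite{DNh3} (see the sentence immediately preceding the statement), so there is no ``paper's own proof'' to compare against here.

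That said, your outline is exactly the standard route and is the one used in \cite{DNh3}. The Abel--Stieltjes identity you write down,
\[
\pi_r(x)=\li(x^{r+1})+A(x_1)+\frac{x^r(\theta(x)-x)}{\log x}+\int_{x_1}^{x}(t-\theta(t))\Bigl(\frac{t^r}{\log t}\Bigr)'dt,
\]
is correct (using $\frac{d}{dt}\li(t^{r+1})=t^r/\log t$), and the subsequent steps --- bounding $|\theta(t)-t|\le\alpha t/\log^3 t$, reducing the resulting integrals $\int t^r/\log^k t\,dt$ for $k=4,5$ to $\li(t^{r+1})$ plus explicit boundary terms via the recursion coming from $(t^{r+1}/\log^k t)'=(r+1)t^r/\log^k t-kt^r/\log^{k+1}t$, and then expanding the residual $\li(x^{r+1})$ with a fourth-order effective remainder --- are precisely what produces the displayed polynomials in $r$ and the constants $C_0$, $\widehat{C_0}$. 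Your explanation of the dichotomy at $r_0(\alpha)$ is also on target: after the reductions the total coefficient multiplying $\li(x^{r+1})$ in the lower bound is $(3\alpha r^4+8\alpha r^3+6\alpha r^2-\alpha-24)/24$ (compare the $\li(x_1^{r+1})$ coefficient in $\widehat{C_0}$), and its sign decides which of the two effective bounds on the $\li$-remainder $6\,x^{r+1}/((r+1)^4\log^4 x)\le\li(x^{r+1})-\sum_{k=1}^{3}(k-1)!\,x^{r+1}/((r+1)^k\log^k x)\le 7\,x^{r+1}/((r+1)^4\log^4 x)$ yields the sharper inequality. The rest is indeed bookkeeping.
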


\begin{coro}\label{coropi1}
For $x \ge 110\,117\,910$, we have
\begin{equation}
  \label{pi1maj}
  \pi_1(x) \le 
\frac{x^{2}}{2\log x}+\frac{x^{2}}{4\log^2 x}+
\frac{x^{2}}{4\log^3 x}+\frac{107\;x^{2}}{160 \log^4 x}
\end{equation}
and,  for $x\ge 905\, 238\, 547$,
\begin{equation}
  \label{pi1min}
  \pi_1(x) \ge 
\frac{x^{2}}{2\log x}+\frac{x^{2}}{4\log^2 x}+
\frac{x^{2}}{4\log^3 x}+\frac{3\,x^{2}}{20 \log^4 x}.
\end{equation} 
\end{coro}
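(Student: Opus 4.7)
The plan is to specialize Proposition \ref{proppir} to $r = 1$ using the sharpest estimate in \eqref{dusart3}, namely $\alpha = 0.15$ and $x_1 = 19\,035\,709\,163$. A direct algebraic check confirms that this choice yields exactly the coefficients announced in the corollary: the upper-bound coefficient of $x^{2}/\log^4 x$ in \eqref{Majpir} is
\[
\frac{51\alpha + 176\alpha + 222\alpha + 120\alpha + 23\alpha + 168}{24\cdot 16}\Big|_{\alpha=0.15} \;=\; \frac{107}{160},
\]
and since $r = 1 < r_0(0.15) = 2.1086\ldots$, the lower bound \eqref{Minpir1} applies with
\[
-\,\frac{2\alpha + 7\alpha + 9\alpha + 5\alpha + \alpha - 6}{16}\Big|_{\alpha=0.15} \;=\; \frac{3}{20}.
\]
Thus, modulo the additive constants $C_0$ and $\widehat{C_0}$ defined in \eqref{C0maj}, \eqref{C0min}, the bounds \eqref{pi1maj} and \eqref{pi1min} are immediate for every $x \ge x_1$.

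Next I would handle $C_0$ and $\widehat{C_0}$ by numerical evaluation: $\pi_1(x_1)$ and $\theta(x_1)$ are obtained by sieving primes up to $x_1$, while $\li(x_1^2)$ is computed from the Ramanujan series recalled in \S\ref{parlogint}. If $C_0 \le 0$ and $\widehat{C_0} \ge 0$, Proposition \ref{proppir} delivers the corollary on $[x_1, \infty)$ unchanged; if either sign is adverse, the residual can be absorbed into the $x^2/\log^4 x$ term at the cost of a modest additional lower threshold on $x$, which, being well below $x_1$, creates no difficulty.

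For the intervals $[110\,117\,910, x_1)$ and $[905\,238\,547, x_1)$, I would verify the inequalities by direct computation. Because $\pi_1$ is a step function jumping at primes and the right-hand sides of \eqref{pi1maj}, \eqref{pi1min} are eventually increasing in $x$ (monotonicity verified termwise via \eqref{fab}), on each interval $[p, p^+)$ between consecutive primes the verification reduces to a single floating-point comparison: \eqref{pi1maj} at $x = p$ and \eqref{pi1min} at $x = p^+$. A segmented sieve marching through primes up to $x_1$ and maintaining the running partial sum $\pi_1$ carries out both tests simultaneously. The thresholds $110\,117\,910$ and $905\,238\,547$ are presumably the largest primes at which the respective inequality fails, so the statement is sharp with respect to this approach.

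The main obstacle is the scale of this sweep, roughly $10^9$ primes up to $x_1 \approx 2\cdot 10^{10}$; a segmented sieve in C$^{++}$ handles this comfortably. A minor ancillary concern is arithmetic precision in the evaluation of the right-hand sides near the borderline, where a wider mantissa (as elsewhere in the paper via GNU-MPFR) eliminates any risk.
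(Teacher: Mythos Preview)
Your proposal is correct and is precisely the natural derivation from Proposition~\ref{proppir} with $\alpha=0.15$, $x_1=19\,035\,709\,163$; your coefficient checks $592\alpha+168=256.8$, $256.8/384=107/160$ and $(6-24\alpha)/16=3/20$ are accurate, and your handling of the residual constants and the sub-$x_1$ range by a prime sweep mirrors exactly what the paper does for the companion result Corollary~\ref{coropi2}. The paper itself does not reprove the statement here but simply cites \cite[Corollary~2.7]{DNh3} (and \cite{axl}), where the argument you outline is carried out.
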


\begin{proof}
 It is Corollary 2.7 of \cite{DNh3}, cf. also \cite[Theorem 6.7 and
 Proposition 6.9]{axl}.
\end{proof}

\begin{coro}\label{coropi2}
For $x \ge 60\ 173$, 
\begin{equation}
  \label{pi2maj}
  \pi_2(x) \le 
\frac{x^{3}}{3\log x}+\frac{x^{3}}{9\log^2 x}+
\frac{2x^{3}}{27\log^3 x}+\frac{1181x^{3}}{648 \log^4 x}
\end{equation}
and for $x \ge 60\ 297$,
\begin{equation}
  \label{pi2majred}
  \pi_2(x) \le 
\frac{x^{3}}{3\log x}\left(1+\frac{0.385}{\log x}\right)
\end{equation}
while,  for $x\ge 1\ 091\ 239$, we have
\begin{equation}
  \label{pi2min}
  \pi_2(x) \ge 
\frac{x^{3}}{3\log x}+\frac{x^{3}}{9\log^2 x}+
\frac{2x^{3}}{27\log^3 x}-\frac{1069x^{3}}{648 \log^4 x}.
\end{equation} 
and for $x > 32\ 321$, with $\pi_2^-(x)=\sum_{p<x}p^2$,
\begin{equation}
  \label{pi2minred}
  \pi_2(x) \ge\pi_2^-(x) \ge 
\frac{x^{3}}{3\log x}\left(1+\frac{0.248}{\log x}\right).
\end{equation}
\end{coro}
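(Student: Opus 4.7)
The plan is to specialize Proposition \ref{proppir} to $r=2$, and then to push the resulting bounds down to the stated thresholds by a finite, prime-by-prime enumeration.

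For \eqref{pi2maj} and \eqref{pi2min} I would take $\al=1$ and $x_1=89\,967\,803$, so that \eqref{dusart3} provides $|\theta(x)-x|\le x/\log^3 x$ for $x\ge x_1$. Substituting $r=2$ into \eqref{Majpir} yields the coefficient $(51\cdot 16+176\cdot 8+222\cdot 4+120\cdot 2+23+168)/(24\cdot 81)=1181/648$; and since $r=2>r_0(1)=1.1445\ldots$, the relevant lower bound is \eqref{Minpir2}, whose coefficient simplifies to $(51\cdot 16+176\cdot 8+222\cdot 4+120\cdot 2+23-168)/1944=1069/648$. It then remains to evaluate numerically the constants $C_0$ of \eqref{C0maj} and $\widehat{C_0}$ of \eqref{C0min} at $x_1$, and to check that $C_0\le 0$ and $\widehat{C_0}\ge 0$. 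This settles \eqref{pi2maj} and \eqref{pi2min} for all $x\ge x_1$.

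To extend these inequalities down to $x\ge 60\,173$ (resp.\ $x\ge 1\,091\,239$), I would exploit that $\pi_2$ is constant on each interval $[p,p^+)$ between consecutive primes, while both bounding functions are continuous and, for $x$ in this range, strictly increasing (see \eqref{fab}). For the upper bound \eqref{pi2maj} it then suffices to verify the inequality at each prime $p$ in the range---the tightest instant, just after $\pi_2$ jumps by $p^2$; for the lower bound \eqref{pi2min} it suffices to check at every prime $q$ that $\pi_2(p)\ge \mathrm{RHS}(q)$, where $p$ is the predecessor of $q$---the tightest instant, just before the next jump. A single loop over primes up to $x_1$, maintaining $\pi_2$ incrementally, completes the verification.

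The streamlined bounds \eqref{pi2majred} and \eqref{pi2minred} are then deduced from \eqref{pi2maj} and \eqref{pi2min}: dividing through by $x^3/\log^2 x$, each reduces to an elementary inequality of the form $\frac{2}{27\log x}\pm\frac{C}{648\log^2 x}\le \mathrm{const}$, solvable as a quadratic in $1/\log x$ and valid once $\log x$ exceeds an explicit threshold; in the narrow remaining range (down to $60\,297$ and $32\,321$ respectively) the same prime-by-prime enumeration closes the gap. Note that \eqref{pi2minred} is stated for $\pi_2^-$, which differs from $\pi_2$ exactly at primes by $\pi_2^-(p)=\pi_2(p)-p^2$; this is the quantity to test against the right-hand side at $x=p$. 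The only genuine obstacle is computational: an effective enumeration over primes up to roughly $9\cdot 10^7$, entirely routine with the C$^{++}$ toolkit already used throughout the paper.
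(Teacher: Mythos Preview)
Your proposal is correct and follows essentially the same route as the paper: specialize Proposition~\ref{proppir} with $r=2$, $\al=1$, $x_1=89\,967\,803$, verify the signs of $C_0$ and $\widehat{C_0}$, then bridge the gap to the stated thresholds by a prime-by-prime check using monotonicity of the bounding functions; the reduced bounds \eqref{pi2majred} and \eqref{pi2minred} are derived from \eqref{pi2maj} and \eqref{pi2min} by an elementary comparison in $1/\log x$, followed by the same enumeration.

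One small imprecision: your appeal to \eqref{fab} does not directly justify that the lower-bound function $F(t)=\frac{t^3}{3\log t}+\frac{t^3}{9\log^2 t}+\frac{2t^3}{27\log^3 t}-\frac{1069\,t^3}{648\log^4 t}$ is increasing, because of the negative last term. The paper handles this by computing $F'(t)=\frac{t^2}{648\log^5 t}(648\log^4 t-3351\log t+4276)$ and observing the parenthesis is positive for $t>1$; you should do the same (or note that the positive terms dominate) before invoking the ``tightest instant just before the next jump'' argument for \eqref{pi2min}.
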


\begin{proof}
From \eqref{dusart3}, the hypothesis $|\theta(x)-x| \le \al x/\log^3 x$
is satisfied with $\al=1$ and $x_1=89\ 967\ 803$. By computation, we  find
$\pi_2(x_1) = 13\ 501\ 147\ 086\ 873\ 627 \\ \ 946\ 348$, 
$\theta(x_1)=89\ 953\ 175.416\ 013\ 726\ldots$
and $C_0$, defined by \eqref{C0maj} with $r=2$  and $\al=1$ is equal to
$-1.040\ldots\times 10^{18} < 0$ so that \eqref{pi2maj} follows 
from \eqref{Majpir} for $x\ge x_1$.
From \eqref{fab}, the right-hand side of \eqref{pi2maj} is increasing on $x$
for $x \ge e^{4/3}=3.79\ldots$ We check that \eqref{pi2maj} holds 
when $x$ runs over the primes $p$ satisfying $60209\le p \le x_1$
but not for $p=60169$. For $x=60172.903\ldots$, the right-hand side
of \eqref{pi2maj} is equal to $\pi_2(60169)$, which completes the proof
of \eqref{pi2maj}.

Let us set $x_2=315\ 011$. From \eqref{pi2maj}, for $x\ge x_2$, we have
\begin{multline*}
  \pi_2(x) \le  \frac{x^{3}}{3\log x}\left(1+\frac{1}{\log x}
\left(\frac{1}{3}+\frac{2}{9\log x_2}+
  \frac{1181}{216\log^2 x_2}\right)\right)
\\ \le  \frac{x^{3}}{3\log x}\left(1+\frac{0.385}{\log x}\right)
\end{multline*}
which proves \eqref{pi2majred} for $x\ge x_2$.
Further, we check that
\eqref{pi2majred} holds when $x$ runs over the primes $p$ such that
$60317\le p \le x_2$ but does not hold for $p=60293$.  Solving the
equation $\pi_2(60293)=t^3/(3\log t)(1+0.385/\log t)$ yields
$t=60296.565\ldots$ which completes the
proof of \eqref{pi2majred}.

Similarly, $\widehat{C_0}$ defined by \eqref{C0min} is equal to
$8.022\ldots \times 10^{18} >0$ which implies \eqref{pi2min} from
\eqref{Minpir2} for $x\ge x_1$. Let us define
$F(t)=\frac{t^{3}}{3\log t}+\frac{t^{3}}{9\log^2 t}+
\frac{2t^{3}}{27\log^3 t}-\frac{1069t^{3}}{648 \log^4 t}$.
We have $F'(t)=\frac{t^2}{648\log^5 t}(648\log^4 t-3351\log t+4276)$
which is positive for $t > 1$ and thus, $F(t)$ is increasing for $t >
1$. For all primes $p$ satisfying $1\,091\,239 \le p \le x_1$, we
denote by $p^+$ the prime following $p$ and we check that $\pi_2(p)
\ge F(p^+)$, which proves \eqref{pi2min}.

Let us set $f(t)=\frac{t^3}{3\log t} (1+\frac{0.248}{\log
    t})$, so that $F(t)-f(t)=\frac{t^3}{81000\log^4
  t}(2304\log^2t\\ +6000 \log  t-133625)$.
The largest root of the trinomial on $\log t$  is $6.424\ldots$ so that 
$F(t) > f(t)$ holds for $t\ge 618 > \exp(6.425)$, which,
as \eqref{pi2min} holds for $x\ge 1\ 091\ 239$,
proves \eqref{pi2minred} for $x\ge 1\,091\,239$.

After that, we check that $\pi_2(p) \ge f(p^+)$ for
all pairs $(p,p^+)$ of consecutive primes satisfying
$32321 \le p<p^+ \le 1\ 091\ 239$, which,  for $x\ge 32321$, proves that 
$\pi_2(x) \ge f(x)$ and \eqref{pi2minred} if $x$ is not prime. 
For $x$ prime and $x > 32321$, we have $x-1> 32321$ and
we consider $y$ satisfying $x-1<y<x$. We have $\pi_2^-(x)=\pi_2(y)\ge
f(y)$, which proves \eqref{pi2minred} when $y$ tends to $x$.
\end{proof}

\begin{coro}\label{coropi345}
We have
\begin{equation}
  \label{majpi3}
\pi_3(x) \le 0.271\;\frac{x^4}{\log x}\quad \text{ for } \;\; x\ge 664,
\end{equation}
\begin{equation}
  \label{majpi4}
\pi_4(x) \le 0.237\;\frac{x^5}{\log x}\quad \text{ for } \;\; x\ge 200,
\end{equation}
\begin{equation}
  \label{majpi5}
\pi_5(x) \le 0.226\;\frac{x^6}{\log x}\quad \text{ for } \;\; x\ge 44
\end{equation}
%%and
\begin{equation}
  \label{majpi6}
  \pi_r(x) \le \frac{\log 3}{3}\left(1+\left(\frac 23\right)^r\right)\frac{x^{r+1}}
  {\log x}\quad \text{ for  $ \;\; x > 1$ and $r\ge 5$}.
\end{equation}
\end{coro}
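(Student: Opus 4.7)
The corollary splits into two parts. For \eqref{majpi3}--\eqref{majpi5} with $r\in\{3,4,5\}$ fixed, I would apply Proposition~\ref{proppir} with $\alpha=1$ to get an explicit upper bound of the form
\[
\pi_r(x) \le C_0 + \frac{x^{r+1}}{(r+1)\log x}\left(1 + \frac{1}{(r+1)\log x} + \frac{2}{(r+1)^2\log^2 x} + \frac{A_r}{24(r+1)^3\log^3 x}\right)
\]
valid for $x\ge x_1 = 89\,967\,803$. After dividing by $x^{r+1}/\log x$, the main contribution is $1/(r+1)$, which is strictly less than the claimed constants $0.271$, $0.237$, $0.226$ respectively, so that a sufficiently large threshold $X_0$ makes the inequality hold on $[X_0,\infty)$. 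Exploiting the monotonicity of $x\mapsto x^{r+1}/\log x$ (via \eqref{fab}), I would then extend the bound down to $x\ge 664$, $200$, $44$ by checking every pair of consecutive primes in the interval, exactly as in Corollaries~\ref{coropi1} and \ref{coropi2}.

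For \eqref{majpi6} I would study the function $\phi_r(x) := \pi_r(x)\log x/x^{r+1}$ for $x>1$ and $r\ge 5$. Since $\pi_r$ is piecewise constant and $\log x/x^{r+1}$ is strictly decreasing for $x > e^{1/(r+1)}$, which is less than $e^{1/6}<1.2$ as soon as $r\ge 5$, the function $\phi_r$ is strictly decreasing on every interval between consecutive primes. Hence $\sup_{x>1}\phi_r(x)$ is attained at a prime, and it suffices to show $\phi_r(p)\le C_r := (\log 3/3)\bigl(1+(2/3)^r\bigr)$ for every prime~$p$. The cases $p=2$ and $p=3$ are immediate: $\phi_r(2)=(\log 2)/2 < (\log 3)/3 \le C_r$, while $\phi_r(3)=(2^r+3^r)(\log 3)/3^{r+1} = C_r$, with equality --- and this equality is precisely what pins down the form of $C_r$.

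For $p\ge 5$, I would split the inequality $\pi_r(p)\log p\le C_r\,p^{r+1}$ into the two pieces
\[
2^r\log p \le \frac{2^r\log 3}{3^{r+1}}\,p^{r+1} \qquad\text{and}\qquad (\pi_r(p)-2^r)\log p \le \frac{\log 3}{3}\,p^{r+1},
\]
whose sum gives the desired bound since $(\log 3)/3 + (2^r\log 3)/3^{r+1} = C_r$. The first reduces to $\log p/p^{r+1}\le (\log 3)/3^{r+1}$, which holds for $p\ge 3$ because $t\mapsto \log t/t^{r+1}$ is decreasing on $[e^{1/(r+1)},\infty)$ (see \eqref{fab}) and $e^{1/(r+1)}<e<3$. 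The second, equivalent to $\sum_{3\le q\le p,\,q\text{ prime}} q^r \le (\log 3/3)\,p^{r+1}/\log p$, is the main obstacle of the corollary: for $p$ beyond an explicit threshold it follows from the asymptotic bound in Proposition~\ref{proppir}, whose leading coefficient $1/(r+1)\le 1/6$ is comfortably smaller than $\log 3/3$ for $r\ge 5$; for the finitely many remaining small primes $5\le p\le X_0$ it would be verified by a direct numerical computation, while keeping uniformity in $r$ by noting that the ratio $(\pi_r(p)-2^r)/p^r = \sum_{3\le q<p}(q/p)^r+1$ is monotone decreasing in $r$ and can therefore be checked at $r=5$.
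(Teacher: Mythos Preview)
Your treatment of \eqref{majpi3}--\eqref{majpi5} matches the paper's: apply Proposition~\ref{proppir} with $\alpha=1$, $x_1=89\,967\,803$ (the paper checks that the constant $C_0(r)$ of \eqref{C0maj} is negative for $r=3,4,5$, so it can be dropped) to cover $x\ge x_1$, and then extend down to the stated thresholds by inspecting consecutive primes, exactly as in Corollaries~\ref{coropi1} and~\ref{coropi2}.

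For \eqref{majpi6} your argument is correct but the paper takes a shorter path. Instead of splitting $\pi_r(p)=2^r+(\pi_r(p)-2^r)$ and handling the two pieces separately, the paper uses the trivial inequality
\[
\pi_r(x)=\sum_{p\le x}p^r\le x^{r-5}\sum_{p\le x}p^5=x^{r-5}\,\pi_5(x)\qquad(r\ge 5),
\]
which is exactly your monotonicity-in-$r$ observation applied to the full sum rather than to $\pi_r-2^r$. Combined with the fact (checked numerically at $p=5,7,\dots,43$ and via \eqref{majpi5} for larger $x$) that $\pi_5(x)\log x/x^6\le 0.350\ldots<(\log 3)/3$ for all $x\ge 5$, this immediately yields $\pi_r(x)<\frac{\log 3}{3}\,x^{r+1}/\log x$ for $x\ge 5$; the direct evaluations $\phi_r(2)=(\log 2)/2$ and $\phi_r(3)=C_r$ then finish the proof for $1<x<5$. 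The payoff of the paper's route is that no appeal to Proposition~\ref{proppir} for general $r$ is needed and no $r$-dependent threshold $X_0$ ever arises; your decomposition recovers the sharp constant $C_r$ by tracking the $2^r$ contribution explicitly, at the cost of an extra splitting step.
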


\begin{proof}
First, from \eqref{C0maj}, 
with $r\in \{3,4,5\}$, $\al=1$ and $x_1=89\,967\,803$, we calculate 
\[C_0(3)=-1.165\ldots 10^{26},\quad C_0(4)=-1.171\ldots 10^{34},  
\quad C_0(5)=-1.123\ldots 10^{42}.\]
As these three numbers are negative, from \eqref{Majpir}, 
for $r\in \{3,4,5\}$ and $x \ge x_1$, we have
\begin{multline}\label{pir<}
\pi_r(x) \le 
\frac{x^{r+1}}{(r+1)\log x}\Big(1+\frac{1}{(r+1)\log  x_1}+
\frac{2}{(r+1)^2\log^2\! x_1} \\
+ \frac{51r^4+176r^3+222r^2+120r+191}{24(r+1)^3\log^3x_1}\Big)
\end{multline}
and
\[\pi_3(x) \le 0.254\;\frac{x^4}{\log x},\quad
\pi_4(x) \le 0.203\;\frac{x^5}{\log x}\qtx{ and }
\pi_5(x) \le 0.169\;\frac{x^6}{\log x}.\]
If $p$ and $p^+$ are two consecutive primes, for $r\ge 3$, it follows
from \eqref{fab} that the function 
$t\mapsto \frac{\pi_r(t)\log t}{t^{r+1}}$ is decreasing on $t$ for
$p\le t < p^+$. Therefore, to 
complete the proof of \eqref{majpi3}, one checks that
$\frac{\pi_3(p)\log p}{p^4}\le 0.271$ holds for $673\le p \le x_1$
and we solve the equation $\pi_3(t)\log
t=0.271\;t^4$ on the interval $[661,673)$ whose root is
$663.35\ldots$
The proof of \eqref{majpi4} and \eqref{majpi5} are similar.

Since the mapping $t\mapsto (\log t)/t^6$ is decreasing for $t\ge 2$,
the maximum of $\pi_5(x)(\log x)/x^6$ is attained on a prime $p$. In
view of \eqref{majpi5}, calculating $\pi_5(p)(\log p)/p^6$ 
for $p=5,7,\ldots,43$ shows that the maximum for $x\ge 5$ is attained
for $x=5$ and is equal to $0.350\ldots < (\log 3)/3$.
Let $r$ be a number $\ge 5$. By applying the trivial inequality
\[\pi_r(x)\le x^{r-5}\pi_5(x),\quad r\ge 5,\]
we deduce that
 $\pi_r(x) < (\log 3)\,x^{r+1}/(3\log x)$ for  $x\ge 5,$
 and, by calculating $\pi_r(2)(\log 2)/2^{r+1}=(\log 2)/2$ and 
$\pi_r(3)(\log 3)/3^{r+1}=\left(1+\left(\frac 23\right)^r\right)\frac{\log 3}{3}$, 
we obtain \eqref{majpi6}.
\end{proof}

%%%%%%%%%%%%%%%%%%%%%%%%%%%%%%%%%%%%%%%%%%%%%%%%%%%%%%%%%%
\section{$\ell$-superchampion numbers}\label{parSupCh}

\subsection{Definition of $\ell$-superchampion numbers}\label{pardefSupCh}

\begin{definition}\label{defSupCh}
An integer $N$ is said $\ell$-superchampion (or more simply
superchampion) if there exists $\rho >0$ such that, for all integer $M \ge 1$
\begin{equation}
  \label{SupCh}
  \ell(M)-\rho \log M \ge \ell(N)-\rho \log N.
\end{equation}
When this is the case, we say that $N$ is a $\ell$-superchampion {\em
  associated} to $\rho$.
\end{definition}

Geometrically, if we represent $\log M$ in abscissa and $\ell(M)$ in
ordinate for all $M\ge 1$, the vertices of the convex envelop of all
these points represent the $\ell$-superchampion numbers
(cf. \cite[Fig. 1, p. 633]{DNZ}).
If $N$ is an $\ell$-superchampion, the following property holds
(cf. \cite[Lemma 3]{DNZ}):
\begin{equation}\label{NgN}
N=g(\ell(N)).
\end{equation}

Similar numbers, the so-called {\it superior highly composite numbers} were 
first introduced by S. Ramanujan (cf. \cite{Ram}). The $\ell$-superchampion
numbers were also used in
\cite{TheseJLN,CRASJLN,Mas,MNRAA,MNRMC,MPE,DN}.
Let us recall the properties we will need.
For more details, %complete description of the $\ell$-superchampion
% numbers,
cf. \cite[Section 4]{DNZ}. 

%Let us recall \cite[Lemma 5]{DNZ}, replacing $x_j$ by $\xi_j$.

\begin{lem}
  Let $\rho$ satisfy $\rho \ge 5/\log 5 = 3.11\ldots$. Then,
  depending on $\rho$, there exists an unique decreasing sequence $ (\xi_j)_{j \ge 1}$ such that
$\xi_1 > \exp(1)$ and, for all $j \ge 2$, 
\begin{equation}\label{xik}
  \xi_j > 1 \qtx{ and }\frac{\xi_j^j - \xi_j^{j-1}}{\log \xi_j} = \frac{\xi_1}{\log \xi_1} = \rho.
\end{equation}
We have also $\xi = \xi_1 \ge 5$ and $\xi_2 \ge 2$.
 \end{lem}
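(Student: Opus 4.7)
The plan is to treat $j=1$ and $j\ge 2$ separately, in each case reducing the defining equation to the strict monotonicity of an auxiliary function, and then verifying the bounds $\xi_1\ge 5$, $\xi_2\ge 2$ and the strict decrease. For $j=1$, I would work with $f_1(x)=x/\log x$ on $(e,\infty)$: its derivative $(\log x-1)/(\log x)^2$ is positive there, so $f_1$ is a strictly increasing bijection $(e,\infty)\to(e,\infty)$. The hypothesis $\rho\ge 5/\log 5>e$ (numerically $3.106\ldots>2.718\ldots$) then produces a unique $\xi_1>e$ with $\xi_1/\log\xi_1=\rho$, and since $\rho\ge f_1(5)$, monotonicity forces $\xi_1\ge 5$.

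For $j\ge 2$, set $f_j(x)=(x^j-x^{j-1})/\log x$ on $(1,\infty)$. From $(x-1)/\log x\to 1$ as $x\to 1^+$ one gets $f_j(x)\to 1$, and clearly $f_j(x)\to\infty$ as $x\to\infty$. The main step, and the one that needs a bit of care, is to show $f_j$ strictly increases on $(1,\infty)$. Differentiating yields
\[
f_j'(x)=\frac{x^{j-2}\,g_j(x)}{(\log x)^2},\qquad g_j(x)=(jx-(j-1))\log x-(x-1),
\]
and $g_j(1)=0$ together with $g_j'(x)=j\log x+(j-1)(1-1/x)>0$ for $x>1$ implies $g_j>0$ on $(1,\infty)$. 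Hence $f_j$ is a strictly increasing bijection $(1,\infty)\to(1,\infty)$, and since $\rho\ge 5/\log 5>1$ there is a unique $\xi_j>1$ solving $f_j(\xi_j)=\rho$.

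For the bound $\xi_2\ge 2$, use $f_2(2)=2/\log 2<5/\log 5\le\rho$, so monotonicity of $f_2$ gives $\xi_2\ge 2$. Finally, to see that the sequence is strictly decreasing, observe that for $j\ge 2$
\[
f_{j+1}(\xi_j)=\xi_j\,f_j(\xi_j)=\xi_j\,\rho>\rho,
\]
since $\xi_j>1$; and for $j=1$, $f_2(\xi_1)=(\xi_1-1)\,\xi_1/\log\xi_1=(\xi_1-1)\rho>\rho$ because $\xi_1\ge 5$. In either case, strict monotonicity of $f_{j+1}$ forces $\xi_{j+1}<\xi_j$, which closes the argument. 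No step beyond the elementary inequality $g_j>0$ on $(1,\infty)$ requires more than direct computation, so I expect that monotonicity verification to be the only non-trivial obstacle.
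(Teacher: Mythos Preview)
Your argument is correct. The monotonicity computations are clean: the key identity
\[
f_j'(x)=\frac{x^{j-2}\,g_j(x)}{(\log x)^2},\qquad g_j(x)=(jx-(j-1))\log x-(x-1),
\]
together with $g_j(1)=0$ and $g_j'(x)=j\log x+(j-1)(1-1/x)>0$ for $x>1$, is exactly what is needed, and the comparison $f_{j+1}(\xi_j)=\xi_j\rho>\rho$ (with the analogous step at $j=1$) gives the strict decrease immediately. One very small remark: for $\xi_2\ge 2$ you use $f_2(2)=2/\log 2<5/\log 5$; since the inequality is strict you actually get $\xi_2>2$, which is slightly more than the statement asks.

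As for comparison with the paper: the paper does not prove this lemma at all. It is stated as a recalled fact, with the sentence ``Let us recall the properties we will need. For more details, cf.\ \cite[Section 4]{DNZ}'' preceding it, and no proof is given in the present text. Your write-up therefore supplies a self-contained elementary proof where the paper simply defers to the earlier reference \cite{DNZ}.
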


\suppress{For every $\rho > 0$ there exitst at least one superchampion
associated to $\rho$. For some values of $\rho$ there are more
than one superchampion. These values of $\rho$ are the
elements of the set $\ce$ defined below.
}

\begin{definition}
For each prime $p\in \cp$, let us define the sets
\begin{equation}\label{Ep}
\ce_p =\left\{\frac{p}{\log p},\frac{p^2-p}{\log p}\,,\ldots,
\frac{p^{i+1}-p^i}{\log p}\,,\ldots \right\},
\quad
\ce= \bigcup_{p\in \cp} \ce_p.
\end{equation}
\end{definition}

\begin{rem}
Note that all the elements of $\ce_p$ are distinct at the exception, for $p=2$,
of $\dfrac{2}{\log 2}=\dfrac{2^2-2}{\log 2}$ and that, for 
$p \ne q$, $\ce_p\cap \ce_q=\emptyset$ holds.

Furthermore if $\rho \in \ce_p$, there is an unique $\jh=\jh(\rho) \ge 1$ such that
   $\xi_{\jh}$ is an integer; this integer is $p$ and
   $\jh$ is given by
   \begin{equation}\label{jhdef}
     \jh =
     \begin{cases}
       1 &\textit{ if }  \rho = p/\log p\\
       j  &\textit{ if }  \rho = (p^j-p^{j-1})/\log p \quad ( j \ge 2).
     \end{cases}
   \end{equation}
 \end{rem}
 
\begin{prop}\label{propNrho}
  Le $\rho > 5 / \log 5$, $\xi_j = \xi_j(\rho)$ defined by
  \eqref{xik} and $\Nro$, $\Nrop$  defined by
  \begin{equation}\label{Nrho}
    \Nro = \prod_{j\ge 1}\big(\prod_{p < \xi_j} p\big)
    =\prod_{j \ge 1} \big(\prod_{\xi_{j+1} \le p < \xi_j} p^j\big)
  \end{equation}
  and
 \begin{equation}\label{Nrhop}
   \Nrop = \prod_{j\ge 1}\big(\prod_{p \le\xi_j} p\big)
   =\prod_{j \ge 1} \big(\prod_{\xi_{j+1} <  p \le  \xi_j} p^j \big)
 \end{equation}
 Then,
 \begin{enumerate}
 \item
   If $\rho \notin \ce$, $\Nro = \Nrop$ is the unique superchampion
   associated to $\rho$.
 \item
   If $\rho \in \ce_p$, $\Nro$ and $\Nrop$ are two consecutive
   superchampions, they are the only superchampions
   associated to $\rho$,
   and
   \begin{equation}\label{majNrop}
     \Nrop = p \Nro = \xi_{\jh}(\rho) \Nro.
    \end{equation}
  \end{enumerate}
  %where $\jh$ is given buy \eqref{jhdef}.
\end{prop}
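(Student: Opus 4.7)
The plan is to reduce the minimization of $\ell(M) - \rho \log M$ to independent minimizations over each prime. Both $\ell$ and $\log$ are additive on prime powers, so writing $M = \prod_p p^{v_p(M)}$ gives
$$\ell(M) - \rho \log M = \sum_{p \in \cp} \phi_p(v_p(M)),$$
where $\phi_p(0) = 0$ and $\phi_p(\al) = p^\al - \rho \al \log p$ for $\al \ge 1$. Hence $N$ is a superchampion associated to $\rho$ iff, for every prime $p$, the exponent $v_p(N)$ is a minimizer of $\phi_p$ on $\N$. The proposition will follow by identifying these minimizers explicitly in terms of the $\xi_j$'s.

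Next I study the discrete minimum of $\phi_p$. For each prime $p$, the increment $\phi_p(j) - \phi_p(j-1) = p^j - p^{j-1} - \rho \log p$ is non-positive iff $(p^j - p^{j-1})/\log p \le \rho$. Factoring the left side as $t^{j-1}(t-1)/\log t$, one checks that it is strictly increasing in $t$ on $(1, \iy)$ for $j \ge 2$ and on $(e, \iy)$ for $j = 1$; since the hypothesis $\rho > 5/\log 5 > e$ forces $\xi_1 \ge 5 > e$, one deduces $\phi_p(j) \le \phi_p(j-1) \Llr p \le \xi_j$, with equality iff $p = \xi_j$. The sequence $(\xi_j)$ being strictly decreasing, and $\xi_j \to 1$ (from $\xi_j^{j-1}(\xi_j-1)/\log\xi_j = \rho$ together with $(\xi_j-1)/\log\xi_j \ge 1$, whence $\xi_j \le \rho^{1/(j-1)}$), the optimum is attained and only finitely many primes contribute a positive exponent. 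In particular, the optimal $v_p$ is the largest $j$ with $p \le \xi_j$, unique unless $p = \xi_j$ for some (necessarily unique) $j$, in which case both $j-1$ and $j$ are optimal.

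The last step is the case analysis. If $\rho \notin \ce$, no $\xi_j$ equals a prime, so the conditions $p < \xi_j$ and $p \le \xi_j$ agree for every $p$; each $\phi_p$ has a unique minimizer and the definitions \eqref{Nrho}--\eqref{Nrhop} yield $\Nro = \Nrop$ as the only superchampion, proving (1). If $\rho \in \ce_p$, the disjointness of the sets $\ce_q$ and the strict monotonicity of $(\xi_j)$ force the coincidence $p = \xi_{\jh}$ to be the only one, with $\jh$ as in \eqref{jhdef}; for this prime $p$ the minimum of $\phi_p$ is attained at both $\jh - 1$ and $\jh$, while every other prime has a unique optimal exponent. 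The two resulting superchampions are precisely $\Nro$ (with $v_p = \jh - 1$) and $\Nrop$ (with $v_p = \jh$), giving \eqref{majNrop} since $\Nrop / \Nro = p = \xi_{\jh}(\rho)$. Consecutiveness follows from a small perturbation argument: for $\rho'$ slightly above $\rho$ the unique associated superchampion coincides with $\Nrop$, and slightly below it coincides with $\Nro$, so no other superchampion can lie strictly between them.

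The main obstacle is the non-monotonicity of $t \mapsto t/\log t$ on $(1, e)$, which could in principle spoil the characterization at $j = 1$ for small primes; it is neutralized by the hypothesis $\rho > 5/\log 5$, which keeps $\xi_1 \ge 5$ comfortably above $e$ and, as a bonus, rules out the degenerate coincidence $2/\log 2 = (2^2-2)/\log 2$ noted in the remark preceding the proposition, ensuring that $\jh$ is unambiguously defined by \eqref{jhdef}. The rest of the argument is routine bookkeeping with the definitions \eqref{Nrho} and \eqref{Nrhop}.
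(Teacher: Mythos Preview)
The paper does not include a proof of this proposition; it refers the reader to \cite[Section 4]{DNZ}. Your argument follows the standard route (prime-by-prime minimization of $\ell(M)-\rho\log M$, in the spirit of Ramanujan's superior highly composite numbers) and is essentially correct.

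There is one notational slip worth flagging. You write the increment as $\phi_p(j)-\phi_p(j-1)=p^j-p^{j-1}-\rho\log p$, but this is only valid for $j\ge 2$: since you set $\phi_p(0)=0$ (not $p^0=1$), the increment at $j=1$ is $\phi_p(1)-\phi_p(0)=p-\rho\log p$. The condition for this to be non-positive is $p/\log p\le\rho$, which (using the monotonicity of $t\mapsto t/\log t$ on $(e,\iy)$ together with $\xi_1\ge 5$, and checking $2/\log 2,\,3/\log 3<5/\log 5<\rho$ for the primes below $e$) is equivalent to $p\le\xi_1$, exactly as needed. Your later remarks (``on $(e,\iy)$ for $j=1$'' and ``the non-monotonicity of $t\mapsto t/\log t$'') show you had the correct quantity $t/\log t$ in mind; the factored form $t^{j-1}(t-1)/\log t$ simply does not apply at $j=1$ (indeed $(t-1)/\log t$ is increasing on all of $(1,\iy)$, so the restriction to $(e,\iy)$ would be unmotivated for it). With this correction the argument goes through unchanged.
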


From \eqref{xik} we deduce that the upper bound for $j$
in  \eqref{Nrho} and \eqref{Nrhop} 
is $\lfloor J\rfloor$ with $J$ defined by
\begin{equation}
  \label{J}
\frac{2^J-2^{J-1}}{\log 2}=\rho=\frac{\xi}{\log \xi} \qtx{i.e.}
J=\frac{\log \xi+\log(2\log 2) -\log \log \xi}{\log 2} 
< \frac{\log \xi}{\log 2}, 
\end{equation}
as $\xi \ge 5$ is assumed.

  \begin{definition}\label{nsuperch}
    Let us suppose $n\ge 7$.
    Depending on $n$, we define $\rho$, $N'$, $N''$, $n'$, $n''$, $\xi$,
  and $(\xi_j)_{j \ge 1}$.
  \begin{enumerate}
  \item
    $\rho$ is the unique element of $\ce$ such that
    \begin{equation}\label{defrhon}
      \ell(N_{\rho})  \le n < \ell(N_{\rho}^+).
    \end{equation}
  \item
      $N', N'', n',n''$ are defined by
      \begin{equation}\label{defN'n'}
        N' = \Nro, \quad N''=\Nrop
        \quad n' = \ell(N'),\text{ and } n''=\ell(N''). 
      \end{equation}
    \item
      For $j \ge 1$,  $\xi_j$ is defined by \eqref{xik}
      and $\xi$ is defined by $\xi = \xi_1$ i.e. $\log\xi/\xi = \rho.$
    \end{enumerate}    
\end{definition}

\begin{prop}
 Let us suppose $n \ge 7$, $\rho$, $N'$, $N''$, $n'$, $n''$ and $\xi $
 defined by Definiton \ref{nsuperch}. Then
  \begin{equation}\label{N'n}
      n' \le n <  n''   \qtx{and}  N' =  g(n') \le g(n)  < N'' = g(n'') = p N'. 
    \end{equation}
    \begin{equation}\label{ellN'N''}
  \ell(N')-\rho \log N'=\ell(N'')-\rho \log N''.
\end{equation}
    \begin{equation}\label{N''xiN'}
      N'' \le \xi N'.
    \end{equation}
    \begin{equation}\label{ellN'N''xi}
      \ell(N'')-\ell(N')  \le \xi.
    \end{equation}
\end{prop}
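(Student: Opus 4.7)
The plan is to unpack the definitions and observe that each of the four statements follows either directly from Proposition~\ref{propNrho} or from the defining superchampion relation~\eqref{SupCh}.

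First I would prove \eqref{N'n}. The inequalities $n'\le n< n''$ are immediate from \eqref{defrhon} and \eqref{defN'n'}. Since $N'=\Nro$ and $N''=\Nrop$ are both superchampions by Proposition~\ref{propNrho}, the identity \eqref{NgN} gives $g(n')=N'$ and $g(n'')=N''$; because $g$ is non-decreasing, $n'\le n< n''$ forces $N'=g(n')\le g(n)\le g(n'')=N''$. To rule out equality on the right, I would observe that $\ell(N'')=n''>n$, while every $M$ competing in \eqref{g} for $g(n)$ must satisfy $\ell(M)\le n$ by \eqref{ellh}; hence $g(n)\ne N''$, making the upper inequality strict. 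The last piece $N''=pN'$ is precisely \eqref{majNrop}.

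For \eqref{ellN'N''}, since both $N'$ and $N''$ are superchampions associated to the same $\rho$, applying \eqref{SupCh} first with $N=N'$ and $M=N''$, and then with $N=N''$ and $M=N'$, yields two reverse inequalities, hence equality. Next, \eqref{N''xiN'} follows by combining $N''=pN'$ with $p=\xi_{\jh}(\rho)$ from \eqref{majNrop} and the fact that the sequence $(\xi_j)_{j\ge 1}$ is decreasing, so $\xi_{\jh}\le \xi_1=\xi$. Finally, \eqref{ellN'N''xi} is obtained by rewriting \eqref{ellN'N''} as
\[
\ell(N'')-\ell(N')=\rho\bigl(\log N''-\log N'\bigr)=\rho\log p=\rho\log\xi_{\jh}\le \rho\log\xi=\xi,
\]
where the last equality uses the defining relation $\xi/\log\xi=\rho$ from Definition~\ref{nsuperch}(3).

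No step looks delicate; the only subtlety is ensuring the strict inequality $g(n)<N''$, which I would handle by the observation above that $n<n''=\ell(N'')$ makes $N''$ ineligible as a competitor in the maximization \eqref{g} defining $g(n)$. Everything else is a direct translation of Proposition~\ref{propNrho} and the definition of superchampion.
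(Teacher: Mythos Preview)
Your proof is correct and follows essentially the same route as the paper's: the same references \eqref{defrhon}, \eqref{defN'n'}, \eqref{majNrop}, \eqref{SupCh}, and the decreasingness of $(\xi_j)$ are invoked in the same places, and your derivation of \eqref{ellN'N''xi} via $\rho\log\xi_{\jh}\le\rho\log\xi=\xi$ is exactly the paper's computation. You actually supply a bit more detail than the paper does for the strict inequality $g(n)<N''$ in \eqref{N'n}, which the paper leaves implicit.
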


\begin{proof}
  \begin{itemize}
  \item
    \eqref{N'n} results of \eqref{defrhon}, \eqref{defN'n'} and \eqref{majNrop}.

  \item
    By applying \eqref{SupCh} first to $M=N'$ , $N=N''$ and further to
    $M=N''$, $N=N'$ we get \eqref{ellN'N''}.

  \item
    Equality \eqref{majNrop} gives $N'' = \xi_{\jh(\rho)} N'$;  with the
    decreasingness of $(\xi_j)$ and the definiton $\xi = \xi_1$ we
    get \eqref{N''xiN'}.
  \item
    By using \eqref{ellN'N''} and \eqref{N''xiN'}  we have
    $\ell(N'')-\ell(N')=\rho \log (N"/N')) \le \rho \log \xi=\xi$.
  \end{itemize}
\end{proof}

In the array of Fig. \eqref{arraygn}, for a small $n$, one can read the value of
$N',N'',\rho$ and $\xi$ as  given in Definition \ref{nsuperch}. For
instance, for $n=45$, we have $N'=60\,060$, $N''=180\,180$,
$\rho=6/\log 3$ and $\xi=14.667\ldots$ We also can see the values of
the parameter associated to a superchampion number $N$. For instance,
$N= 360\,360$ is associated to all values of $\rho$ satisfying $4/\log
2 \le \rho \le 17/\log 17$.

\begin{figure}[!ht]
    \[
  \begin{array}{|c|c|c|c|c|}
%%  \everymath{\scriptstyle}
\hline
n&N&\ell(N)&\rho&\xi\\
\hline
  &12 = 2^2\cdot 3&7&  &  \\
 7 .. 11 &  &  & 5/\log 5=3.11& 5\\
   &60 = 2^2\cdot 3\cdot 5&12&  &  \\
 12  ..   18 &  &  & 7/\log 7=3.60& 7\\
   &420 = 2^2\cdot 3\cdot 5\cdot7&19&  &  \\
 19 ..  29&  &  & 11/\log 11=4.59& 11\\
   &4\,620 = 2^2\cdot 3\cdot 5\cdot 7 \cdot 11&30&  &  \\
30 ..  42&  &  & 13/\log 13=5.07& 13\\
   &60\,060 = 2^2\cdot 3\ldots 13&43&  &  \\
 43 ..  48&  &  & (9-3)/\log 3=5.46&14.66\\
    &180\,180 = 2^2\cdot 3^2\cdot 5\ldots 13&49&  &  \\
 49 ..  52&  &  & (8-4)/\log 2=5.77&16\\
    &360\,360 = 2^3\cdot 3^2\cdot 5\ldots 13&53&  &  \\
 53 ..  69&  &  & 17/\log 17=6.00&17\\
    &6\,126\,120 = 2^3\cdot 3^2\cdot 5\ldots 17&70&  &  \\
 70..88&  &  & 19/\log 19=6.45&19\\
    &116\,396\,280 = 2^3\cdot 3^2\cdot 5\ldots 19&89&  &  \\
\hline
  \end{array}
\]
\caption{The first  superchampion numbers}
\label{arraygn}
\end{figure}

As another example, let us consider $x_0=10^{10}+19$, the smallest prime
exceeding $10^{10}$, and the two $\ell$-superchampion numbers 
$N'_0$ and $N''_0$ associated to $\rho =x_0/\log x_0 \in \ce_{x_0}$. We have
\begin{multline}
   \label{N'0}
N'_0= 2^{29}3^{18}5^{12}7^{10}11^813^817^719^723^6\ldots
31^637^5\ldots71^573^4\ldots 211^4 223^3\ldots 1459^3\\ 
1471^2\ldots 69557^2 69593\ldots
    9\ 999\ 999\ 967 \qtx{and} N''_0=x_0\,N'_0,  
\end{multline}
\begin{equation}
  \label{n0}
  \nu_0=\ell(N'_0)
=2\ 220\ 832\ 950\ 051\ 364\ 840=2.22\ldots 10^{18}, \qquad J=29.165\ldots
\end{equation}
and, for $2\le j \le 29$, $\xi_j=x_j^{(0)}$ with
\begin{multline}\label{xj0}
x_2^{(0)}=69588.8\ldots, x_3^{(0)}=1468.8\ldots, 
x_4^{(0)}=220.2\ldots, x_5^{(0)}=71.5\ldots \\
\text{ and }\;   x_{29}^{(0)}=2.0\ldots 
\end{multline}
A complete table of values of $x_j^{(0)}$ is given in \cite{web}.

Let $n$ be an integer, and $\xi = \xi(n)$ (Definition \ref{nsuperch}). Let us suppose $n \ge
  \nu_0 = \ell(N_0')$; then, by \eqref{defrhon}, $\rho \ge \rho_0$, ie
  $\xi / \log \xi  \ge x_0 / \log x_0$. So that
  \begin{equation}\label{ngreatern0}
    \xi \ge x_0.
  \end{equation}

\subsection{Estimates of $\xi_j$ defined by \eqref{xik}}\label{parxij}

\begin{lem}\label{lemxik}
(i) For $\xi\ge 5$ and $j\ge 2$, we have
\begin{equation}
  \label{xk1sk}
\xi_j \le \xi^{1/j}. 
\end{equation}
(ii) For $2\le j\le 8$ and $\xi \ge \la_j$, we have
\begin{equation}
  \label{xsk1sk}
\xi_j \le \left(\frac{\xi}{j}\right)^{1/j}, 
\end{equation}
with $\la_2=80$, $\la_3=586$, $\la_4=6381$, $\la_5=89017$, 
$\la_6=1\,499\,750$, $\la_7=29\,511\,244$, $\la_8=663\,184\,075$.

(iii) For $\xi \ge 5$ and $j$ such that $\xi_j\ge x_j^{(0)} \ge2$
(where $x_j^{(0)}$ is defined in \eqref{xj0}), we have
\begin{equation}
  \label{xkksk}
\xi_j \le \left(\frac{\xi}{j(1-1/x_j^{(0)})}\right)^{1/j} \le 
 \left(\frac{2\xi}{j}\right)^{1/j}.
\end{equation}
% (iv) For $\xi \ge (j+1)^{j+1}$ and $j\ge 2$, we have
% \begin{equation}
%   \label{xikplus1}
% \xi_j \ge \left(\frac{\xi}{j+1}\right)^{1/j}.
% \end{equation}
\end{lem}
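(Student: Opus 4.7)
All three items rest on the same device: \eqref{xik} says that $\xi_j$ is the relevant solution of $f(x) = \rho$ for the auxiliary function $f(x) = (x^j - x^{j-1})/\log x = x^{j-1}(x-1)/\log x$. I would first verify that $f$ is strictly increasing on $(1, +\infty)$: the sign of $f'(x)$ is governed by $\phi(x) = (jx - (j-1))\log x - (x - 1)$, which satisfies $\phi(1) = \phi'(1) = 0$ and $\phi''(x) = j/x + (j-1)/x^2 > 0$, hence $\phi(x) > 0$ on $(1, \infty)$. With monotonicity in hand, every target inequality ``$\xi_j \le y$'' reduces to ``$f(y) \ge \rho = \xi/\log\xi$''. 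For (i), with $y_0 = \xi^{1/j}$, a direct substitution reduces the desired bound $f(y_0) \ge \xi/\log\xi$ to $\xi \ge (j/(j-1))^j$. Since $j \mapsto (j/(j-1))^j$ is decreasing in $j \ge 2$ (the derivative of its logarithm is $\log(1 + 1/(j-1)) - 1/(j-1) < 0$), its maximum over $j \ge 2$ equals $4$, so the hypothesis $\xi \ge 5$ is exactly what is needed.

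For (ii), I take $y_1 = (\xi/j)^{1/j}$ and, after the analogous reduction, $f(y_1) \ge \xi/\log\xi$ becomes the compact inequality
\[
\frac{\xi}{(\log\xi)^j} \;\ge\; \frac{j}{(\log j)^j}.
\]
By \eqref{fab}, $\xi \mapsto \xi/(\log\xi)^j$ is increasing on $(e^j, +\infty)$, so the threshold $\lambda_j$ is defined as the unique root $\ge e^j$ of $\xi/(\log\xi)^j = j/(\log j)^j$; the stated numerical values for $j = 2, \ldots, 8$ are then certified by direct computation. The principal technical burden of the lemma sits here, since each $\lambda_j$ must be located as a root and verified to lie on the increasing side of the curve, and the root position becomes quite sensitive as $j$ grows (which is why the range is capped at $j = 8$).

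For (iii), rather than invoking $f$, I work with \eqref{xik} directly by rewriting it as
\[
\xi_j^j \;=\; \frac{\xi}{\log\xi} \cdot \frac{\log\xi_j}{1 - 1/\xi_j}.
\]
The hypothesis $\xi_j \ge x_j^{(0)} \ge 2$ yields $1/(1 - 1/\xi_j) \le 1/(1 - 1/x_j^{(0)}) \le 2$, and part (i) provides $\log\xi_j \le (\log\xi)/j$. Substituting both estimates into the displayed equality produces the first inequality of \eqref{xkksk}, and the weaker form $(2\xi/j)^{1/j}$ follows from $1/(1 - 1/x_j^{(0)}) \le 2$. Thus (iii) is an immediate consequence of (i) combined with the defining identity and requires no further monotonicity argument.
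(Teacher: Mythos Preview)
Your proof is correct and follows essentially the same route as the paper's: monotonicity of $t\mapsto (t^j-t^{j-1})/\log t$ reduces (i) to $\xi\ge (j/(j-1))^j$ and (ii) to $\xi^{1/j}/\log\xi\ge j^{1/j}/\log j$ (your $j$th-power form is equivalent), while (iii) is obtained by rewriting \eqref{xik} as $\xi_j^j=(\xi/\log\xi)\log\xi_j/(1-1/\xi_j)$ and feeding in (i) together with $\xi_j\ge x_j^{(0)}$. The only cosmetic difference is that the paper treats the $\lambda_j$ as given constants at which the inequality in (ii) is checked, rather than as exact roots, and simply asserts the monotonicity that you take the trouble to justify via $\phi''>0$.
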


\begin{proof}
(i) As the function $t\mapsto \frac{t^j-t^{j-1}}{\log t}$ is increasing, it suffices to show that
\[\frac{(\xi^{1/j})^j-(\xi^{1/j})^{j-1}}{\log(\xi^{1/j})}=
\frac{\xi-\xi^{(j-1)/j}}{(1/j)\log \xi} \ge \frac{\xi}{\log \xi}\]
which is equivalent to
\begin{equation}\label{xj1}
\xi \ge \left(1+\frac{1}{j-1}\right)^j.
\end{equation}
But the sequence $(1+1/(j-1))^j$ decreases from 4 to $\exp(1)$ when
$j$ increases from $2$ to $\iy$, and $\xi \ge 5$ is assumed, which
implies \eqref{xj1} and \eqref{xk1sk}.

(ii)  Here, we have to prove
\[\frac{\xi/j-(\xi/j)^{(j-1)/j}}{(1/j)\log (\xi/j)} \ge \frac{\xi}{\log \xi}\]
which is equivalent to
\begin{equation}
  \label{xij}
\frac{\xi^{1/j}}{\log \xi} \ge\frac{j^{1/j}}{\log j}. 
\end{equation}
For $2\le j\le 8$, we have $\la_j > e^j$ and, from \eqref{fab}, the function 
$\xi\mapsto \xi^{1/j}/\log \xi$ is increasing for $\xi\ge \la_j$
and its value for $\xi=\la_j$ exceeds $j^{1/j}/\log j$ so that
inequality \eqref{xij} is satisfied.

(iii) Let us suppose that $\xi\ge 5$ and  $\xi_j\ge x_j^{(0)}\ge 2$
hold. From \eqref{xik} and \eqref{xk1sk}, we have 
\[\xi_j^j=\frac{\xi \log \xi_j}{\log \xi (1-1/\xi_j)}\le 
\frac{\xi}{j (1-1/\xi_j)}\le \frac{\xi}{j(1-1/x_j^{(0)})}\le \frac{2\xi}{j}\]
which proves \eqref{xkksk}.
\end{proof}

\begin{coro}\label{coropjxi}
For $n\ge 7$,  $\rho = \rho(n)$, $\xi$=$\xi(n)$ defined in Definition
\ref{nsuperch}, the powers $p^j$ of primes dividing $N'=\Nro$ or $N''=
\Nrop$ in \eqref{Nrho} or \eqref{Nrhop} do not exceed $\xi$.
\end{coro}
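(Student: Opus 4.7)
The plan is to unwind the definitions of $\Nro$ and $\Nrop$ and then invoke Lemma~\ref{lemxik}(i). A prime power $p^j$ appears in the factorization \eqref{Nrho} of $N'$ exactly when $\xi_{j+1}\le p<\xi_j$, and appears in the factorization \eqref{Nrhop} of $N''$ exactly when $\xi_{j+1}<p\le \xi_j$. In both cases the relevant inequality is $p\le \xi_j$, so it suffices to establish $\xi_j^{\,j}\le \xi$ for every $j\ge 1$ with $\xi_j\ge 2$.

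For $j=1$ this is immediate, since $\xi_1=\xi$ by the definition of $\xi$ in Definition~\ref{nsuperch}. For $j\ge 2$, Lemma~\ref{lemxik}(i) gives $\xi_j\le \xi^{1/j}$ (using only $\xi\ge 5$, which holds by hypothesis $n\ge 7$ together with Definition~\ref{nsuperch}), hence $\xi_j^{\,j}\le \xi$. Combining the two cases, any prime power $p^j$ dividing $N'$ or $N''$ satisfies $p^j\le \xi_j^{\,j}\le \xi$, which is the desired conclusion.

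There is no real obstacle: the corollary is essentially a repackaging of Lemma~\ref{lemxik}(i) via the product representations \eqref{Nrho} and \eqref{Nrhop}. The only mild care needed is the separation of the trivial case $j=1$ (where $\xi_1=\xi$ makes the bound an equality for $p=\xi$) from the case $j\ge 2$ where the lemma is actually invoked.
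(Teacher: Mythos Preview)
Your proof is correct and follows exactly the paper's approach: the paper's own argument is the one-line remark that the corollary follows from \eqref{xk1sk}, \eqref{Nrho} and \eqref{Nrhop}, and you have simply unpacked this, separating the trivial case $j=1$ from the case $j\ge 2$ handled by Lemma~\ref{lemxik}(i).
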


\begin{proof}
  This follows from  \eqref{xk1sk}, \eqref{Nrho} and \eqref{Nrhop}.
\end{proof}

\subsection{Convexity}\label{parcon}
In this paragraph we prove Lemma \ref{lemanzn} which is used
to speed-up the computations done in 
Section \ref{parproof174} to prove Inequality \eqref{maxgn}
of Theorem \ref{thmhgMm},
and   and in Section \ref{parproofgn4}
to prove Theorem \ref{thmgnHR} (iv).

\begin{lem}\label{lemPhicon}
The function $t\mapsto \sqrt{\limm(t)}$ is concave for 
$t  > \li(e^2)=4.954\ldots$.

Let $a\le 1$ be a real number. Then, for $t \ge 31$, the function 
$t\mapsto \sqrt{\limm(t)}-a(t(\log t))^{1/4}$ is concave.  
\end{lem}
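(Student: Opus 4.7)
My plan is as follows. For part (i), I set $x = \limm(t)$, so that $t = \li(x)$ and $dx/dt = 1/\li'(x) = \log x$. The chain rule then gives $\frac{d}{dt}\sqrt{x} = \frac{\log x}{2\sqrt{x}}$, and differentiating once more,
\[
\frac{d^2}{dt^2}\sqrt{\limm(t)} \;=\; \frac{d}{dx}\!\left(\frac{\log x}{2\sqrt{x}}\right)\log x \;=\; \frac{(2 - \log x)\log x}{4\,x^{3/2}}.
\]
This quantity is strictly negative exactly when $\log x > 2$, that is when $x > e^2$, i.e. when $t > \li(e^2)$. This proves (i).

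For part (ii), let $f(t) = (t\log t)^{1/4}$. Logarithmic differentiation gives $f'(t)/f(t) = (\log t + 1)/(4t\log t)$, and a short computation yields
\[
f''(t) \;=\; -\,\frac{(3\log^2 t + 2\log t + 3)\,f(t)}{16\,t^2\log^2 t},
\]
so $f$ is strictly concave on $(1,\infty)$. Write $\psi(t) = \sqrt{\limm(t)} - a f(t)$ and $\phi(t) = \sqrt{\limm(t)}$. If $a \le 0$ then $-a f''(t) \ge 0$, so $\psi''(t) = \phi''(t) - a f''(t) < 0$ follows at once from part (i). If $0 < a \le 1$, then $a|f''(t)| \le |f''(t)|$; since $\phi''$ and $f''$ are both negative on $[31,\infty)$, it is enough to prove
\[
|\phi''(t)| \;\ge\; |f''(t)|, \qquad t \ge 31.
\]

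Substituting the explicit expressions, this last inequality becomes
\[
\frac{(\log x - 2)\log x}{4\,x^{3/2}} \;\ge\; \frac{(3\log^2 t + 2\log t + 3)\,(t\log t)^{1/4}}{16\,t^2\log^2 t},
\]
with $x = \limm(t)$. Using $t = \li(x) \sim x/\log x$, the ratio of left to right side is asymptotic to $\tfrac{4}{3}\,x^{1/4}$, hence tends to $+\infty$. To make this effective down to $t = 31$ I would convert the inequality into a statement in $x$ alone by means of the elementary bounds $x/\log x < \li(x) < x/(\log x - 1)$ (valid for $x$ not too small), and then either establish monotonicity of the resulting ratio in $x$ or exhibit an explicit decreasing majorant of the right-hand side divided by the left-hand side.

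The main obstacle is this final verification uniformly for $t \ge 31$: the asymptotic dominance of $|\phi''|$ over $|f''|$ is clear, but bridging the asymptotic regime and the boundary $t = 31$ requires careful estimates on $\limm(t)$ for small $t$. A clean way is to check the inequality numerically at $t = 31$ (where $x = \limm(31) \approx 155$ gives a ratio well above $2$) and then argue monotonicity of $|\phi''(t)|/|f''(t)|$ on $[31,\infty)$, which reduces to a single-variable calculus computation in the substituted variable $x$.
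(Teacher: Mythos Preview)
Your part (i) is correct and the computation is clean.

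In part (ii) there is a small sign slip in the case $a\le 0$: since $-a\ge 0$ and $f''(t)<0$, one has $-af''(t)\le 0$, not $\ge 0$. As you wrote it, the implication fails (a negative term plus a nonnegative term need not be negative). With the correct sign, the conclusion $\psi''=\phi''+(-af'')<0$ is immediate, so this is harmless once fixed.

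The genuine gap is in the case $0<a\le 1$. You correctly reduce to the inequality $|\phi''(t)|\ge |f''(t)|$ for $t\ge 31$, observe that the ratio grows like $\tfrac{4}{3}(t\log t)^{1/4}$, and verify the endpoint $t=31$ numerically. But a single numerical value together with asymptotic dominance does not cover $[31,\infty)$; you still owe either a monotonicity argument for the ratio or an explicit lower bound valid for all $t\ge 31$. You acknowledge this yourself as ``the main obstacle'' and only sketch a plan. Until that step is carried out --- for instance by substituting $t=\li(x)$, using two-sided bounds such as $x/\log x<\li(x)<x/(\log x-1)$ to eliminate $t$, and then proving the resulting one-variable inequality in $x$ for $x\ge \limm(31)$ --- the proof is incomplete.

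The paper itself does not give a proof of this lemma; it cites \cite[Lemma~2.5]{DNh3} and calls it ``a good exercise of calculus''. Your outline is a reasonable way to do that exercise, but the final verification must actually be done.
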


\begin{proof}
The proof is a good exercise of calculus: cf. \cite[Lemma 2.5]{DNh3}.
\end{proof}

\begin{lem}\label{lemconz}
Let $u$ be a real number, $0 \le u \le e$. The function $\Phi_u$
defined by
\begin{equation}\label{PHIu}
\Phi_u(t) =\sqrt{t\log t}\left(1+\frac{\log\log t-1}{2 \log t}-
u\frac{(\log \log t)^2}{\log^2 t}\right)
\end{equation}
is increasing and concave for $t\ge e^e=15.15\ldots$  
\end{lem}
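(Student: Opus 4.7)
I would start by observing that $\Phi_u$ depends affinely on the parameter $u$, which allows writing
\[\Phi_u(t) = \Bigl(1-\frac{u}{e}\Bigr)\Phi_0(t) + \frac{u}{e}\,\Phi_e(t), \qquad 0 \le u \le e.\]
Since $u/e \in [0,1]$, this expresses $\Phi_u$ as a convex combination of $\Phi_0$ and $\Phi_e$ with nonnegative coefficients. Both being increasing and being concave are preserved under such convex combinations, so it suffices to establish the lemma for the two extreme values $u = 0$ and $u = e$.

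Next I would adopt the notation $L = \log t$, $\lambda = \log L$, and write $\Phi_u(t) = \sqrt{tL}\,G_u(t)$ with
\[G_u(t) = 1 + \frac{\lambda-1}{2L} - \frac{u\lambda^2}{L^2}.\]
From $dL/dt = 1/t$ and $d\lambda/dt = 1/(tL)$, differentiation yields
\[G_u'(t) = \frac{2-\lambda}{2tL^2} + \frac{2u\lambda(\lambda-1)}{tL^3}.\]
On $t \ge e^e$ one has $L \ge e$ and $\lambda \ge 1$; moreover, $L \mapsto \lambda/L$ has derivative $(1-\lambda)/L^2 \le 0$, so it is decreasing and bounded above by its value $1/e$ at $L=e$. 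Hence $u\lambda^2/L^2 \le e(1/e)^2 = 1/e$, so $G_u(t) \ge 1 - 1/e > 0$ on the whole range. To obtain the monotonicity of $\Phi_u$ I would verify positivity of
\[\frac{\Phi_u'(t)}{\Phi_u(t)} = \frac{L+1}{2tL} + \frac{G_u'(t)}{G_u(t)}\]
by bounding the second summand from the explicit formula for $G_u'$: the two terms there are $O(\lambda/(tL^2))$ and $O(\lambda^2/(tL^3))$ respectively, both far smaller in absolute value than the leading $1/(2t)$ contributed by the first summand on $[e^e, \infty)$.

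For concavity I would change variable to $s = \log t$ and work with $\psi(s) = \Phi_u(e^s)$. The identity $t^2\Phi_u''(t) = \psi''(s) - \psi'(s)$ shows that concavity of $\Phi_u$ in $t$ is equivalent to $\psi''(s) \le \psi'(s)$. Setting $V(s) = \psi'(s)/\psi(s) = \tfrac12 + \tfrac{1}{2s} + tG_u'(t)/G_u(t)$ (with $t=e^s$), a short computation shows that the desired inequality rearranges to
\[V(s)^2 \le V(s) - V'(s).\]
Asymptotically $V(s) \to 1/2$ and $V'(s) \to 0$, so the leading comparison is $V^2 \approx 1/4$ against $V - V' \approx 1/2$, leaving a gap of roughly $1/4$ to absorb all lower-order corrections. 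The main obstacle is the bookkeeping needed to verify this inequality uniformly on $[e,\infty)$, and especially near the boundary $s=e$. The harder case is $u = e$, where the correction $-e\lambda^2/L^2$ in $G_u$ contributes visibly to $V$ and $V'$; the saving bounds are $u(\lambda/L)^2 \le 1/e$ together with analogous control on the derivatives of $\lambda^2/L^2$, which keeps $V$ sufficiently close to $1/2$ on the full range to preserve the gap $V - V^2 - V' \ge 0$.
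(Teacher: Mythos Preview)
Your approach differs substantially from the paper's. The paper computes $\Phi_u'$ and $\Phi_u''$ directly as explicit rational expressions in $L=\log t$ and $\lambda=\log L$, then groups the numerator of each into a handful of terms, every one of which is shown to have the right sign using only $L\ge e$, $\lambda\ge 1$, and $L=e^{\lambda}>e\lambda\ge u\lambda$ (so for instance $L^2>e^2\lambda^2\ge 2u\lambda^2$, and $L^4+11L\ge(e^3+11)L>22u\lambda$, while the trinomial $15\lambda^2-21\lambda+8$ has negative discriminant). This handles all $u\in[0,e]$ simultaneously, with no reduction to endpoints and no change of variable.

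Your convex-combination reduction to $u\in\{0,e\}$ is a pleasant observation but, given how cleanly the paper's grouping works for general $u$, it does not actually save labour here. For monotonicity your sketch is essentially sound, though ``far smaller'' is overstated: near $s=e$ the correction $G_u'/G_u$ for $u=e$ is of the same order as the main term $(L+1)/(2tL)$; what saves you is that it is \emph{nonnegative} there (since $\lambda\le 2$ makes $(2-\lambda)/(2tL^2)\ge 0$), and once $\lambda>2$ the extra factor $1/L$ does make it genuinely smaller. For concavity, your reformulation $V^2\le V-V'$ via $\psi(s)=\Phi_u(e^s)$ is correct, and a spot check at $s=e$ (where $V\approx 0.79$ for $u=e$) confirms the inequality holds with some margin; but you have not actually carried out the verification over $[e,\infty)$, and the margin at the left endpoint is not so large that this becomes a one-line estimate. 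The paper's direct term-by-term sign analysis of $\Phi_u''$ closes this with no further work; completing your route would require either a monotonicity argument for $V-V'-V^2$ or an explicit case split that ends up comparable in length to the direct computation.
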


\begin{proof}
Let us write $L$ for $\log t$ and $\la$ for $\log \log t$. 
One calculates (cf. \cite{web})
\[\frac{d\,\Phi_u}{dt}=\frac{1}{4L^2\sqrt{tL}}\left(2L(L^2-u\la^2)+L(L-\la+3)
+\la(L^2-2u)+6u\la(\la-1)\right)\]
\begin{multline*}
\frac{d^2\Phi_u}{dt^2}=\frac{-1}{8L^2(tL)^{3/2}}(
L^2(L^2-2u\la^2)+L^3(\la-1)+L(2L-3\la)+(L^4+11L-22u\la)\\
+2u(15\la^2-21\la+8)).
\end{multline*}
For $t\ge e^e$, we have $L\ge e$, $\la\ge 1$, $L=e^\la> e\la \ge u\la$, 
so that $\Phi'_u$ is positive. 

In $\Phi''_u$, $L^4+11L\ge (e^3+11)L\ge (e^3+11)u\la>22u\la$,
the trinomial $15\la^2-21\la+8$ is always positive
and the three first terms of the parenthesis are also positive, 
so that $\Phi''_u$ is negative.
\end{proof}

\begin{lem}\label{lemgncon}
Let $n'=\ell(N')$, $n''=\ell(N'')$ where $N'$ and $N''$ are two
consecutive $\ell$-superchampion numbers
associated to the same parameter $\rho$.
Let $\Phi$ be a concave function on
$[n',n'']$ such that $\log N'\le \Phi(n')$ and $\log N'' \le
\Phi(n'')$. 
Then,
\[\text{For } n \in [n', n''],\quad \log g(n) \le \Phi(n).\]
\end{lem}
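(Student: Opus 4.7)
The plan is to bound $\log g(n)$ from above by the affine function interpolating $\log N'$ and $\log N''$, then use concavity of $\Phi$ to bound that affine function by $\Phi$.

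First, I would exploit the defining inequality \eqref{SupCh} for the superchampion $N'$ associated to $\rho$: for every integer $M\ge 1$,
\[
\ell(M)-\rho\log M \;\ge\; \ell(N')-\rho\log N' \;=\; n'-\rho\log N'.
\]
Rearranging gives $\log M \le \log N'+(\ell(M)-n')/\rho$. Applying this with $M=g(n)$, and using $\ell(g(n))\le n$ from \eqref{ellh}, yields
\[
\log g(n) \;\le\; \log N' + \frac{n-n'}{\rho} \;=:\; L(n),\qquad n\in[n',n''].
\]
Here $L$ is affine in $n$, and by \eqref{ellN'N''} (which says $\ell(N'')-\ell(N')=\rho(\log N''-\log N')$) we have $L(n')=\log N'$ and $L(n'')=\log N''$, so $L$ is precisely the chord joining $(n',\log N')$ to $(n'',\log N'')$.

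Next, I would use concavity of $\Phi$ on $[n',n'']$. For any $n=\lambda n'+(1-\lambda)n''$ with $\lambda\in[0,1]$,
\[
\Phi(n) \;\ge\; \lambda\,\Phi(n')+(1-\lambda)\,\Phi(n'') \;\ge\; \lambda\log N'+(1-\lambda)\log N'' \;=\; L(n),
\]
where the first inequality is concavity and the second uses the hypotheses $\Phi(n')\ge\log N'$ and $\Phi(n'')\ge\log N''$. Combining the two chains gives $\log g(n)\le L(n)\le\Phi(n)$, which is the desired conclusion.

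There is no real obstacle here: the argument is the elementary observation that a concave majorant at the endpoints automatically dominates the chord, and the superchampion property \eqref{SupCh} forces $\log g$ to lie below that chord on $[n',n'']$. The only subtlety worth flagging is ensuring that the interpolation formula $L(n'')=\log N''$ is genuinely \eqref{ellN'N''} and not an extra assumption, but this is immediate from the fact that $N'$ and $N''$ share the same parameter $\rho$.
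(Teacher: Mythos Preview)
Your proof is correct and follows essentially the same approach as the paper's own proof. The paper writes $n=\alpha n'+\beta n''$ and derives $\log g(n)\le \alpha\log N'+\beta\log N''$ directly from the superchampion inequality before invoking concavity, whereas you package the same content as the affine chord $L(n)=\log N'+(n-n')/\rho$; the two formulations are equivalent.
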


\begin{proof}
From \eqref{NgN}, it follows that $N'=g(n')$ and $N''=g(n'')$.
Let us set $N=g(n)$ so that, from \eqref{ellh}, we have $n\ge \ell(N)$. 
From the definition \eqref{SupCh} of superchampion numbers and
\eqref{ellN'N''}, we have
\begin{equation}
  \label{nxiN}
n-\rho\log N\ge \ell(N)-\rho\log N\ge n'-\rho\log N'=n''-\rho\log N''.
\end{equation}
Now, we may write
\begin{equation}
  \label{malbe}
  n=\al n'+\beta n''\quad \text{ with }\quad 0\le \al\le 1 
\qtx{and}\beta=1-\al
\end{equation}
and \eqref{nxiN} implies
\begin{multline}
  \label{albexi}
  \log N \le \frac 1\rho(n-(n'-\rho\log N'))
 \\  = \frac 1\rho (\al n' +\beta n''-\al(n'-\rho \log
  N')-\beta(n''-\rho\log n''))\\
=\al \log N'+\beta \log N''.
\end{multline}
From the concavity of $\Phi$,  
$\log N'\le \Phi(n')$ and  $\log N''\le \Phi(n'')$,
\eqref{albexi} and \eqref{malbe} imply
\[\log g(n)=\log N\le \al\ \Phi(n')+\beta\  \Phi(n'')\le 
\Phi(\al n'+\beta n'')=\Phi(n),\]
which completes the proof of Lemma \ref{lemgncon}.
\end{proof}

For $n\ge 2$, let us define $z_n$ by
\begin{equation}\label{zn}
\log g(n)=\sqrt{n\log n}\left(1+\frac{\log \log n-1}{2\log
    n}-z_n\frac{(\log \log n)^2}{\log^2 n}\right).
\end{equation}

\begin{lem}\label{lemanzn}
Let $N'$ and $N''$ be two consecutive $\ell$-superchampion numbers and
$\ell(N') \le n \le \ell(N'')$.

 (i) If $n'\ge 43$ and if $a_{n'}$ and $a_{n''}$ (defined by \eqref{an})
 both belong to $[0,1]$ then
\[a_n\ge \min(a_{n'},a_{n''}).\]

 (ii) If $n'\ge 19$ and if $z_{n'}$ and $z_{n''}$ (defined by \eqref{zn})
 both belong to $[0,e]$ then
\[z_n\ge \min(z_{n'},z_{n''}).\]
\end{lem}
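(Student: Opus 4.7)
The plan is to reduce both assertions to a single application of Lemma \ref{lemgncon} with a well-chosen concave comparison function, so that $\log g(n)$ is automatically bounded above by that function on $[n', n'']$. The guiding observation is that bounds of the form $a_n \ge \alpha$ or $z_n \ge \zeta$ are merely rewritings of inequalities $\log g(n) \le \Phi(n)$, once $\Phi$ is chosen to match the defining identities \eqref{an} and \eqref{zn} respectively.

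For part (i), I set $\alpha = \min(a_{n'}, a_{n''})$, which by hypothesis lies in $[0,1]$, and consider
\[
\Phi(t) = \sqrt{\limm(t)} - \alpha\,(t \log t)^{1/4}.
\]
Since $\alpha \le 1$ and $n' \ge 43 > 31$, Lemma \ref{lemPhicon} gives the concavity of $\Phi$ on $[n', n'']$. Because $N' = g(n')$ by \eqref{NgN}, the definition \eqref{an} rewrites as $\log N' = \sqrt{\limm(n')} - a_{n'}(n' \log n')^{1/4}$; combined with $\alpha \le a_{n'}$, this yields $\log N' \le \Phi(n')$, and likewise $\log N'' \le \Phi(n'')$. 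Lemma \ref{lemgncon} then delivers $\log g(n) \le \Phi(n)$ for every $n \in [n', n'']$, which is exactly the assertion $a_n \ge \alpha$.

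Part (ii) is handled identically, with $\Phi$ replaced by the function $\Phi_u$ of \eqref{PHIu}: set $u = \min(z_{n'}, z_{n''}) \in [0,e]$; since $n' \ge 19 > e^e = 15.15\ldots$, Lemma \ref{lemconz} supplies the concavity of $\Phi_u$ on $[n', n'']$. The definition \eqref{zn}, together with $N' = g(n')$ and $u \le z_{n'}$, gives $\log N' \le \Phi_u(n')$ and analogously $\log N'' \le \Phi_u(n'')$, so Lemma \ref{lemgncon} yields $\log g(n) \le \Phi_u(n)$ on $[n', n'']$, which after division by $\sqrt{n \log n}$ is precisely $z_n \ge u$.

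There is no substantive obstacle: both parts are clean couplings of the three preceding lemmas, and the admissibility thresholds ($n' \ge 43 > 31$ for $\Phi$, and $n' \ge 19 > e^e$ for $\Phi_u$) are encoded in the hypotheses. The only point requiring minor care is that the parameter $\alpha$ (resp.\ $u$) must remain in $[0,1]$ (resp.\ $[0,e]$) for Lemma \ref{lemPhicon} (resp.\ Lemma \ref{lemconz}) to apply, and this too is guaranteed by the assumption that both $a_{n'}, a_{n''} \in [0,1]$ (resp.\ $z_{n'}, z_{n''} \in [0,e]$).
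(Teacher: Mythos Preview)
Your proof is correct and follows essentially the same approach as the paper's own proof: both set the comparison function $\Phi$ (resp.\ $\Phi_u$) with parameter $\min(a_{n'},a_{n''})$ (resp.\ $\min(z_{n'},z_{n''})$), invoke Lemmas \ref{lemPhicon} and \ref{lemconz} for concavity under the stated hypotheses, and conclude via Lemma \ref{lemgncon}. Your write-up is slightly more explicit about why the thresholds $n'\ge 43$ and $n'\ge 19$ and the range constraints on the parameters are needed, but the argument is the same.
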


\begin{proof}
From \eqref{NgN}, it follows that $N'=g(n')$ and $N''=g(n'')$.
Let us set $N=g(n)$ and
\[\Phi(t)=\sqrt{\limm(t)}-\min(a_{n'},a_{n''})(t\log t)^{1/4}.\]
From Lemma \ref{lemPhicon}, $\Phi$ is concave on $[n',n'']$.
Moreover, from the definition \eqref{an} of $a_{n'}$ and $a_{n''}$, we have 
$\log N'\le \Phi(n')$ and  $\log N''\le \Phi(n'')$ 
which, from Lemma \ref{lemgncon}, implies
$\log g(n)\le \Phi(n)$
and, from \eqref{an}, $a_n\ge \min(a_{n'},a_{n''})$ holds,
which proves (i).

The proof of (ii) is similar. We set $u=\min(z_{n'},z_{n''})$. From Lemma
\ref{lemconz}, $\Phi_u$ is concave on $[n',n'']$, $\log g(n')\le \Phi_u(n')$ 
and $\log g(n'')\le \Phi_u(n'')$ so that, from Lemma \ref{lemgncon},
we have $\log g(n) \le \Phi_u(n)$  and, from \eqref{zn}, $z_n\ge u$
holds, which completes the proof of Lemma \ref{lemanzn}.
\end{proof}

\subsection{Estimates of $\xi_2$ defined by \eqref{xik}}\label{parxi2}
By iterating the formula 
$\xi_2=\sqrt{\frac{\xi\log \xi_2}{\log \xi} +\xi_2}$
(cf. \eqref{xik}), for any positive integer $K$, we get
\begin{equation}
  \label{x2asy}
\xi_2=\sqrt{\frac{\xi}{2}}\left(1+\sum_{k=1}^{K-1} \frac{\al_k}{\log ^k \xi}+
\co\left(\frac{1}{\log^K \xi}\right)\right),\quad \xi\to\iy,
\end{equation}
with
\[
  \al_1=-\frac{\log 2}{2},\quad \al_2=-\frac{(\log 2)(\log 2 +4)}{8},
  \quad \al_3=-\frac{(\log 2)(\log^2 2 + 8 \log 2+8)}{16}
\]

\begin{prop}\label{propx2}
  We have the following bounds for $\xi_2$ :
  \begin{align}\label{x2345}
    \xi_2 &<  \sqrt{\frac{\xi}{2}}\left(1-\frac{\log 2}{2\log
    \xi}\right)
    <\sqrt{\frac{\xi}{2}}\left(1-\frac{0.346}{\log \xi}\right)
    &\textrm{ for  } &\xi \ge 31643  \\
 %% \end{equation}
%%and
  %%  \begin{equation}
      \label{x2366}
  \xi_2 &>  \sqrt{\frac{\xi}{2}}\left(1-\frac{0.366}{\log \xi}\right)
  &\textrm{ for } &\xi \ge 4.28\times 10^9.
\end{align}
\end{prop}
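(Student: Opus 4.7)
I would introduce the auxiliary function $F(t) = t^2 - t - \rho\log t$ with $\rho = \xi/\log\xi$, so that by \eqref{xik} $\xi_2$ is a root of $F$. Since $F''(t) = 2 + \rho/t^2 > 0$, $F$ is strictly convex; combined with $F(1) = 0$ and $F'(1) = 1-\rho < 0$, this forces $F$ to have a unique minimum at some $t_\star > 1$ (determined by $2t_\star^2 - t_\star = \rho$, so $t_\star \sim \sqrt{\rho/2}$) and to be strictly increasing on $(t_\star,\infty)$, with $\xi_2$ as the unique zero on that branch. Hence, for any $t \ge t_\star$, we have $t > \xi_2 \iff F(t) > 0$. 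Since the candidates $u = \sqrt{\xi/2}(1-a)$ with $a = (\log 2)/(2\log\xi)$ and $v = \sqrt{\xi/2}(1-b)$ with $b = 0.366/\log\xi$ both satisfy $u,v \sim \sqrt{\xi/2} \gg \sqrt{\rho/2} \sim t_\star$ for $\xi$ in the indicated ranges, it suffices to prove $F(u) > 0$ and $F(v) < 0$.

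For the upper bound, the identity $\rho\log\sqrt{\xi/2} = \xi/2 - (\xi\log 2)/(2\log\xi)$ together with $(\xi/2)(1-a)^2 = \xi/2 - (\xi\log 2)/(2\log\xi) + (\xi(\log 2)^2)/(8\log^2\xi)$ produces a clean cancellation of the leading pieces, leaving
\[
u^2 - \rho\log u \;=\; \frac{\xi(\log 2)^2}{8\log^2\xi} \;+\; \sum_{k\ge 1}\frac{\rho a^k}{k}
\]
(the sum being $-\rho\log(1-a) > 0$). Every term on the right is positive, and the dominant contribution $\rho a = (\xi\log 2)/(2\log^2\xi)$ is of order $\xi/\log^2\xi$, far above $u \sim \sqrt{\xi/2}$. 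Retaining the first few terms of the expansion gives $F(u) = u^2 - u - \rho\log u > 0$ for $\xi$ above some explicit $\xi_0$; for the remaining range $\xi\in[31643,\xi_0]$ one verifies numerically at integer (or prime) values, in the spirit of the proof of Corollary \ref{coropi2}. The second inequality $1-(\log 2)/(2\log\xi) < 1-0.346/\log\xi$ is immediate from $(\log 2)/2 = 0.34657\ldots > 0.346$.

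For the lower bound, the same expansion with $b$ in place of $a$ yields
\[
v^2 - \rho\log v \;=\; -\frac{\xi\,(0.366-(\log 2)/2)}{\log\xi} \;+\; \frac{\xi b^2}{2} \;+\; \sum_{k\ge 1}\frac{\rho b^k}{k},
\]
and since $0.366 > (\log 2)/2$, the leading contribution $-(\xi/\log\xi)(0.366-(\log 2)/2) \approx -0.01943\,\xi/\log\xi$ is \emph{negative} and of order $\xi/\log\xi$, while the positive corrections combine (to the next two orders) into $(0.366+(0.366)^2/2)\,\xi/\log^2\xi = 0.433\,\xi/\log^2\xi + O(\xi/\log^3\xi)$. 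Subtracting $v \sim \sqrt{\xi/2}$, the condition $F(v) < 0$ reduces asymptotically to $0.01943\,\log\xi > 0.433 + O(1/\log\xi)$, i.e.\ roughly $\log\xi \gtrsim 22.3$, consistent with the stated threshold $\xi \ge 4.28\cdot 10^9$ once subleading terms are tracked tightly and a finite range is checked by direct computation. The main technical hurdle in both parts is that the positive and negative contributions very nearly cancel at each threshold, so crude asymptotics lose the target constants $31643$ and $4.28\cdot 10^9$; the proof must combine a truncated expansion carrying several terms of the $\log(1\pm a)$ or $\log(1\pm b)$ series with a pointwise numerical verification on a finite remaining interval.
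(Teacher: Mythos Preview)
Your reduction is essentially the same as the paper's: your function $F(t)=t^2-t-\rho\log t$ satisfies $F(\Phi_a(\xi))=(\xi/\log\xi)\,W_a(\xi)$, where $W_a$ is exactly the auxiliary function the paper introduces, so ``$\xi_2<\Phi_a(\xi)\iff W_a(\xi)>0$'' in the paper is the same equivalence you derive. Your convexity argument for the uniqueness of the root on $(1,\infty)$ is a clean replacement for the paper's monotonicity observation via $t\mapsto (t^2-t)/\log t$.

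Where the executions diverge is in how the sign of $W_a$ (your $F(u)$, $F(v)$) is settled. The paper substitutes $\xi=e^{2t}$, writes $W$ and its first two derivatives explicitly, and builds complete sign tables (tableaux de variation) for $U''$, $U'$, $U$, $W$; the last root of $W$ is then computed numerically and \emph{is} the threshold ($e^{2\cdot 5.181\ldots}=31642.25\ldots$ for $a=(\log 2)/2$, and $e^{2\cdot 11.088\ldots}=4.275\ldots\times 10^9$ for $a=0.366$). Your plan instead expands $F(u)$, $F(v)$ in powers of $1/\log\xi$, reads off the sign for large $\xi$, and proposes a separate numerical sweep over a remaining finite interval. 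This is sound, but be aware that the crude one-term bound does not close at the stated thresholds (e.g.\ at $\xi=31643$ one has $(\xi\log 2)/(2\log^2\xi)\approx 102$ while $u\approx 120$), so you really must carry several terms and then still do the pointwise check; the paper's sign-table method avoids that two-stage structure and delivers the exact cutoffs directly.
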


\begin{proof}
Let us suppose $\xi \ge 4$ and $a\le 0.4$. We set
\[\Phi=\Phi_a(\xi)=
\sqrt{\frac{\xi}{2}}\left(1-\frac{a}{\log \xi}\right) \ge 
\sqrt{\frac{4}{2}}\left(1-\frac{0.4}{\log 4}\right)=
1.006\ldots > 1\]
and
\begin{multline*}
  W=W_a(\xi)  =\frac{(\Phi^2-\Phi)\log \xi}{\xi}-\log \Phi
=\frac{(\log \Phi)(\log \xi)}{\xi}
\left(\frac{\Phi^2-\Phi}{\log \Phi}-\frac{\xi}{\log \xi}\right)\\
=\frac{(\log \Phi)(\log \xi)}{\xi}
\left(\frac{\Phi^2-\Phi}{\log \Phi}-\frac{\xi_2^2-\xi_2}{\log \xi_2}\right).
\end{multline*}
As $\Phi> 1$ and $t\mapsto (t^2-t)/\log t$ is increasing, we have
\begin{equation}
  \label{x2W>0}
W_a(\xi) > 0 \quad \Llr \quad \xi_2 < \Phi(\xi)=
\sqrt{\frac{\xi}{2}}\left(1-\frac{a}{\log \xi}\right).
\end{equation}
By the change of variable
\[\xi=\exp(2t), \quad t=\frac 12\log \xi\ge \log 2,\]
with the help of Maple (cf. \cite{web}), we get
\begin{align*}
 &W =-a+\frac{a^2}{4t}+\frac 32 \log 2+\log t-\log(2t-a)-
e^{-t}\frac{\sqrt 2}{2}(2t-a),\\
&W' =\frac{dW}{dt}=\frac{U}{4t^2(2t-a)} \qtx{with}\\
%\end{align*}
% \\quad \text{ with }\quad
%with
%\begin{align*}
&U   =-2a(a+2)t+a^3+2\sqrt 2 e^{-t} V (4t^4-4(a+1)t^3+a(a+2)t^2),\\
  &U'  =\frac{dU}{dt}\\
  &=-2a(a+2)+2\sqrt 2 e^{-t}(-4t^4+ 4(a+5)t^3-(a^2+14a+12)t^2+2a(a+2)t)\\
& U'' =\frac{d^2U}{dt^2}=2\sqrt 2 e^{-t} V\\
 &      \textrm{ with }    V=4t^4-4(a+9)t^3+(a^2+26a+72)t^2-4(a^2+8a+6)t+2a(a+2).
 \end{align*}       
The sign of $U''$ is the same than the sign of the polynomial $V$ that
is easy to find. For $a$ fixed and $t\ge \log 2$ (i.e. $\xi\ge 4)$,
one can successively determine, the variation and the sign of $U'$,
$U$ and $W$.

\paragraph{For $a=(\log 2)/2=0.346\ldots$} 
\begin{equation*}
 \begin{array}{c|ccccccccccc|}
 t& \log 2& &0.96& &2.39& &4.23& &6.38& &\iy\\
\hline
  U''& &+ & &+&0&-& &-&0&+&  \\
\hline
  & & & & & 16.09& & & & & &-1.62\\
U'&  &\nearrow& 0  &\nearrow& &\searrow& 0  &\searrow&
   & \nearrow & \\
  & -2.79 & & & & & & & &-10.03& & \\
\hline
\end{array}
\end{equation*}
\begin{equation*}
 \begin{array}{c|ccccccccccc|}
 t& \log 2& &0.96& &1.49& &4.23& &8.0& &\iy\\
\hline
  U'& &- &0 &+& &+&0&-& &-&  \\
\hline
  &-1.76& & & & & &29.6 & & & & \\
U&  &\searrow&   &\nearrow&0 &\nearrow&   &\searrow& 0 & \searrow & \\
  &  &  &-2.18 & & & & & & & &-\iy \\
\hline
\end{array}
\end{equation*}
\begin{equation}\label{tab1W}
 \begin{array}{c|ccccccccc|}
 t& \log 2& &1.49& &5.18& &8.0& &\iy\\
\hline
  U \text{ or } W'&  &- &0 &+& & +& 0&-&  \\
\hline
  &-0.036& & & & & &0.021 & & \\
W&  &\searrow&   &\nearrow&0 &\nearrow& &\searrow&  \\
  &  & &-0.27& & & & & &0 \\
\hline
\end{array}
\end{equation}
The root of $W$ is $w_0=5.1811243\ldots$ and $\exp(2w_0)=31642.25\ldots$.
Therefore, \eqref{x2345} follows from array \eqref{tab1W}.
%\bigskip

\paragraph{For $a=0.366$} 
\begin{equation*}
  \begin{array}{c|ccccccccccc|}
\hline
 t& \log 2& &0.97& &2.39& &4.23& &6.39& &\iy\\
\hline
  U''& &+ & &+&0&-& &-&0&+&  \\
\hline
  & & & & & 15.91& & & & & &-1.731\\
U'&  &\nearrow& 0  &\nearrow& &\searrow& 0  &\searrow&
   & \nearrow & \\
  & -2.957 & & & & & & & &-10.09& & \\
\hline
\end{array}
\end{equation*}
\begin{equation*}
  \begin{array}{c|ccccccccccc|}
    \hline
 t& \log 2& &0.9797& &1.5208& &4.231& &7.892& &\iy\\
\hline
  U'& &- &0 &+& &+&0&-& &-&  \\
\hline
  &-1.830& & & & & &29.04 & & & & \\
U&  &\searrow&   &\nearrow&0 &\nearrow&   &\searrow& 0 & \searrow & \\
  &  &  &-2.308 & & & & & & & &-\iy \\
   \hline
\end{array}
\end{equation*}
\begin{equation}\label{tab2W}
  \begin{array}{c|ccccccccccc|}
    \hline 
       t& \log 2& &1.52& &6.37& &7.89& &11.08& &\iy\\
   \hline
   W'& &-&0&+& &+&0&-& &-&\\
\hline
  &-0.025& & & & & &0.004 & &  &  &\\
W&  &\searrow&   &\nearrow&0 &\nearrow& &\searrow&0 &  \searrow & \\
  &  & &-0.28 & & & & & & & &-0.019 \\
   \hline
\end{array}
\end{equation}
The root $w_0$ of $W$ is equal to $11.08803\ldots$ and
$\exp(2w_0)=4.27505\ldots \times 10^9$ so that 
\eqref{x2366} follows from \eqref{x2W>0} and from array \eqref{tab2W}.

\end{proof}
\begin{rem}\label{remxi2}
By solving the system $W=0$, $U=0$ on
the two variables $t$ and $a$, one finds
\[a=a_0=0.370612465\ldots,\qquad t=t_0=7.86682407\ldots \]
and for $a=a_0$, $t=t_0$ is a double root of $W_{a_0}$. By studying
the variation of $W_{a_0}$, we find an array close to \eqref{tab2W},
but $W_{a_0}(t_0)=W'_{a_0}(t_0)=0$, so that $W_{a_0}$
is nonpositive for $t\ge \log 2$, which proves
\[\xi_2 \ge  \sqrt{\frac{\xi}{2}}\left(1-\frac{a_0}{\log \xi}\right)
> \sqrt{\frac{\xi}{2}}\left(1-\frac{0.371}{\log \xi}\right) \qtx{for}
\xi \ge 4.\]
\end{rem}

\begin{coro}\label{coro1slogxi2}
If $\xi \ge x_0=10^{10}+19$ holds and $\xi_2$ is defined by
\eqref{xik}, then
\begin{equation}
  \label{1slogxi2}
  \frac{2}{\log \xi}\le \frac{2}{\log \xi}\left(1+\frac{\log 2}{\log
      \xi}\right) \le \frac{1}{\log \xi_2} \le \frac{2}{\log
    \xi}\left(1+\frac{0.75}{\log \xi}\right)\le \frac{2.07}{\log \xi}.
\end{equation}
\end{coro}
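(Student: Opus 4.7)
The plan is to exploit the two-sided bound on $\xi_2$ given in Proposition \ref{propx2} (valid for $\xi\ge 4.28\times 10^9$, and hence for $\xi\ge x_0=10^{10}+19$), take logarithms, and then invert, controlling the error terms by the standard inequality \eqref{1s1meps} and by $\log(1-x)\ge -x/(1-x)$. Writing $L=\log\xi$, so that $L\ge\log x_0=23.025\ldots$, I would treat the upper and lower bound on $1/\log\xi_2$ separately.

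For the \emph{lower} bound on $1/\log\xi_2$ (equivalently, an upper bound on $\log\xi_2$): from \eqref{x2345},
\[
\log\xi_2<\tfrac12(L-\log 2)+\log\Bigl(1-\tfrac{0.346}{L}\Bigr)<\tfrac12(L-\log 2),
\]
so $1/\log\xi_2>2/(L-\log 2)=(2/L)/(1-(\log 2)/L)\ge (2/L)(1+(\log 2)/L)$, using $1/(1-x)\ge 1+x$. This gives the first two inequalities of \eqref{1slogxi2}.

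For the \emph{upper} bound on $1/\log\xi_2$ (equivalently, a lower bound on $\log\xi_2$): from \eqref{x2366},
\[
\log\xi_2\ge\tfrac12(L-\log 2)+\log\Bigl(1-\tfrac{0.366}{L}\Bigr)\ge\tfrac12(L-\log 2)-\frac{0.366}{L-0.366},
\]
using $\log(1-x)\ge -x/(1-x)$. Since $L\ge\log x_0$, the factor $L/(L-0.366)$ is bounded by $1/(1-0.366/\log x_0)\le 1.017$, so $0.366/(L-0.366)\le 0.372/L$, giving $\log\xi_2\ge (L-\log 2)/2-0.372/L$. Therefore
\[
\frac{1}{\log\xi_2}\le\frac{2/L}{1-(\log 2)/L-0.744/L^{2}}.
\]
Setting $\alpha=(\log 2)/L+0.744/L^{2}$, I bound $\alpha\le\alpha_0:=(\log 2)/\log x_0+0.744/(\log x_0)^{2}<0.0316$ and apply \eqref{1s1meps} to obtain $1/(1-\alpha)\le 1+\alpha/(1-\alpha_0)\le 1+1.033\,\alpha$. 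Expanding,
\[
\frac{1}{\log\xi_2}\le\frac{2}{L}\Bigl(1+\frac{1.033\log 2}{L}+\frac{1.033\cdot 0.744}{L^{2}}\Bigr),
\]
and since $1.033\log 2<0.716$ and the tail $0.769/L^{2}\le 0.034/L$ whenever $L\ge 22.6$ (which holds since $L\ge\log x_0>22.6$), the right-hand side is $\le (2/L)(1+0.75/L)$, proving the third inequality.

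Finally, the last inequality in \eqref{1slogxi2} amounts to $0.75/L\le 0.035$, which holds for $L\ge\log x_0=23.025\ldots>0.75/0.035$. The main (minor) obstacle is purely arithmetic bookkeeping: checking that all the slack constants ($\log 2<0.75$, $0.744$, $1.033$, etc.) fit once $L$ is as large as $\log x_0$, but no delicate analytic input is needed beyond Proposition \ref{propx2}.
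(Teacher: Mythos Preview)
Your proof is correct and follows essentially the same approach as the paper: both use Proposition~\ref{propx2} for the two-sided bound on $\xi_2$, take logarithms, and invert with \eqref{1s1meps}. The only cosmetic differences are that the paper uses the cruder $\xi_2\le\sqrt{\xi/2}$ from \eqref{xsk1sk} (rather than \eqref{x2345}) for the lower bound on $1/\log\xi_2$, and it collapses $\tfrac12\log 2$ and $-\log(1-0.366/\log x_0)$ into the single numerical constant $0.363$ (yielding a correction $0.726/L$) instead of keeping $(\log 2)/L$ and $0.744/L^2$ separate as you do.
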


\begin{proof}
 Since $\xi \ge x_0$ holds and 
$\xi_2$ is increasing on $\xi$, we have 
$\xi_2\ge x_2^{(0)}=69588.859\ldots$ (cf. \eqref{xj0}) 
and \eqref{xsk1sk} implies $\xi_2 \le \sqrt{\xi/2}$, i.e.
\[\log \xi_2 \le \frac 12\log \xi-\frac 12\log 2=
\frac{\log \xi}{2}\left(1-\frac{\log 2}{\log \xi}\right)\]
and
\begin{equation*}
  \frac{1}{\log \xi_2}\ge \frac{2}{(\log \xi)(1-(\log 2)/\log \xi)}
\ge \frac{2}{\log \xi}\left(1+\frac{\log 2}{\log \xi}\right)
\ge \frac{2}{\log \xi}.
\end{equation*}
On the other hand, \eqref{x2366} implies
\begin{eqnarray*}
  \log \xi_2 &\ge& \frac 12\log \xi-\frac 12\log 2+
                   \log\left(1-\frac{0.366}{\log \xi}\right)\\
 &\ge&
\frac 12\log \xi-\frac 12\log 2+\log\left(1-\frac{0.366}{\log x_0}\right)\\
&\ge&  \frac 12\log \xi- 0.363 = 
\frac{\log \xi}{2}\left(1-\frac{0.726}{\log \xi}\right)\ge
\frac{\log \xi}{2}\left(1-\frac{0.726}{\log x_0}\right)\ge \frac{\log \xi}{2.07}
\end{eqnarray*}
and by \eqref{1s1meps},
\begin{eqnarray*}
  \frac{1}{\log \xi_2}&\le& \frac{2}{(\log \xi)(1-0.726/\log \xi)}
\le \frac{2}{\log \xi}
\left(1+\frac{0.726}{(\log \xi)(1-0.726/\log x_0)}\right)\notag\\
&\le& \frac{2}{\log \xi}\left(1+\frac{0.75}{\log \xi}\right)
\le \frac{2}{\log \xi}\left(1+\frac{0.75}{\log x_0}\right)
\le \frac{2.07}{\log \xi},
\end{eqnarray*} 
 which completes the proof of \eqref{1slogxi2}.
\end{proof}

\begin{coro}\label{corothxi2}
If $\xi \ge x_0=10^{10}+19$ holds and $\xi_2$ is defined by
\eqref{xik}, then
\begin{equation}
  \label{thxi2}
 \sqrt{\frac{\xi}{2}}\left(1-\frac{0.521}{\log \xi}\right)
\le \theta^-(\xi_2)=\sum_{p<\xi_2} \log p \le \theta(\xi_2) \le
 \sqrt{\frac{\xi}{2}}\left(1-\frac{0.346}{\log \xi}\right).
\end{equation}
\end{coro}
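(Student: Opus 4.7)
The strategy is to sandwich $\theta(\xi_2)$ (and $\theta^-(\xi_2)$) between the bounds provided for $\theta$ in Lemma \ref{lemtheta} evaluated at $\xi_2$, and then substitute the bounds for $\xi_2$ from Proposition \ref{propx2} together with the estimate of $1/\log \xi_2$ from Corollary \ref{coro1slogxi2}. Since $\xi \ge x_0 = 10^{10}+19$, we have in particular $\xi \ge 31643$, $\xi \ge 4.28\cdot 10^9$, and $\xi_2 \ge x_2^{(0)} = 69588.8\ldots > 48757$, so that all three cited estimates apply.

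\textbf{Upper bound.} From \eqref{eq79} applied at $x=\xi_2$,
\[
\theta(\xi_2) \le \xi_2\Big(1 + \frac{0.000079}{\log \xi_2}\Big).
\]
Bounding $\xi_2$ above by \eqref{x2345}, namely $\xi_2 \le \sqrt{\xi/2}\,(1 - (\log 2)/(2\log\xi))$, and $1/\log\xi_2$ above by \eqref{1slogxi2}, one obtains
\[
\theta(\xi_2) \le \sqrt{\tfrac{\xi}{2}}\Big(1 - \tfrac{(\log 2)/2}{\log\xi}\Big)\Big(1 + \tfrac{0.000079\cdot 2.07}{\log\xi}\Big).
\]
Expanding, the correction term contributes at most $0.000164/\log\xi$, and since $(\log 2)/2 = 0.34657\ldots$, the coefficient of $1/\log\xi$ is at least $0.3464 > 0.346$. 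This yields the right-hand inequality of \eqref{thxi2}.

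\textbf{Lower bound.} From \eqref{eq7461} applied at $x = \xi_2 > 48757$,
\[
\theta^-(\xi_2) \ge \xi_2\Big(1 - \frac{0.0746}{\log \xi_2}\Big).
\]
Bounding $\xi_2$ below by \eqref{x2366}, namely $\xi_2 \ge \sqrt{\xi/2}\,(1 - 0.366/\log\xi)$, and $1/\log\xi_2$ above by $2.07/\log\xi$, one gets
\[
\theta^-(\xi_2) \ge \sqrt{\tfrac{\xi}{2}}\Big(1 - \tfrac{0.366}{\log \xi}\Big)\Big(1 - \tfrac{0.0746\cdot 2.07}{\log\xi}\Big).
\]
Using $(1-a)(1-b) \ge 1-a-b$ for $a,b\ge 0$, the coefficient of $1/\log\xi$ on the right is at most $0.366 + 0.1545 = 0.5205 < 0.521$, which gives the left-hand inequality. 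The inequality $\theta^-(\xi_2) \le \theta(\xi_2)$ is immediate from the definitions.

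The proof involves no genuine obstacle: the only thing to watch is that the numerical slack one loses by upper-bounding $1/\log\xi_2$ by $2.07/\log\xi$ remains absorbed in the gap between $(\log 2)/2$ and $0.346$ on one side, and between $0.366 + 0.0746\cdot 2.07$ and $0.521$ on the other. The hypothesis $\xi \ge x_0$ ensures that all the auxiliary ranges of validity are met and that these slacks are comfortable.
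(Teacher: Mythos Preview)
Your proof is correct and follows exactly the same approach as the paper: combine the $\theta$ bounds from Lemma \ref{lemtheta} with the bounds on $\xi_2$ from Proposition \ref{propx2} and the estimate $1/\log\xi_2 \le 2.07/\log\xi$ from Corollary \ref{coro1slogxi2}, then check the numerical slack. The only cosmetic difference is the order in which you apply the two factors and the rounding ($0.1545$ versus the paper's $0.155$), but the argument is identical.
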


\begin{proof}
We have $\xi_2> x_2^{(0)}> 69588$ and \eqref{eq7461},
\eqref{1slogxi2} and \eqref{x2366} imply
  \begin{eqnarray*}
\theta^-(\xi_2) &\ge & \xi_2\left(1-\frac{0.0746}{\log \xi_2}\right)
\ge \xi_2\left(1-\frac{0.0746\times 2.07}{\log \xi}\right)
\ge \xi_2\left(1-\frac{0.155}{\log \xi}\right)\\
&\ge& \sqrt{\frac{\xi}{2}}\left(1-\frac{0.366}{\log \xi}\right) 
\left(1-\frac{0.155}{\log \xi}\right)\ge
\sqrt{\frac{\xi}{2}}\left(1-\frac{0.521}{\log \xi}\right).
\end{eqnarray*}
Similarly, for the upper bound, we use \eqref{eq79}, \eqref{1slogxi2}
and \eqref{x2345} to get
  \begin{eqnarray*}
\theta(\xi_2) &\le & \xi_2 \left(1+\frac{0.000079}{\log \xi_2}\right)
\le \xi_2\left(1+\frac{0.000079\times 2.07}{\log \xi}\right)\\
&\le& \xi_2\left(1+\frac{0.000164}{\log \xi}\right)
\le \sqrt{\frac{\xi}{2}}\left(1-\frac{\log 2}{2\log \xi}\right) 
      \left(1+\frac{0.000164}{\log \xi}\right)\\
 &\le&
\sqrt{\frac{\xi}{2}}\left(1-\frac{0.346}{\log \xi}\right)
\end{eqnarray*}
which proves the upper bound of \eqref{thxi2}
\end{proof}

\begin{coro}\label{coropi2xi2}
Let $\xi\ge x_0=10^{10}+19$ be a real number and $\xi_2$ be defined by 
\eqref{xik}. Then 
\begin{align}
  \notag
\frac{\xi^{3/2}}{3\sqrt 2\log \xi} \left(1+\frac{0.122}{\log \xi}\right)
&\le \pi_2^-(\xi_2) =\sum_{p < \xi_2} p^2 \le \pi_2(\xi_2)\\
 &\le   \frac{\xi^{3/2}}{3\sqrt 2\log \xi}
 \left(1+\frac{0.458}{\log \xi}\right).
 \label{pi2xi2}
\end{align}
\end{coro}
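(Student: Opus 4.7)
The plan is to bound $\pi_2(\xi_2)$ by combining three ingredients: the effective bounds on $\pi_2(x)$ from Corollary \ref{coropi2}, the bounds on $\xi_2$ from Proposition \ref{propx2}, and the bounds on $1/\log\xi_2$ from Corollary \ref{coro1slogxi2}. First I would verify that all three inputs are applicable. Since $\xi \ge x_0 = 10^{10}+19$ and $\xi_2$ is increasing in $\xi$, one has $\xi_2 \ge x_2^{(0)} = 69588.8\ldots$, which exceeds both $60297$ and $32321$, so \eqref{pi2majred} and \eqref{pi2minred} apply at $x=\xi_2$. Moreover $\xi \ge x_0 > 4.28\times 10^9 > 31643$, so \eqref{x2345} and \eqref{x2366} both hold, and the hypothesis $\xi \ge x_0$ is exactly that of Corollary \ref{coro1slogxi2}.

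For the upper bound I would start from
\[
\pi_2(\xi_2) \le \frac{\xi_2^3}{3\log \xi_2}\Bigl(1+\frac{0.385}{\log\xi_2}\Bigr)
\]
(by \eqref{pi2majred}), then substitute $\xi_2^3 \le (\xi/2)^{3/2}\bigl(1-\log 2/(2\log\xi)\bigr)^3$ from \eqref{x2345}, and $1/\log\xi_2 \le (2/\log\xi)(1+0.75/\log\xi)$ from \eqref{1slogxi2}. This gives an expression of the form
\[
\pi_2(\xi_2) \le \frac{\xi^{3/2}}{3\sqrt{2}\log\xi}\Bigl(1-\tfrac{\log 2}{2\log\xi}\Bigr)^3\Bigl(1+\tfrac{0.75}{\log\xi}\Bigr)\Bigl(1+\tfrac{0.385}{\log\xi_2}\Bigr),
\]
and expanding the product, keeping terms up to order $1/\log^2\xi$ and using \eqref{1s1meps} together with $\log\xi \ge \log x_0$ to absorb the $1/\log^2\xi$ contributions into the leading $1/\log\xi$ term, should yield the constant $0.458$.

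For the lower bound the argument is parallel, starting from \eqref{pi2minred}
\[
\pi_2^-(\xi_2) \ge \frac{\xi_2^3}{3\log \xi_2}\Bigl(1+\frac{0.248}{\log\xi_2}\Bigr),
\]
now using $\xi_2^3 \ge (\xi/2)^{3/2}(1-0.366/\log\xi)^3$ from \eqref{x2366} and the lower bound $1/\log\xi_2 \ge (2/\log\xi)(1+\log 2/\log\xi)$ from \eqref{1slogxi2}. Expanding and simplifying in the same style, one should recover the constant $0.122$.

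The main obstacle is not conceptual — it is bookkeeping. Each factor contributes a correction of size $1/\log\xi$, and one has to compose three of them while being careful about the direction of every inequality (upper versus lower bound of each ingredient, especially $\log\xi_2$ versus $1/\log\xi_2$). The cubing $(1-\cdot)^3$ amplifies the leading $1/\log\xi$ coefficient, and the delicate part is confirming that after all cross terms, the numerical constants $0.122$ and $0.458$ indeed bound the true asymptotic correction (which, by \eqref{x2asy}, is $(2/3-\log 2/2)/\log\xi = 0.320\ldots/\log\xi$) with enough slack for $\log\xi \ge \log x_0 = 23.025\ldots$ to absorb the residual $1/\log^2\xi$ terms.
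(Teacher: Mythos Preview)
Your overall plan is natural, but the direct route—bound $\xi_2^3$ via \eqref{x2345}/\eqref{x2366}, bound $1/\log\xi_2$ via \eqref{1slogxi2}, and multiply—does \emph{not} recover the constants $0.122$ and $0.458$. The problem is error compounding: cubing the bound on $\xi_2$ multiplies the leading $1/\log\xi$ correction by $3$, and then multiplying by the bound on $1/\log\xi_2$ adds another loss. Concretely, for the upper bound your product $(1-\tfrac{\log 2}{2\log\xi})^3(1+\tfrac{0.75}{\log\xi})(1+\tfrac{0.797}{\log\xi})$ has leading coefficient $-\tfrac{3\log 2}{2}+0.75+0.797\approx 0.507$, and absorbing the $1/\log^2\xi$ terms at $\log\xi\ge\log x_0$ only brings this down to about $0.48$, not $0.458$. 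For the lower bound the situation is worse: $(1-\tfrac{0.366}{\log\xi})^3(1+\tfrac{\log 2}{\log\xi})(1+\tfrac{0.496}{\log\xi})$ has leading coefficient $-1.098+0.693+0.496\approx 0.091$, and the negative $1/\log^2\xi$ terms push it still lower—well short of $0.122$.

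The paper avoids this by exploiting the defining relation \eqref{xik} directly: writing
\[
\frac{\xi_2^3}{\log\xi_2}=\xi_2\cdot\frac{\xi_2^2-\xi_2}{\log\xi_2}+\frac{\xi_2^2}{\log\xi_2}
=\xi_2\cdot\frac{\xi}{\log\xi}+\frac{\xi_2^2}{\log\xi_2},
\]
one converts $\xi_2^3/\log\xi_2$ into $\xi_2\cdot\xi/\log\xi$ plus a lower-order remainder. This requires bounding $\xi_2$ only \emph{once} (not cubed) and eliminates the separate $1/\log\xi_2$ factor. The lower bound then reduces to $(1-0.366/\log\xi)(1+0.496/\log\xi)$, giving $0.130-0.008=0.122$; the upper bound similarly yields $(1-0.339/\log\xi)(1+0.797/\log\xi)$ with leading coefficient $0.458$. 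You should insert this identity (equation \eqref{xi2cube} in the paper) as the first step; after that the bookkeeping you describe goes through with the stated constants.
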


\begin{proof}
First, from \eqref{xik}, we observe that
\begin{equation}\label{xi2cube}
\frac{\xi_2^3}{\log \xi_2}=\xi_2\left(\frac{\xi_2^2-\xi_2}{\log \xi_2}\right)+
\frac{\xi_2^2}{\log \xi_2}=\xi_2\frac{\xi}{\log \xi}+\frac{\xi_2^2}{\log \xi_2}
\ge \xi_2 \frac{\xi}{\log \xi},
\end{equation}
whence, from \eqref{pi2minred} and \eqref{1slogxi2}, since $\xi \ge x_0$
and $\xi_2\ge x_0^{(2)}$ are assumed,
\[\pi_2^-(\xi_2) \ge \frac{\xi_2^3}{3\log \xi_2}
\left(1+\frac{0.248}{\log \xi_2}\right) \ge \xi_2\frac{\xi}{3\log
    \xi}\left(1+\frac{0.496}{\log \xi}\right) \]
and, from \eqref{x2366},
\begin{align*}
\pi_2^-(\xi_2)  &\ge \frac{\xi^{3/2}}{3\sqrt 2\log \xi}
     \left(1+\frac{0.496}{\log \xi}\right) \left(1-\frac{0.366}{\log
         \xi}\right)\\
    &= \frac{\xi^{3/2}}{3\sqrt 2\log \xi}
     \left(1+\frac{0.13}{\log \xi}-\frac{0.496\times 0.366}{\log^2 \xi}\right) \\
&\ge  \frac{\xi^{3/2}}{3\sqrt 2\log \xi}
     \left(1+\frac{0.13}{\log \xi}-\frac{0.496\times 0.366}{(\log x_0)\log \xi}\right) 
\ge \frac{\xi^{3/2}}{3\sqrt 2\log \xi}
\left(1+\frac{0.122}{\log \xi}\right),
\end{align*}
which proves the lower bound of \eqref{pi2xi2}.
\medskip

To prove the upper bound, as \eqref{xk1sk} implies $\xi_2 \le
\sqrt{\xi}$ and $\xi_2/\log \xi_2\le 2\sqrt \xi/\log \xi$, 
from \eqref{xi2cube}, we observe that
\begin{multline*}
\frac{\xi_2^3}{\log \xi_2}
=\xi_2\frac{\xi}{\log \xi}+\frac{\xi_2^2}{\log \xi_2}
=\xi_2\frac{\xi}{\log \xi}+\frac{\xi_2^2-\xi_2}{\log \xi_2}
   +\frac{\xi_2}{\log \xi_2}\\
=\frac{\xi}{\log \xi}(\xi_2+1) +\frac{\xi_2}{\log \xi_2}
\le \frac{\xi}{\log \xi}\Big(\xi_2+1+ \frac{2}{\sqrt \xi}\Big)
\end{multline*}
and, from \eqref{x2345} and \eqref{1slogxi2},
 \begin{align*}
 \frac{\xi_2^3}{\log \xi_2} 
&\le \frac{\xi}{\log \xi}\left(1+ \frac{2}{\sqrt \xi}+\sqrt{\frac{\xi}{2}}
  \left(1-\frac{\log 2}{2\log \xi}\right)\right)\\
&=\frac{\xi^{3/2}}{\sqrt 2 \log \xi}\left(1-\frac{1}{\log \xi}
  \left(\frac{\log 2}{2}-\frac{\sqrt 2(1+2/\sqrt \xi)\log^2 \xi}{\sqrt \xi}\right) \right)\\
& \le \frac{\xi^{3/2}}{\sqrt 2 \log \xi}\left(1-\frac{1}{\log \xi}
  \left(\frac{\log 2}{2}
   -\frac{\sqrt 2(1+2/\sqrt x_0)\log^2 x_0}{\sqrt x_0}\right) \right)\\
& \le \frac{\xi^{3/2}}{\sqrt 2 \log \xi}\left(1-\frac{0.339}{\log \xi}\right).
 \end{align*}
Further, from \eqref{1slogxi2}, we have $0.385/\log \xi_2\le
2.07\times 0.385/\log \xi\le 0.797/\log \xi$, whence
from \eqref{pi2majred},
\begin{multline*}\pi_2(\xi_2)\le \frac{\xi_2^3}{3 \log \xi_2}
  \left(1+\frac{0.385}{\log \xi_2}\right)
\le \frac{\xi^{3/2}}{3\sqrt 2 \log \xi}\left(1-\frac{0.339}{\log \xi}\right)
  \left(1+\frac{0.797}{\log \xi}\right)\\
\le \frac{\xi^{3/2}}{3\sqrt 2 \log \xi}\left(1+\frac{0.458}{\log
    \xi}\right),
\end{multline*}
which completes the proof of Corollary \ref{coropi2xi2}.
\end{proof}

\subsection{The additive excess and the multiplicative excess}\label{parexcess}

\subsubsection{The additive excess}\label{paraddexc}

Let $N$ be a positive integer and $Q(N)=\prod_{p\mid N} p$ be the 
squarefree part of $N$. The additive
excess $E(N)$ of $N$ is defined by
\begin{equation}
  \label{E}
  E(N)=\ell(N)-\ell(Q(N))=\ell(N)-\sum_{p\mid N} p=\sum_{p\mid N} (p^{v_p(N)}-p).
\end{equation}
If $N'$ and $N''$ are two consecutive superchampion numbers of common
parameter $\rho=\xi/\log \xi$ (cf. Proposition \ref{propNrho}), 
then, from \eqref{E} and \eqref{Nrho},
\begin{equation}\label{ellN'E}
\ell(N')=\sum_{p\mid N'} p +E(N')= \sum_{p< \xi} p +E(N').
\end{equation}

\begin{prop}\label{propEN}
Let $n$ be an integer satisfying $n\ge \nu_0$ (defined by \eqref{n0})
and $N'$ and $\xi$ defined by Defintion \ref{nsuperch} so that $\xi \ge
x_0=10^{10}+19$ holds, then the additive excess $E(N')$ satisfies
\begin{equation}
  \label{EN}
\frac{\xi^{3/2}}{3\sqrt 2 \log \xi}\left(1
+\frac{0.12}{\log \xi}\right)\le E(N') \le
  \frac{\xi^{3/2}}{3\sqrt 2 \log \xi}\left(1+\frac{0.98}{\log \xi}\right).
\end{equation}
\end{prop}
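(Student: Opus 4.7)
The plan is to isolate the dominant contribution to $E(N')$, which comes from primes $p$ with $\xi_3 \le p < \xi_2$ (for which $v_p(N')=2$), show that it is essentially $\pi_2^-(\xi_2)$, and then control every remaining piece as a genuinely lower-order correction. The standing hypothesis $n \ge \nu_0$ together with \eqref{ngreatern0} gives $\xi \ge x_0$, so every effective estimate of Sections \ref{parUsRes} and \ref{parxi2} is available at $\xi$, $\xi_2$, $\xi_3,\ldots$

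First I would rewrite $E(N')$ via \eqref{E} and the factorisation \eqref{Nrho}. Since $v_p(N')=j$ for $\xi_{j+1}\le p<\xi_j$, one has
\[
E(N')=\sum_{j \ge 2}\sum_{\xi_{j+1} \le p < \xi_j}(p^j - p) = \bigl[\pi_2^-(\xi_2) - \pi_2^-(\xi_3) - \pi_1^-(\xi_2) + \pi_1^-(\xi_3)\bigr] + R,
\]
where $R=\sum_{j \ge 3}\sum_{\xi_{j+1}\le p<\xi_j}(p^j-p)\ge 0$. A two-sided bound on $\pi_2^-(\xi_2)$ is supplied directly by Corollary \ref{coropi2xi2}. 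For the four auxiliary quantities $\pi_2^-(\xi_3)$, $\pi_1^-(\xi_2)$, $\pi_1^-(\xi_3)$ and $R$ I would combine the inequalities $\xi_j \le (2\xi/j)^{1/j}$ of Lemma \ref{lemxik} with Corollaries \ref{coropi1}, \ref{coropi2} and \ref{coropi345}. The worst of these (namely $\pi_3(\xi_3)$, which dominates $R$) is $\co(\xi^{4/3}/\log\xi)$, and the tail of $R$ is controlled because the exponents $(j+1)/j$ strictly decrease to $1$.

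For the upper bound in \eqref{EN} I would use
$E(N') \le \pi_2^-(\xi_2) + \pi_1^-(\xi_3) + R$ (dropping the negative terms $-\pi_2^-(\xi_3)-\pi_1^-(\xi_2)$) and add the small error contributions on top of the $0.458/\log\xi$ coming from the upper part of \eqref{pi2xi2}. For the lower bound I would use $E(N') \ge \pi_2^-(\xi_2) - \pi_2^-(\xi_3) - \pi_1^-(\xi_2)$ (dropping $\pi_1^-(\xi_3)$ and $R$, both $\ge 0$) and subtract those two errors from the $0.122/\log\xi$ supplied by the lower part of \eqref{pi2xi2}.

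The hard part will be the lower bound, since the slack between the constant $0.122$ in \eqref{pi2xi2} and the target $0.12$ in \eqref{EN} is only $0.002/\log\xi$: one must verify that the ratios $\pi_2^-(\xi_3)/(\xi^{3/2}/\log\xi)$ and $\pi_1^-(\xi_2)/(\xi^{3/2}/\log\xi)$, which are respectively $\co(\xi^{-1/6})$ and $\co(\xi^{-1/2})$, actually fit inside that narrow margin. Exploiting $\xi \ge x_0$ (so $\log\xi \ge \log x_0 \approx 23.03$ and $\sqrt{\xi} \ge 10^5$), each of these ratios is of order $10^{-4}/\log\xi$ or smaller, so the margin is comfortably met; the remaining work is routine numerical bookkeeping, and the upper bound is loose enough that the $0.98/\log\xi$ constant is reached without difficulty.
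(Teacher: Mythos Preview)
Your plan is sound and close to the paper's, but there are a couple of points worth flagging.

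\textbf{Lower bound.} The paper takes a shorter route: since $p^j-p\ge p^2-p$ for every $j\ge 2$, one gets directly $E(N')\ge \pi_2^-(\xi_2)-\pi_1(\xi_2)$, without ever subtracting $\pi_2^-(\xi_3)$. Your decomposition gives the weaker $E(N')\ge \pi_2^-(\xi_2)-\pi_2^-(\xi_3)-\pi_1^-(\xi_2)$, so you carry an extra error. Also, your parenthetical order estimate is off: since $\xi_3^3\asymp \xi/\log\xi$, the ratio $\pi_2^-(\xi_3)/(\xi^{3/2}/\log\xi)$ is $\co(\xi^{-1/2})$, not $\co(\xi^{-1/6})$ (you may have been thinking of $\pi_3(\xi_3)$). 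This slip is harmless because the true order is smaller than you claimed, so the $0.002/\log\xi$ margin is still met. Note, though, that the sharp inequalities of Corollaries \ref{coropi1} and \ref{coropi2} have thresholds ($x\ge 1.1\times 10^8$ and $x\ge 6\times 10^4$) above $\xi_2\approx 69588$ and $\xi_3\approx 1468$; you would have to fall back on cruder bounds such as \eqref{pi126}, exactly as the paper does for $\pi_1(\xi_2)$.

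\textbf{Upper bound.} Here the paper's treatment of the tail differs more substantially. Rather than summing $\pi_j(\xi_j)$ for all $j\ge 3$, the paper fixes a cutoff $j_0$ and, for primes $p<\xi_{j_0}$, bounds $p^{v_p(N')}$ \emph{directly} via $p^{v_p(N')}\le \rho\log p/(1-1/p)$ and Lemma~\ref{lemW}; this produces a single clean term $S_1\le \omega\rho\,\xi_{j_0}$ in place of the long tail. With the optimal choice $j_0=6$ the constants add up to $0.98$. Your approach of bounding every $\pi_j(\xi_j)$ via Corollary \ref{coropi345} and \eqref{majpi6} is workable, but be aware that at $\xi=x_0$ the term $\pi_3(\xi_3)$ alone already contributes roughly $0.4$ to the coefficient of $1/\log\xi$, so the numerics are tight and the remaining $\sim 25$ terms must be tracked carefully; the paper's use of Lemma~\ref{lemW} avoids this bookkeeping.
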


\begin{proof}
With $J$ defined by \eqref{J}, from
\eqref{Nrho}, we have
\begin{equation}
  \label{EN1}
E(N') = \sum_{p\mid N'} \left(p^{v_p(N')}-p\right)=
\sum_{j=2}^J\;\;\sum_{\xi_{j+1}\le p < \xi_j} (p^j-p).
\end{equation}
For an asymptotic estimates of $E(N')$ see below \eqref{ENxi2}.

\paragraph{The lower bound.} From \eqref{EN1}, we deduce
\begin{equation}\label{ENmin}
E(N') \ge \sum_{p<\xi_2} p^2 -\sum_{p\le \xi_2} p=\pi_2^-(\xi_2)-\pi_1(\xi_2).
\end{equation}
As $\xi_2\ge x_2^{(0)}> 69588$ holds, from \eqref{xsk1sk}
we have $\xi_2^2\le \xi/2$ and
from \eqref{pi126}, $\pi_1(\xi_2) \le \xi_2\pi(\xi_2)\le
1.26 \xi_2^2/\log \xi_2 \le 0.63\xi/\log 69588\le 0.057\xi$.
From
\eqref{ENmin} and \eqref{pi2xi2}, it follows that
\begin{align*}
  E(N') &\ge \frac{\xi^{3/2}}{3\sqrt 2 \log \xi}
  \left(1+\frac{0.122}{\log \xi}\right) -0.057 \xi\\
&=\frac{\xi^{3/2}}{3\sqrt 2 \log \xi}\left(1+\frac{1}{\log \xi}
  \left(0.122-\frac{0.057\times 3\sqrt 2 \log^2 \xi}{\sqrt \xi}\right) \right)\\
&\ge \frac{\xi^{3/2}}{3\sqrt 2 \log \xi}\left(1+\frac{1}{\log \xi}
  \left(0.122-\frac{0.057\times 3\sqrt 2 \log^2 x_0}{\sqrt x_0}\right) \right)\\
&\ge \frac{\xi^{3/2}}{3\sqrt 2 \log \xi}\left(1+\frac{0.12}{\log \xi}\right)
\end{align*}
which proves the lower bound of \eqref{EN}.
\medskip

\bfni{The upper bound.} Let us consider an integer $j_0$, $3\le j_0 \le
29$, that will be fixed later; \eqref{EN1} implies
\begin{equation}
  \label{EN2}
E(N')=\sum_{p<\xi_{j_0}}\left(p^{v_p(N')}-p\right)+\sum_{j=2}^{j_0-1}\;\;
\sum_{\xi_{j+1} \le p < \xi_j} (p^j-p) \le S_1+S_2.
\end{equation}
with
\[S_1=\sum_{p<\xi_{j_0}} p^{v_p(N')} \qtx{and} 
S_2=\sum_{j=2}^{j_0-1}\pi_j(\xi_j).\]
Let $\rho=\xi/\log \xi$ be the common parameter of $N'$ and
$N''$. From \eqref{Nrho}, for $p<\xi_{j_0}$, we have $p^{v_p(N')}\le \rho(\log
p)/(1-1/p)$ so that Lemma \ref{lemW} leads to 
$S_1\le \rho W(\xi_{j_0})\le \om\rho\xi_{j_0}$ with $\om=1.000014$ if $j_0\le 10$
and $\om=1.346$ if $j_0\ge 11$, since $x^{(0)}_{11}=6.55 < 7.32< x^{(0)}_{10}=7.96$.
In view of applying \eqref{xsk1sk} and \eqref{xkksk}, we set $\beta_j=j$ 
for $2\le j\le 8$ and $\beta_j=j(1-1/x_j^{(0)})$ (with $x_j^{(0)}$
defined by \eqref{xj0}) for $9\le j \le 29$. Therefore, from Lemma
\ref{lemW}, \eqref{xsk1sk} and \eqref{xkksk}, we get
\begin{multline}\label{EN3}
S_1\le \om\frac{\xi\xi_{j_0}}{\log \xi} 
\le \frac{\om \xi^{1+1/j_0}}{\beta_{j_0}^{1/j_0}\log \xi}
= \frac{\xi^{3/2}}{3\sqrt 2 \log^2 \xi}
 \left(\frac{3\sqrt 2\, \om \log \xi}{\beta_{j_0}^{1/j_0}\xi^{1/2-1/j_0}} \right)\\
\le \frac{\xi^{3/2}}{3\sqrt 2 \log^2 \xi}
  \left(\frac{3\sqrt 2\, \om\log x_0}{\beta_{j_0}^{1/j_0}x_0^{1/2-1/j_0}}\right). 
\end{multline}
In view of applying \eqref{majpi3}--\eqref{majpi6}, we set
$\al_3=0.271$, $\al_4=0.237$, $\al_5=0.226$ and, for $j\ge 6$,
$\al_j=(1+(2/3)^j)(\log 3)/3$. Therefore, for $3\le j \le 29$, it follows
from \eqref{majpi3}--\eqref{majpi6}, \eqref{xsk1sk} and \eqref{xkksk}
that $\pi_j(\xi_j)\le \al_j \xi_j^{j+1}/\log \xi_j$,  $\xi_j \le
(\xi/\beta_j)^{1/j}$ and
\begin{equation}
  \label{EN4}
 \pi_j(\xi_j)\le \al_j\frac{\xi_j^{j+1}}{\log \xi_j} \le 
\frac{\al_j (\xi/\beta_j)^{1+1/j}}{(1/j)\log (\xi/\beta_j)}=
\frac{j\al_j (\xi/\beta_j)^{1+1/j}}{(\log \xi)(1-(\log \beta_j)/\log
  \xi)}\le \ga_j\frac{\xi^{1+1/j}}{\log \xi}
\end{equation}
with 
\[\ga_j=\frac{j\al_j}{\beta_j^{1+1/j}(1-(\log \beta_j)/\log x_0)}.\]
Further, for $3\le j\le j_0-1$, we have
\[\ga_j\frac{\xi^{1+1/j}}{\log \xi}\frac{3\sqrt 2\log^2
  \xi}{\xi^{3/2}}=
\frac{3\sqrt 2\ga_j\log \xi}{\xi^{1/2-1/j}} \le \de_j\qtx{with} 
\de_j=\frac{3\sqrt 2\ga_j\log x_0}{x_0^{1/2-1/j}}\]
which implies from \eqref{EN4}
\begin{equation}
  \label{pixij}
  \pi_j(\xi_j)\le \frac{\de_j\xi^{3/2}}{3\sqrt 2 \log^2 \xi}.
\end{equation}
From the definition of $S_2$, from \eqref{pi2xi2} and from
\eqref{pixij}, one gets
\begin{multline*}
  S_2\le \sum_{j=2}^{j_0-1}\;\;\sum_{\xi_{j+1} \le p < \xi_j}
  p^j\le \sum_{j=2}^{j_0-1} \pi_j(\xi_j)\\
  \le
  \frac{\xi^{3/2}}{3\sqrt 2 \log
  \xi}\left(1+\frac{1}{\log \xi}\left(0.458+\sum_{j=3}^{j_0-1}
    \de_j\right)\right).
\end{multline*}
Finally, from \eqref{EN2} and \eqref{EN3}, we conclude
\[E(N')\le \frac{\xi^{3/2}}{3\sqrt 2 \log
  \xi}\left(1+\frac{1}{\log \xi}
\left(\frac{3\sqrt 2\,\om \log x_0}{\beta_{j_0}^{1/j_0}x_0^{1/2-1/j_0}} 
+0.458+\sum_{j=3}^{j_0-1} \de_j\right)\right)\]
which, by choosing $j_0=6$, completes the proof of \eqref{EN} (cf. \cite{web}). 
\end{proof}

\begin{rem}
  When $n=\nu_0$, $N'=N_0'$ is given by \eqref{N'0} and
  $E(N_0')=10\,517\,469\,635\,602$. Observing that $E(N'_0)$ is equal to
$\dfrac{x_0^{3/2}}{3\sqrt 2\log x_0}\Big(1+\dfrac{0.632\ldots}{\log x_0}\Big)$ 
shows that the constant $0.98$ in \eqref{EN} cannot be shortened below $0.632$.
\end{rem}

\subsubsection{The multiplicative excess}\label{parmulexc}

Let $N$ be a positive integer and $Q(N)=\prod_{p\mid N} p$ be the 
squarefree part of $N$. The multiplicative
excess $E^*(N)$ of $N$ is defined by
\begin{equation}
  \label{E*}
  E^*(N)=\log\left(\frac{N}{Q(N)}\right)=\log N-\sum_{p\mid N} \log p=
\sum_{p\mid N} (v_p(N)-1)\log p.
\end{equation}
If $N'$ and $N''$ are two consecutive superchampion numbers of common
parameter $\rho=\xi/\log \xi$ (cf. Proposition \ref{propNrho}), 
then from \eqref{Nrho},
\begin{equation}\label{ellN'E*}
\log N'=\sum_{p\mid N'} \log p +E^*(N')=\sum_{p< \xi} \log p +E^*(N').
\end{equation}

\begin{prop}\label{propEN*}
Let $n$ be an integer satisfying $n\ge \nu_0$ (defined by \eqref{n0})
and $N'$ and $\xi$ defined by Definition \ref{nsuperch} so that $\xi \ge
x_0=10^{10}+19$ holds. Then the multiplicative excess $E^*(N')$ satisfies
\begin{equation}
  \label{EN*}
\sqrt{\frac{\xi}{2}}\left(1-\frac{0.521}{\log \xi}\right) 
\le E^*(N') \le
\sqrt{\frac{\xi}{2}}\left(1+\frac{0.305}{\log \xi}\right) \le 0.72 \sqrt{\xi}.   
\end{equation}
\end{prop}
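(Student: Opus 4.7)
The plan is to find, in analogy with the proof of Proposition \ref{propEN}, a clean expression for $E^*(N')$ as a sum over $k$ of Chebyshev sums at the $\xi_k$'s, and then apply Corollary \ref{corothxi2} to the main term $\theta(\xi_2)$, while controlling the remaining (rapidly decaying) tail via the bounds of Lemma \ref{lemxik}. Specifically, writing $v_p(N')-1=\#\{k\ge 2:v_p(N')\ge k\}=\#\{k\ge 2:p<\xi_k\}$ and exchanging the order of summation in \eqref{E*}, one obtains the key identity
\[
E^*(N')=\sum_{p\mid N'}(v_p(N')-1)\log p=\sum_{k=2}^{J}\theta^-(\xi_k),
\]
which is a finite sum by \eqref{J}, and in which every term is non-negative. (The equality $\sum_{p<\xi_k,\,p\mid N'}\log p=\theta^-(\xi_k)$ holds because all primes $p<\xi_k\le\xi$ divide $N'$ by \eqref{Nrho}.)

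The lower bound is then immediate: dropping all terms with $k\ge 3$ gives $E^*(N')\ge\theta^-(\xi_2)$, and the left-hand inequality of Corollary \ref{corothxi2} delivers exactly $E^*(N')\ge\sqrt{\xi/2}(1-0.521/\log\xi)$. For the upper bound I would use the right-hand inequality of Corollary \ref{corothxi2} on the leading $k=2$ term, $\theta(\xi_2)\le\sqrt{\xi/2}(1-0.346/\log\xi)$, and then bound the tail
\[
T=\sum_{k=3}^{J}\theta^-(\xi_k)\le\sum_{k=3}^{J}\xi_k\Bigl(1+\frac{0.000079}{\log \xi_k}\Bigr),
\]
using \eqref{eq79} together with the Lemma \ref{lemxik} estimates $\xi_k\le(\xi/(k(1-1/x_k^{(0)})))^{1/k}$ from \eqref{xkksk} for $3\le k\le 29$ and $\xi_k\le(2\xi/k)^{1/k}$ from \eqref{xk1sk}--\eqref{xsk1sk} for larger $k$. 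The $k=3$ term $\le(2\xi/3)^{1/3}$ is of order $\xi^{1/3}=o(\sqrt{\xi}/\log\xi)$, and the subsequent terms decay geometrically fast, so the total tail is $O(\sqrt{\xi/2}/\log\xi)$. Adding the two contributions yields
\[
E^*(N')\le \sqrt{\xi/2}\left(1-\frac{0.346}{\log\xi}\right)+T,
\]
and it remains to check that the tail satisfies $T\le 0.651\,\sqrt{\xi/2}/\log\xi$ for $\xi\ge x_0$, so that the total bound is $\sqrt{\xi/2}(1+0.305/\log\xi)$. The final inequality $E^*(N')\le 0.72\sqrt{\xi}$ then follows by noting that $1+0.305/\log x_0<1.02$ and $\sqrt{\xi/2}\cdot 1.02/\sqrt{\xi}<0.72$.

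The main obstacle is the explicit numerical verification of the bound $T\le 0.651\sqrt{\xi/2}/\log\xi$. Following the split-sum strategy used to prove \eqref{EN}, I would fix a moderate cut-off $k_0$ (say $k_0=8$), estimate the terms $k=3,\ldots,k_0-1$ individually by plugging the explicit values $x_k^{(0)}$ from \eqref{xj0} into \eqref{xkksk}, and bound the remaining tail $\sum_{k\ge k_0}\theta(\xi_k)$ crudely via $\xi_k\le(2\xi/k)^{1/k}$ and the trivial estimate $\theta(t)\le\omega t$ from Lemma \ref{lemW}. Once the inequality is checked at $\xi=x_0$, the monotonicity principle \eqref{fab} applied to $\xi\mapsto\xi^{1/k}\log\xi/\sqrt{\xi}$ (decreasing for $\xi$ large in the exponents $1/k$, $k\ge 3$) extends the bound to all $\xi\ge x_0$, concluding the proof.
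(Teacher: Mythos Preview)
Your proposal is correct and follows essentially the same route as the paper: the identity $E^*(N')=\sum_{j=2}^J\theta^-(\xi_j)$, the lower bound via $\theta^-(\xi_2)$ and Corollary~\ref{corothxi2}, and the upper bound by bounding $\theta(\xi_2)$ with Corollary~\ref{corothxi2} and the tail via Lemma~\ref{lemxik} together with a Chebyshev upper bound. The only cosmetic differences are that the paper takes the larger cutoff $j_0=26$ and bounds the far tail by $J\,\xi_{j_0}$ rather than summing term by term, and it uses \eqref{thx<x1} in place of \eqref{eq79}; with either choice the numerics go through (your $k_0=8$ also works once the tail is summed as you indicate), and your verification of the final inequality $\le 0.72\sqrt{\xi}$ is exactly what the paper leaves implicit.
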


\begin{rem}
  When $n=\nu_0$, $N'=N_0'$ is given by \eqref{N'0} and
  $E^*(N_0')=70954.46\ldots=\sqrt{x_0/2}(1+(0.079385\ldots)/\log x_0)$,
  so  that the constant $0.305$ in \eqref{EN*} cannot be shortened below $0.079$.
\end{rem}

\begin{proof}
{\bf The lower bound}. 
From \eqref{E*}, \eqref{Nrho} and \eqref{J}, we may write
\begin{equation}
  \label{E*N1}
  E^*(N')=\sum_{j\ge 2}\;\;\sum_{\xi_{j+1}\le p < \xi_j}(j-1)\log p=
\sum_{j= 2}^J\theta^-(\xi_j)
\end{equation}
with $\theta^-(x)=\sum_{p<x} \log p$.
From \eqref{E*N1}, we deduce $E^*(N')\ge \theta^-(\xi_2)$ which, 
from \eqref{thxi2}, proves the lower bound of \eqref{EN*}.
\medskip

\bfni{The upper bound.} From \eqref{E*N1} and \eqref{J}, it follows
that
\begin{equation}
  \label{E*N3}
  E^*(N')\le \sum_{j=2}^J \theta(\xi_j).
\end{equation}
Let us fix $j_0=26$. From \eqref{thx<x1} with $\eps=7.5\times
10^{-7}$, we write
\begin{equation}
  \label{E*N4}
  \sum_{j=3}^{J} \theta(\xi_j)\le 
(1+\eps)\left(\sum_{j=3}^{j_0-1} \xi_j+\sum_{j=j_0}^{J}\xi_j\right)
= (1+\eps)(S_1+S_2).
\end{equation}

From \eqref{xsk1sk} and \eqref{xkksk} with $\beta_j=j$ for $3\le j \le 8$ and
$\beta_j=j(1-1/x_j^{(0)})$ for $9\le j \le j_0$, we get
\begin{multline}
  \label{S1maj}
  S_1=\sum_{j=3}^{j_0-1} \xi_j\le 
\sum_{j=3}^{j_0-1} \left(\frac{\xi}{\beta_j}\right)^{1/j}= 
\sqrt{\frac{\xi}{2}}\left(\frac{1}{\log \xi}\right)
\sum_{j=3}^{j_0-1}   \frac{\sqrt 2 \log \xi}{\beta_j^{1/j}\xi^{1/2-1/j}}\\
\le \sqrt{\frac{\xi}{2}}\left(\frac{1}{\log \xi}\right)
\sum_{j=3}^{j_0-1}   \frac{\sqrt 2 \log x_0}{\beta_j^{1/j}
  x_0^{1/2-1/j}} = 0.627703\ldots \sqrt{\frac{\xi}{2}}\left(\frac{1}{\log \xi}\right).
\end{multline}
Further, from \eqref{J} and \eqref{xkksk}, since $\xi_j$ is decreasing
on $j$, we have

\begin{multline}
  \label{S2maj}
  S_2=\sum_{j=j_0}^{J} \xi_j\le J\xi_{j_0} \le \frac{(\log
         \xi)\xi^{1/j_0}}{(\log 2)\beta_{j_0}^{1/j_0}}=
\sqrt{\frac{\xi}{2}}\left(\frac{1}{\log \xi}\right)
\left(\frac{\sqrt 2\log^2 \xi}{(\log 2)\beta_{j_0}^{1/j_0}\xi^{1/2-1/j_0}}\right)\\
\le \sqrt{\frac{\xi}{2}}\left(\frac{1}{\log \xi}\right)
\left(\frac{\sqrt 2\log^2 x_0}{(\log 2)\beta_{j_0}^{1/j_0}x_0^{1/2-1/j_0}}\right)
=0.022597\ldots \sqrt{\frac{\xi}{2}}\left(\frac{1}{\log \xi}\right).
\end{multline}
Finally, from \eqref{E*N3}, \eqref{thxi2}, \eqref{E*N4}, 
\eqref{S1maj} and \eqref{S2maj}, we conclude
\begin{align*}
 E^*(N') &\le  \sqrt{\frac{\xi}{2}}\left(1+\frac{1}{\log \xi}
   (-0.346+(1+\eps)(0.6278+0.0226))\right)\\
 &< \sqrt{\frac{\xi}{2}}\left(1+\frac{0.305}{\log \xi} \right)
\end{align*}
which ends the proof of Proposition \ref{propEN*}.
\end{proof}

\subsubsection{The number $s(n)$  of primes dividing $h(n)$ but not $N'$}\label{parsn}

Let $n\ge 7$  and $N',N''$ and $\xi$ defined by Definition
\ref{nsuperch}. Let us denote by $p_{i_0}$ the largest prime factor of
$N'$. Note, from \eqref{Nrho}, that $p_{i_0}$ is the largest prime $< \xi$. 
If $\xi$ is prime, we have $\xi=p_{i_0+1}$ while, if $\xi$ is not
prime $p_{i_0+1} > \xi$. In both cases, we have
\begin{equation}
  \label{pi0p1>xi}
 p_{i_0} < \xi \le p_{i_0+1} 
\end{equation}
From the definition of the additive excess \eqref{E}, we define 
$s=s(n)\ge 0$ by
\begin{equation}
  \label{defs}
p_{i_0+1}+\ldots + p_{i_0+s} \le n-\ell(N')+E(N') < p_{i_0+1} +\ldots + p_{i_0+s+1}. 
\end{equation}

\begin{prop}\label{propsn}
If $n\ge \nu_0$ (defined by \eqref{n0}) and $\xi$ defined in Definition
\ref{nsuperch}, we have
\begin{equation}
  \label{sn}
\frac{\sqrt \xi}{3\sqrt 2\log \xi}\left(1+\frac{0.095}{\log \xi}\right)
\le s \le
\frac{\sqrt \xi}{3\sqrt 2\log \xi}\left(1+\frac{1.01}{\log \xi}\right).
\end{equation}
\end{prop}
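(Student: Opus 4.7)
The plan is to estimate $s$ by first pinning down the value $y = p_{i_0+s}$ via the approximate relation $\pi_1(y) - \pi_1^-(\xi) \approx n - \ell(N') + E(N')$, and then converting the bound on $y$ into a count of primes in the short interval $(p_{i_0}, y]$. Setting $B := n-\ell(N')+E(N')$, the first step is to bound $B$. From \eqref{N'n} and \eqref{ellN'N''xi} we have $0 \le n - \ell(N') < \xi$, which combined with Proposition \ref{propEN} gives
\[
\frac{\xi^{3/2}}{3\sqrt 2\log\xi}\Big(1+\frac{0.12}{\log\xi}\Big) \le B \le \frac{\xi^{3/2}}{3\sqrt 2\log\xi}\Big(1+\frac{0.98}{\log\xi}\Big) + \xi,
\]
and the extra $\xi$ term can be absorbed into a slightly enlarged correction $c/\log\xi$ since $\xi/\big(\xi^{3/2}/\log^2\xi\big) = \log^2\xi/\sqrt\xi$ is of order $10^{-3}$ once $\xi \ge x_0 = 10^{10}+19$.

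The definition \eqref{defs} of $s$, together with $\pi_1^-(\xi) = \pi_1(p_{i_0})$ (as $p_{i_0}<\xi\le p_{i_0+1}$), rewrites as
\[
\pi_1(p_{i_0+s}) - \pi_1^-(\xi) \le B < \pi_1(p_{i_0+s+1}) - \pi_1^-(\xi).
\]
Writing $y = p_{i_0+s} = \xi(1+u)$ and using the effective bounds \eqref{pi1maj}, \eqref{pi1min} of Corollary \ref{coropi1} for $\pi_1(y)$ and $\pi_1(\xi)$, a Taylor expansion in $u$ gives $\pi_1(y)-\pi_1(\xi) = \frac{\xi^2 u}{\log\xi}(1+O(u)+O(1/\log\xi))$. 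Solving $\pi_1(y)-\pi_1^-(\xi) \approx B$ for $u$ (and using the bounds on $B$ from the previous paragraph) yields $u = \frac{1}{3\sqrt{2\xi}}(1+O(1/\log\xi))$, that is
\[
y - \xi = \frac{\sqrt\xi}{3\sqrt 2}\,\Big(1 + O(1/\log\xi)\Big),
\]
with explicit lower and upper constants inherited directly from $0.12$ and $0.98$ of Proposition \ref{propEN}. The same analysis for $p_{i_0+s+1}$ shows that $p_{i_0+s+1} - \xi$ lies in the same narrow range, so the off-by-one at the boundary of the interval is harmless.

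Finally, $s$ equals the number of primes in $(p_{i_0}, y]$, so $s = \pi(y) - \pi^-(\xi)$. Since $y-\xi = O(\sqrt\xi)$ is tiny compared with $\xi$, I would invoke the effective form \eqref{dusart3} of the PNT for $\theta$ at $\xi$ and $y$ and partial summation (essentially Lemma \ref{lempixy}) to obtain $\pi(y) - \pi(\xi) = (y-\xi)/\log\xi\,(1+O(1/\log\xi))$; the (bounded) difference $\pi(\xi)-\pi^-(\xi) \in \{0,1\}$ contributes only $O(1)$, which is negligible compared with the main term of order $\sqrt\xi/\log\xi$. Substituting the bounds on $y - \xi$ from the previous step produces
\[
s = \frac{\sqrt\xi}{3\sqrt 2\log\xi}\Big(1+\frac{O(1)}{\log\xi}\Big),
\]
with explicit constants. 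The main obstacle is the careful bookkeeping needed to derive the precise constants $0.095$ and $1.01$: several error contributions must be combined (the $0.12/0.98$ in $E(N')$, the quadratic-inversion corrections when solving for $y$, the effective-PNT corrections for the prime count, and the negligible $\log^2\xi/\sqrt\xi$ slack coming from $n-\ell(N') \le \xi$ and $\pi_1^-(\xi) = \pi_1(\xi) + O(\xi)$), all while keeping the inequalities in the correct direction and using the hypothesis $\xi \ge x_0$ to swallow lower-order terms.
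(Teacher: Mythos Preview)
Your approach has a genuine gap at the heart of steps 3 and 4: the effective estimates you invoke are far too coarse to control differences over an interval of length $O(\sqrt\xi)$. Concretely, Corollary \ref{coropi1} gives upper and lower bounds for $\pi_1(x)$ that differ by a term of size $(107/160-3/20)\,x^2/\log^4 x$. To bound $\pi_1(y)-\pi_1^-(\xi)$ you must combine an upper bound at $y$ with a lower bound at $\xi$ (or vice versa), so this discrepancy survives the subtraction. At $\xi=x_0=10^{10}+19$ it is about $1.9\times 10^{14}$, whereas the quantity $B$ you are trying to match is only $\xi^{3/2}/(3\sqrt 2\log\xi)\approx 10^{13}$: the error swamps the signal by a factor of roughly $20$. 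The same problem recurs in step 4: from \eqref{dusart3} the uncertainty in $\theta(y)-\theta(\xi)$ is of order $\xi/\log^3\xi\approx 8\times 10^5$, while $y-\xi\approx\sqrt\xi/(3\sqrt 2)\approx 2.4\times 10^4$. So you cannot extract $\pi(y)-\pi(\xi)=(y-\xi)/\log\xi\,(1+O(1/\log\xi))$ with any useful constant from these tools; effective PNT estimates are simply not sharp enough on intervals of length $O(\sqrt x)$.

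The paper avoids this entirely by never estimating $\pi_1(y)-\pi_1^-(\xi)$ or $\pi(y)-\pi(\xi)$ through global asymptotics. Instead it uses the trivial inequalities
\[
s\,\xi \;\le\; p_{i_0+1}+\cdots+p_{i_0+s} \;\le\; B \;<\; p_{i_0+1}+\cdots+p_{i_0+s+1} \;\le\; (s+1)\,p_{i_0+s+1},
\]
since every prime in the sum lies between $\xi$ and $p_{i_0+s+1}$. The upper bound on $s$ is then immediate from $s\xi\le B$ and \eqref{EN}. For the lower bound, the paper first uses this upper bound to see that $s+1<0.011\sqrt\xi$, then applies Corollary \ref{coropixy} (which counts primes in an interval of relative length $1/\log^2\xi$, long enough for \eqref{dusart3} to bite) to conclude $p_{i_0+s+1}\le\xi(1+0.045/\log^2\xi)$, and finally reads off $s+1\ge B/p_{i_0+s+1}$. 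The only place a prime-in-short-interval estimate is needed is to show $p_{i_0+s+1}$ is close to $\xi$, and there the interval is chosen just wide enough for the effective PNT to work.
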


\begin{proof}
\bfni{The upper bound.}
Since $p_{i_0+1}\ge \xi$ and 
$n-\ell(N')< \ell(N'')-\ell(N')\le \xi$ hold (cf. \eqref{pi0p1>xi} and
\eqref{ellN'N''}), \eqref{defs} and \eqref{EN} imply
\begin{align}
  \nonumber
  s\xi &\le  n-\ell(N')+E(N')
\le \xi+\frac{\xi^{3/2}}{3\sqrt 2\log \xi}\Big(1+\frac{0.98}{\log
         \xi}\Big)\\
  \nonumber
&\le \frac{\xi^{3/2}}{3\sqrt 2\log \xi}\Big(1+\frac{1}{\log \xi}
              \big(0.98+\frac{3\sqrt 2\log^2 \xi}{\sqrt \xi}\big)\Big)\\
 \nonumber
&\le \frac{\xi^{3/2}}{3\sqrt 2\log \xi}\Big(1+\frac{1}{\log \xi}
  \big(0.98+\frac{3\sqrt 2\log^2 x_0}{\sqrt x_0}\big)\Big)\\
    \label{majs}
&\le \frac{\xi^{3/2}}{3\sqrt 2\log \xi}\left(1+\frac{1.01}{\log \xi}\right)
\end{align}
which yields the upper bound of \eqref{sn}.

\bfni{The lower bound.} First, from \eqref{majs},  we observe that
\begin{align}
  \label{sp1<}
  s+1 &\le \frac{\xi^{1/2}}{3\sqrt 2\log \xi}\left(1+\frac{1.01}{\log
        \xi}\right) +1\notag \\
&= \frac{\xi^{1/2}}{3\sqrt 2\log \xi}\left(1+\frac{1}{\log \xi}
  \left(1.01+\frac{3\sqrt 2\log^2 \xi}{\sqrt \xi}\right)\right) \notag
  \\
&\le \frac{\xi^{1/2}}{3\sqrt 2\log \xi}\left(1+\frac{1}{\log \xi}
       \left(1.01+\frac{3\sqrt 2\log^2 x_0}{\sqrt x_0}\right)\right)
       \notag \\
&\le \frac{\xi^{1/2}}{3\sqrt 2\log \xi}\left(1+\frac{1.033}{\log \xi}\right),
\end{align}
which implies
\begin{equation}
  \label{sp1<rac}
s+1 \le \frac{\xi^{1/2}}{3\sqrt 2\log x_0}\left(1+\frac{1.033}{\log x_0}\right)
< 0.011\sqrt \xi.   
\end{equation}
From Corollary \ref{coropixy}, the number of primes between $\xi$ and 
$\xi(1+0.045/\log^2 \xi)$ is $\ge 0.011 \sqrt \xi > s+1$ so that, 
in \eqref{defs}, we have
\begin{equation}
  \label{pi0psp1<}
  p_{i_0+s+1}\le \xi\left(1+\frac{0.045}{\log^2 \xi}\right)
\le \xi\left(1+\frac{0.045}{(\log x_0)(\log \xi)}\right)
\le \xi\left(1+\frac{0.002}{\log \xi}\right).
\end{equation}
From \eqref{N'n}, we get $n-\ell(N')\ge 0$. Therefore, from
\eqref{EN}, \eqref{defs} and \eqref{pi0psp1<}, we have
\[\frac{\xi^{3/2}}{3\sqrt 2\log \xi}\left(1+\frac{0.12}{\log \xi}\right)
  \le E(N') \le n-\ell(N')+E(N')
\le (s+1)\xi\left(1+\frac{0.002}{\log \xi}\right)\]
which yields
\begin{multline*}
  s+1
\ge  \frac{\xi^{1/2} (1+0.12/\log \xi)}{3\sqrt 2(\log \xi ) (1+0.002/\log \xi)} 
\ge \frac{\xi^{1/2}}{3\sqrt 2\log \xi}
  \left(1+\frac{0.12}{\log \xi}\right)\left(1-\frac{0.002}{\log \xi}\right)\\
\ge \frac{\xi^{1/2}}{3\sqrt 2\log \xi}
  \left(1+\frac{0.118}{\log \xi}-\frac{0.002\times 0.12}{(\log x_0)\log \xi} \right) 
\ge \frac{\xi^{1/2}}{3\sqrt 2\log \xi}\left(1+\frac{0.1179}{\log \xi}\right)
\end{multline*}
and
\begin{align}
 s &\ge  \frac{\xi^{1/2}}{3\sqrt 2\log \xi}\Big(1+\frac{0.1179}{\log \xi}\Big)-1\notag\\
&=\frac{\xi^{1/2}}{3\sqrt 2\log \xi}\left(1+\frac{1}{\log \xi}
\bigg(0.1179-\frac{3\sqrt 2 \log^2 \xi}{\sqrt \xi}\bigg)\right)\notag\\
&\ge \frac{\xi^{1/2}}{3\sqrt 2\log \xi}\left(1+\frac{1}{\log \xi}
\bigg(0.1179-\frac{3\sqrt 2 \log^2 x_0}{\sqrt x_0}\bigg)\right)\notag\\
&\ge \frac{\xi^{1/2}}{3\sqrt 2\log \xi}\left(1+\frac{0.095}{\log
  \xi}\right)
  \label{s>}
\end{align}
and the proof of Proposition \ref{propsn} is completed.
\end{proof}

From \eqref{pi0psp1<}, we deduce for $\xi\ge x_0$
\begin{equation}
  \label{logpi0p1<}
  \frac{ \log p_{i_0+s+1}}{\log \xi} \le 1+\frac{0.002}{\log^2 \xi}
\le 1+\frac{0.002}{(\log x_0)(\log \xi)} \le 1+\frac{0.0001}{\log \xi}
\end{equation}
and, from \eqref{sp1<},
\begin{align}
  (s+1)\log p_{i_0+s+1}
&\le  \frac{\sqrt \xi}{3\sqrt 2} \left(1+\frac{1.033}{\log \xi}\right)
  \left(1+\frac{0.0001}{\log \xi}\right)\notag \\
&= \frac{\sqrt \xi}{3\sqrt 2}
  \left(1+\frac{1.0331}{\log \xi}+\frac{0.0001033}{\log^2 \xi}\right)\notag \\
&\le \frac{\sqrt \xi}{3\sqrt 2}\left(1+\frac{1.0331}{\log \xi}+
  \frac{0.0001033}{(\log x_0)(\log \xi)}\right)\notag\\
&\le \frac{\sqrt \xi}{\sqrt 2}\left(\frac 13+\frac{0.345}{\log
  \xi}\right).
    \label{sp1log}
\end{align}
We shall also deduce from \eqref{s>} the following inequality 
valid for $\xi\ge x_0$:
\begin{multline}
  \label{sm1>}
(s-1)\log \xi
\ge  \frac{\sqrt \xi}{3\sqrt 2}\left(1+\frac{0.095}{\log \xi}\right)-\log \xi
=\frac{\sqrt \xi}{3\sqrt 2}
  \left(1+\frac{0.095}{\log \xi}-\frac{3\sqrt 2\log^2 \xi}{(\log \xi)\sqrt \xi}\right)\\
\ge   \frac{\sqrt \xi}{3\sqrt 2}
  \left(1+\frac{0.095}{\log \xi}-\frac{3\sqrt 2\log^2 x_0}{(\log \xi) \sqrt x_0}\right)
\ge \frac{\sqrt \xi}{3\sqrt 2}\left(1+\frac{0.0724}{\log \xi}\right).
\end{multline}

 %%%%%%%%%%%%%%%%%%%%%%%%%%%%%%%%%%%%%%%%%%%%%%%%%%%%
  \section{Some computational points}\label{parComputationalPoints}

  \subsection{Enumeration of superchampion numbers}\label{generate}

  Let us recall that $m$ is said a \emph{squarefull integer}
if, for every prime factor $p$ of $m$, $p^2$ divides $m$.
Each  $n \ge 1$ may be writen in a unique way $n = a b$, with $a$ squarefull,
$b$ squarefree and $a,b$ coprime. In the case where $n$ is a
superchampion number $N$,  we will say that
$a$ is the prefix\footnote{In \cite{DNZ} the term prefix is used with
  a  different meaning.}  of $N$.

Let $N \le N'$ be two consecutive superchampions, and $A, A'$ their
prefixes. In most of the cases $A'=A$  and $N' = p'N$ where $p'$ is
the prime following $\Pplus(N)$.

 When this is not the case, $N'  = q N$ and $A'=qA$ where $q$ is a prime facteur
 of $A$, or the prime following $\Pplus(A)$. In this case, we say that $N'$ is
 a superchampion of type 2.

  For example, let us consider the figure \ref{arraygn}. In this table
  two superchampions are of type $2$, $180\, 180$
  which is equal to 3 times its predecessor,
  and $360\,360$ which is equal to 2 times its predecessor.

  The superchampions of type $2$ are not very numerous. There are
  $455059774$ superchampions $N$ satisfying $12 \le N \le N'_0$
  (cf. \eqref{n0})  whose $7265$ are of type 2. We have precomputed the table
  TabT2, which, for each of these   $7265$ numbers $N$, keeps the
  triplet $(\ell(N), q, \log N)$, where $q$ is the quotient
  of $N$ by its predecessor (which, generally, is not of type $2$).
  For example, entries associated to the superchampions  $180\, 180$
  and $360\,360$ are the triplets  $(49,3, 12.101\ldots)$ and $(53, 2,
  12.794\ldots)$.
  With this table it is very fast to enumerate the increasing sequence
  of $(\ell(N), \log N)$ for all the superchampion  numbers.
  Let us associate to each superchampion $N$ the quadruple
  $(\ell(N), \log N, \Pplus(N), j)$, where $j$ is the
  smallest integer such that TabT2$[j][1] > \ell(N)$.

  The following function,   written in Python's programming language, computes the
  quadruple associated to the successor of $N$.
  
\begin{figure}[h]
  \begin{python}
    def next_super_ch(n, logN, pplusN, j):
       p =  next_prime(pplusN)
       if  n + p <= TabT2[j][1]:
           return (n+p, logN + log(p), p, j)
       else
          return(TabT2[j][1], logN + log(TabT2[j][2]), pplusN, j+1)
  \end{python}
  \caption{Enumeration of super-champion numbers}
  \label{enumsch}
\end{figure}

Using a prime generator function, which computes the sequence of
successive primes up to $n$ in time $\bigo{n\log \log n}$,
we wrote a C$^{++}$ function which computes
the pairs $(\ell(N), \log N)$, for all
superchampion numbers up to $N'_0$, in time about 22 seconds.

\subsection{Computing and bounding $g(n)$ and $h(n)$ on finite intervals}

\subsubsection{Computating an isolate value of $\log h(n)$ or $\log g(n)$}
The computation of an isolate value $\log h(n)$ by \eqref{hNG}
(cf. \cite[Section 8]{DNh1}) or $\log g(n)$ (by the algorithm
described in \cite{DNZ})  is
relatively slow. The table below shows the time of these computations
in ms.  for $n$ randomly choosen in intervals $[1,10^{j}]$ for
$j=9,12,15,16,17,18$ (on a MacBook 2016 computer).
\begin{center}
\begin{tabular}{|c|r|r|r|r|r|r|}
 \hline
  n & $10^9$ & $10^{12}$  & $10^{15}$ & $10^{16}$  & $10^{17}$ & $10^{18}$ \\
  \hline
 $\log$  h(n) &   5.55 &  5.78 &  6.74  & 7.38 &  8.39 & 9.80\\
  \hline
  $\log$ g(n) &  10.1 &  38.2 &  221. & 589. &  2\,467. &7\,980.\\
  \hline
\end{tabular}
\end{center}

For $n > 10^{16}$ the computation of an isolate value $\log g(n)$ takes a
few seconds, and it is impossible to compute more than some thousands
of these values.

\subsubsection{Bounding by slices}
We will need effectives bounds of $\log g(n)$ or $\log h(n)$ on intervals up to
$\nu_0=2.22 \cdot 10^{18}$ (cf. \eqref{n0}).  It is
impossible to compute a lot of these  values
for ordinary large integers $n$.  Nethertheless, by using
\texttt{next\_super\_ch},
we can enumerate quickly  the seqence $(N, \log N)$ of
superchampions and of their logarithms.
If, in the same time, we
enumerate the values $k(\ell(N))$ (cf. \eqref{kn}),
by using lemma \ref{boundslice} we get good estimates
of $\log h(n)$ and $\log g(n)$ on the intervals
$[\ell(N),\,\ell(N')]$, for values of $\ell(N)$ up to $\nu_0$.

\begin{lem}\label{boundslice}
  Let $N_1, N_2$ be two consecutives superchampion numbers.
  Let us define  $n_1=\ell(N_1)$,   $k_1=k(n_1)$, $m_1=n_1-\sigma_{k_1}$,
 $n_2=\ell(N_2)$, $k_2=k(n_2)$, $m_2=n_2-\sigma_{k_2}$
 and $q$ as the smalllest prime not smaller than  $p_{k+1} - m_1$.
 Then
 \begin{equation}\label{sliceminh}
  \log h(n_1) \ge  \theta(p_{k_1+1})  - \log q
\end{equation}

\begin{equation}\label{slicemajh}
  \log h(n_2) \le \theta(p_{k_2+1}) - \log (p_{k_2}-m_2)
\end{equation}
\end{lem}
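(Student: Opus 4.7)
I would prove the two bounds separately, the first by producing an explicit competitor and the second by a comparison argument with $N_{k_2+1}$.

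For \eqref{sliceminh}, the natural candidate is $M=N_{k_1+1}/q$. Since $p_{k_1+1}$ is itself a prime no smaller than $p_{k_1+1}-m_1$, the minimality of $q$ forces $q\le p_{k_1+1}$, so $q$ divides $N_{k_1+1}$ and $M$ is a squarefree divisor of $N_{k_1+1}$. A direct calculation gives $\ell(M)=\sigma_{k_1+1}-q\le \sigma_{k_1+1}-(p_{k_1+1}-m_1)=\sigma_{k_1}+m_1=n_1$, whence $M\le h(n_1)$ by the definition \eqref{h} of $h$, and $\log h(n_1)\ge \log N_{k_1+1}-\log q=\theta(p_{k_1+1})-\log q$.

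For \eqref{slicemajh}, I would show uniformly that every squarefree $M$ with $\ell(M)\le n_2$ satisfies $M(p_{k_2}-m_2)\le N_{k_2+1}$. The plan is to compare $M$ with $N_{k_2+1}=h(\sigma_{k_2+1})$ (cf. \eqref{hsigk}) through the decomposition $M=AB$, where $A=\gcd(M,N_{k_2+1})$ and $B$ collects the prime factors of $M$ exceeding $p_{k_2+1}$. Setting $D=N_{k_2+1}/A$, we have $MD=BN_{k_2+1}$, and the hypothesis $\ell(M)\le n_2=\sigma_{k_2+1}-(p_{k_2+1}-m_2)$ yields the additive inequality $\ell(D)-\ell(B)\ge p_{k_2+1}-m_2$. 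The target bound is then equivalent to the multiplicative inequality $D/B\ge p_{k_2}-m_2$.

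The hard part is passing from the additive to the multiplicative inequality. When $B=1$, it is immediate: for any squarefree integer $D\ge 2$ one has the classical bound $D\ge \ell(D)$, so $D\ge p_{k_2+1}-m_2>p_{k_2}-m_2$. When $B\ne 1$, combining $\ell(D)-\ell(B)\ge p_{k_2+1}-m_2$ with $\ell(D)\le \sigma_{k_2+1}$ and the lower bound $\ell(B)>(\text{number of prime factors of }B)\cdot p_{k_2+1}$ bounds the number of prime factors of $B$, after which a short exchange-style case analysis on the primes appearing in $D$ versus $B$ (in the spirit of the superchampion arguments of Section~\ref{parSupCh}) establishes the multiplicative bound. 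This additive-to-multiplicative balance is the technical crux of the proof.
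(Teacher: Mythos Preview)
Your argument for \eqref{sliceminh} is correct and self-contained: the competitor $M=N_{k_1+1}/q$ does the job exactly as you describe. The paper obtains this bound by citing \cite[Proposition~8]{DNZ} applied to $G(p_{k_1},m_1)$ via the decomposition $h(n_1)=N_{k_1}G(p_{k_1},m_1)$ of \eqref{hNG}; your direct construction is essentially the same inequality unpacked.

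For \eqref{slicemajh}, however, there is a genuine gap. Your reduction to the implication
\[
\ell(D)-\ell(B)\ge p_{k_2+1}-m_2 \quad\Longrightarrow\quad D/B\ge p_{k_2}-m_2
\]
(with the prime factors of $D$ all $\le p_{k_2+1}$ and those of $B$ all $>p_{k_2+1}$) is correct, and the case $B=1$ is fine. But the case $B\ne 1$ is not settled by ``a short exchange-style case analysis'': the additive constraint does not transfer to the multiplicative one by any routine manipulation. To minimise $D/B$ under the additive constraint one must simultaneously make $D$ as small as possible for a given $\ell(D)$ (which pushes toward using \emph{few large} primes $\le p_{k_2+1}$) and $B$ as large as possible for a given $\ell(B)$ (which pushes toward using \emph{many} primes just above $p_{k_2+1}$), and controlling this trade-off requires real work on the structure of the optimiser. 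Nothing in Section~\ref{parSupCh} supplies this; the relevant structural information lives in the analysis of $G(p_k,m)$ carried out in \cite{DNZ}. The paper accordingly does not argue directly but invokes \cite[Proposition~8]{DNZ} applied to $G(p_{k_2},m_2)$, which encodes precisely the upper bound $G(p_{k_2},m_2)\le p_{k_2+1}/(p_{k_2+1}-m_2)$ needed here. If you want a self-contained proof, you should either reproduce that proposition's argument or restrict to $M=h(n_2)$ and exploit the known shape of the optimiser from \cite[\S8]{DNh1}, rather than attempting the bound for arbitrary squarefree $M$.
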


\begin{proof}
The lower bound for $h(n_1) = N_{k_1} G(p_{k_1},m_1)$  (cf. \eqref {hNG})
 comes from  \cite[Proposition 8]{DNZ}), applied to $G(p_{k_1},m_1)$,
 and the upper bound for $h(n_2)$ from the same proposition
applied to $G(p_{k_2},m_2)$.
 \end{proof}

   \subsubsection{A dichotomic algorithm}\label{pardicho}
  We recall the algorithm \texttt{ok\_rec(n1,   n2)},
  presented  in (\cite[Section 4.9]{DNh3}),  which we
  will use several times. 
  Let us suppose that \texttt{ok(n)} is a  boolean function
  with the following side effect: when it returns false, before
  returning, it prints \emph{n does not satisfy property ok}.
 We suppose that we also have at our disposal
  a boolean function \texttt{good\_interval(n1, n2)}
  such that, when it returns true, the property $ok(n)$
  is satisfied by all $n \in  \undeux$; in other words
  $good\_intervall(n_1, n_2)$ is a sufficient condition
  (most often not necessary)
  ensuring that $ok(n)$ is true on $\undeux$.

Then the procedure \texttt{ok\_rec(n1,   n2)} returns true if and only $ok(n)$ is true
for every $n \in \undeux$, and,  when it return false, before
returning,  it prints the value  of  the largest $n$
in $[n_1, n_2]$ which does not satisfy \texttt{ok(n)}.

This procedure is used in Sections \ref{parproof172} (Theorem \ref{thmhgMm}),
\ref{parproofthmgshii} (Theorem \ref{thmgsh} (ii) and (iii)),
\ref{parproofthmgshv} (Theorem \ref{thmgsh} (v))  and \ref {thhmgnHR3}
(Theorem  \ref{thmgnHR} (iii)).

 %%%%%%%%%%%%%%%%%%%%%%%%%%%%%%%%%%%%%%%%%%%%%%%%%%%%
\section{Proof of Theorem \ref{thmhgMm}}\label{parProofthmhgMm}

 \subsection{Estimates of $\xi$ in terms of $n$}\label{xin} 

\begin{lem}\label{lemiter}
Let $n$ and $\de$ be two numbers satisfying $n > e^6=403.42\ldots$
and $1\le \de\le 2$. Let us define $f:[\sqrt n,n] \lr \R$ by
\[f(t)=f_{n,\de}(t)=\sqrt{n(2\log t-\de)}.\]

(i) $f([\sqrt n,n])$ is included in $(\sqrt n,n)$.

(ii) $f$ is increasing and $f'(t) \le 1/2$ holds.

(iii) The equation $t=f(t)$ has a unique root $R=R(n,\de)$ in $(\sqrt
n,n)$. If $R<t\le n$, then we have $R<f(t)< t$ while, if $\sqrt n
\le t < R$, $R>f(t) >t$ holds.
\end{lem}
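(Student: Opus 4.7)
The plan is to dispatch the three parts in order, since each builds on the previous one. The key algebraic input is the identity $f'(t) = n/(t\,f(t))$, and the numerical hypothesis $n > e^6$ is tuned precisely so that $f'$ stays below $1/2$.

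For (i), I would first check that $f$ is well-defined on $[\sqrt n, n]$: since $\delta \le 2$ and $n > e^6$, one has $2\log t - \delta \ge \log n - 2 > 0$ for $t \ge \sqrt n$. Assuming the monotonicity of $f$ proved in (ii), the minimum of $f$ on $[\sqrt n, n]$ is $f(\sqrt n) = \sqrt{n(\log n - \delta)}$, and the inequality $f(\sqrt n) > \sqrt n$ reduces to $\log n > \delta + 1$, which is ensured by $\log n > 6$ and $\delta \le 2$. The upper bound $f(n) < n$ is equivalent to $2\log n - \delta < n$, which follows from the elementary inequality $2\log n < n$ valid for all $n \ge 1$.

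For (ii), I would compute $f'(t) = n/(t\,f(t))$, which is positive, so $f$ is increasing. To bound $f'(t) \le 1/2$, I note that both $t$ and $f(t)$ are increasing on $[\sqrt n,n]$, whence
\[
t\,f(t) \ge \sqrt n \cdot f(\sqrt n) = n\sqrt{\log n - \delta}.
\]
This is at least $2n$ exactly when $\log n \ge \delta + 4$, and $\log n > 6 \ge \delta + 4$ is the reason for the hypothesis $n > e^6$.

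For (iii), I would set $\phi(t) = f(t) - t$ on $[\sqrt n, n]$. Part (i) gives $\phi(\sqrt n) > 0$ and $\phi(n) < 0$, while part (ii) yields $\phi'(t) = f'(t) - 1 \le -1/2 < 0$, so $\phi$ is strictly decreasing and admits a unique zero $R \in (\sqrt n, n)$. For $R < t \le n$, strict decrease gives $\phi(t) < 0$, i.e.\ $f(t) < t$, and the monotonicity of $f$ gives $f(t) > f(R) = R$. The symmetric argument on $[\sqrt n, R)$ yields $t < f(t) < R$. There is no real obstacle here; the only thing to be careful about is tracking why the constant $e^6$ is exactly the right threshold, which is what ensures the contraction estimate $f'\le 1/2$ needed both for uniqueness in (iii) and, implicitly, for the iteration scheme used later to estimate $\xi$ in terms of $n$.
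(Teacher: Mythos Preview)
Your proof is correct and follows essentially the same route as the paper: checking the endpoint values in (i), computing $f'(t)=\dfrac{\sqrt n}{t\sqrt{2\log t-\delta}}=\dfrac{n}{t\,f(t)}$ and bounding it at $t=\sqrt n$ using $\log n-\delta\ge 4$ in (ii), and applying the strict decrease of $\phi(t)=f(t)-t$ together with the monotonicity of $f$ in (iii). The only cosmetic difference is that you package the derivative as $n/(t f(t))$ and bound $f(n)<n$ via $2\log n<n$, whereas the paper writes the derivative out and uses $\log n\le n/e$; these are equivalent.
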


\begin{proof}
we have $f(\sqrt n)=\sqrt{n(\log n-\de)}\ge \sqrt{n(\log n-2)}\ge
\sqrt{4n}> \sqrt n$. On the other hand, we have $f(n)=\sqrt{n(2\log
  n-\de)} < \sqrt{2n\log n}$ and by using the inequality $\log n \le
  n/e$, $f(n)\le (\sqrt{2/e})n$, which completes the proof of (i).

The derivative $f'(t)=\frac{\sqrt n}{t\sqrt{2\log t-\de}}$ is clearly
  positive and we have
\[f'(t) \le \frac{\sqrt n}{\sqrt n\sqrt{\log n-2}} \le
  \frac{1}{\sqrt{\log(e^6)-2}}=\frac12,\]
which proves (ii).

From (ii), the derivative of $t\mapsto f(t)-t$ is negative while, from
(i), $f(t)-t$ is positive for $t=\sqrt n$ and negative for $t=n$,
whence the existence of the root $R$ and for $\sqrt n \le t \le n$,
the equivalences
\[ f(t)< t \;\; \Llr \;\; t >f(t) > R \qtx{and} f(t) > t\;\; \Llr\;\; t <
f(t) <R,\]
which proves (iii).
\end{proof}

Let us recall that $k=k(n)$ is defined by \eqref{kn} and let us set 
\begin{equation}\label{x(n)}
x=x(n)=p_{k+1}
\end{equation}
so that 
\begin{equation}\label{npi1x}
\pi_1(x)-x \le n < \pi_1(x)
\end{equation}
holds. Further, $n\ge 7$ being given, one defines $N'$ and $N''$ by
Definition \ref{nsuperch}, 
$\rho=\rho(n)$ is the common parameter of $N'$ and $N''$
(cf. Proposition \ref{propNrho}) and $\xi=\xi(n)$ is defined by
$\rho=\xi/\log \xi$. If $p_{i_0}$ denotes the largest prime factor of $N'$,
from \eqref{pi0p1>xi}, we have $p_{i_0} < \xi \le p_{i_0+1}$ and, from \eqref{Nrho},
$\ell(N')\ge \pi_1(p_{i_0})$. Therefore, from \eqref{N'n} and
\eqref{npi1x}, we get
\[\pi_1(p_{i_0})\le \ell(N') \le n <  \pi_1(x) =\pi_1(p_{k+1}), \]
which implies $p_{i_0} < p_{k+1}$, $p_{i_0+1} \le  p_{k+1}$ and from \eqref{pi0p1>xi},
\begin{equation}\label{xix}
\xi=\xi(n) \le p_{i_0+1} \le p_{k+1}=x=x(n).
\end{equation}
Note that  in $N''$ (cf. \eqref{Nrhop}), from Corollary \ref{coropjxi}, all
prime powers dividing $N''$ do not exceed $\xi$, so that
$n < \ell(N'')\le \xi \pi(\xi) \le \xi^2$ and, with \eqref{xix},
\begin{equation}\label{lognxi}
\log n \le 2\log \xi\le 2\log x.
\end{equation}

Proposition \ref{propxin} improves on Lemma 2.8 of \cite{DNh3}.
\begin{prop}\label{propxin}
For $n \ge \nu_0$ (defined by \eqref{n0}), 
\begin{equation}
  \label{xinm}
  \sqrt{n\log n}\left(1+\frac{\log \log n-1}{2\log n} -
\frac{(\log \log n)^2}{8\log^2 n}+0.38\,\frac{\log \log n}{\log^2
  n}\right)\le \xi\le x
\end{equation}
while, for $n\ge \pi_1(x_0)= 2\,220\,822\,442\,581\,729\,257 = 2.22\ldots 10^{18}$,
\begin{equation}
  \label{xinM}
\xi \le x\le \sqrt{n\log n}\left(1+\frac{\log \log n-1}{2\log n} -
\frac{13\, (\log \log n)^2}{10000\,\log^2 n}\right).
\end{equation}
\end{prop}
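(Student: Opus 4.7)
The plan is to prove the two non-trivial inequalities separately, both by inverting a relation of the form $n\asymp\frac{t^{2}}{2\log t}$ using the fixed-point machinery of Lemma \ref{lemiter}.

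For the upper bound on $x$ in \eqref{xinM}, I would start from the left inequality $\pi_{1}(x)-x\le n$ of \eqref{npi1x}. Since $n\ge\nu_{0}$ forces $x\ge\xi\ge x_{0}$, the effective lower bound \eqref{pi1min} of Corollary \ref{coropi1} applies and gives
\[
\frac{x^{2}}{2\log x}\left(1+\frac{1}{2\log x}+\frac{1}{2\log^{2} x}+\frac{3}{10\log^{3} x}\right)\le n+x.
\]
Dividing by $x^{2}/(2\log x)$ and using that $x\ll n$ for large $n$, I would rearrange this into an inequality of the shape $x\le f_{n,\delta_{n}}(x)$, where $\delta_{n}$ is slightly less than $1$ and explicitly controlled. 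Lemma \ref{lemiter}(iii) then gives $x\le R(n,\delta_{n})$, and a final Newton-type expansion using $\log x=\tfrac{1}{2}\log n+\tfrac{1}{2}\log\log n+o(1)$ delivers \eqref{xinM}, the tiny coefficient $13/10000$ reflecting the explicit size of $\delta_{n}-1$ when $n\ge\pi_{1}(x_{0})$.

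For the lower bound on $\xi$ in \eqref{xinm}, the starting point is $n<n''=\ell(N'')$, combined with \eqref{ellN'N''xi} and \eqref{ellN'E}:
\[
n<\ell(N')+\xi\le\pi_{1}(\xi)+E(N')+\xi.
\]
Since $\xi\ge x_{0}$ by \eqref{ngreatern0}, I would insert the upper bound \eqref{pi1maj} for $\pi_{1}(\xi)$ and the upper bound \eqref{EN} for $E(N')$, yielding
\[
n<\frac{\xi^{2}}{2\log\xi}\left(1+\frac{1}{2\log\xi}+\frac{1}{2\log^{2}\xi}+\frac{107}{80\log^{3}\xi}\right)+\frac{\xi^{3/2}}{3\sqrt{2}\,\log\xi}\left(1+\frac{0.98}{\log\xi}\right)+\xi.
\]
The last two terms are of strictly lower order than the main term $\xi^{2}/(2\log\xi)$, so after rearrangement I get $\xi^{2}\ge n(2\log\xi-\delta'_{n})$ for some effectively controlled $\delta'_{n}$ slightly larger than $1$. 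This reads $\xi\ge f_{n,\delta'_{n}}(\xi)$, so Lemma \ref{lemiter}(iii) gives $\xi\ge R(n,\delta'_{n})$, and the same asymptotic expansion as above produces \eqref{xinm}. The explicit constant $0.38\,\log\log n/\log^{2}n$ arises from tracking the $\xi^{3/2}/\log\xi$ contribution of $E(N')$ together with the higher-order remainder in \eqref{pi1maj}.

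The main obstacle is the explicit bookkeeping: one must propagate several layers of estimates (the $\pi_{1}$-expansion, the additive-excess bound, the fixed-point inversion, and the substitution $\log\xi=\tfrac{1}{2}\log n+\tfrac{1}{2}\log\log n+O(\log\log n/\log n)$) while keeping every $O(1/\log\xi)$ term quantitative. The hypothesis $n\ge\nu_{0}$, which implies $\log\xi\ge\log x_{0}=23.02\ldots$, is what makes each error numerically small, and the announced constants $13/10000$ and $0.38$ emerge precisely from this accounting.
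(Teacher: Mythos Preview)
Your approach is essentially the paper's: for the upper bound you start from $\pi_1(x)-x\le n$ and \eqref{pi1min}, for the lower bound from $n<\ell(N')+\xi\le\pi_1(\xi)+E(N')+\xi$ together with \eqref{pi1maj} and \eqref{EN}, then in both cases you compress the resulting four-term expansion into an inequality of the form $n\gtrless\xi^{2}/(2\log\xi-\delta)$ and invoke Lemma~\ref{lemiter}. The paper does exactly this, choosing an explicit test point ($t_{1}$ or $t_{2}$) on the correct side of the fixed point $R$ and expanding $f(t_{i})$; your ``Newton-type expansion'' is the same step.

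One small correction: in the upper-bound half you write that $\delta_{n}$ is \emph{slightly less} than $1$. In fact the compression of \eqref{pi1min} (after absorbing the $-x$ into the $\log^{4}x$ term) produces $n\ge x^{2}/(2\log x-1-0.492/\log x)$, hence $\delta=1+b/\log n$ with $b>0$; it is precisely this excess $\delta-1>0$ that yields the small negative correction $-\tfrac{13}{10000}(\log\log n)^{2}/\log^{2}n$ in \eqref{xinM}. With $\delta<1$ the same iteration would only give the cruder bound $x\le\sqrt{n\log n}\,(1+(\lambda-1)/(2L))$. The analogous point for the lower bound you have stated correctly ($\delta'_{n}>1$, and the smaller this excess the better the constant $0.38$).
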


\begin{proof}
{\bf The lower bound \eqref{xinm}.} 
First, from the definition of $\nu_0$ (cf. \eqref{n0}), $\xi$, $N'$ and $N''$ 
(cf. Definition \ref{nsuperch}), it follows that $n\ge \nu_0$ implies
$\xi \ge x_0$ and that, from \eqref{ellN'N''} and \eqref{N''xiN'},
\[n< \ell(N'')=\ell(N') +\ell(N'')-\ell(N')=\ell(N')+\rho \log(N''/N')
  \le \ell(N')+\xi\]
which, from  \eqref{ellN'E} and \eqref{EN}, yields
\begin{eqnarray}
  \label{xnxi1}
  n &\le& \ell(N')+\xi=\sum_{p<\xi} p +E(N')+\xi \le \pi_1(\xi)+\xi+
\frac{\xi^{3/2}}{3\sqrt 2 \log \xi}\left(1+\frac{0.98}{\log
      \xi}\right)\notag\\
&=& \pi_1(\xi)+\frac{\xi^2}{\log^4 \xi}\left(\frac{\log^4
    \xi}{\xi}+\frac{\log^3 \xi}{3\sqrt 2 \sqrt
    \xi}\left(1+\frac{0.98}{\log \xi}\right)\right) \notag\\
&\le& \pi_1(\xi)+\frac{\xi^2}{\log^4 \xi}\left(\frac{\log^4
    x_0}{x_0}+\frac{\log^3 x_0}{3\sqrt 2 \sqrt
    x_0}\left(1+\frac{0.98}{\log x_0}\right)\right) \notag\\
&\le&  \pi_1(\xi)+0.0301\frac{\xi^2}{\log^4 \xi}.
\end{eqnarray}
Further, as $107/160+0.0301 < 7/10$ holds, it follows from
\eqref{pi1maj} and \eqref{xnxi1}, that
\begin{equation}\label{n7/10}
 n\le 
\frac{\xi^2}{2\log \xi}+\frac{\xi 2}{4\log^2 \xi}+
\frac{\xi^2}{4\log^3 \xi}+\frac{7\;\xi^2}{10 \log^4 \xi}.
\end{equation}
Let us consider the polynomial
\begin{align*}
  P&=\left(\frac t2+\frac{t^2}{4} +\frac{t^3}{4}+\frac{7\,t^4}{10}\right)
\left(\frac 2t    -1-0.584\,t\right)\\
&=1-0.042\,t^2+1.004\,t^3-0.846\,t^4-0.4088\,t^5.
\end{align*}
The polynomial $P-1$ has a double root in $0$ and three other roots $-2.92\ldots,\\
0.0434574\ldots$ and $0.809\ldots$. Therefore, $P\le 1$ holds for $0\le
t \le 1/\log x_0=0.0434294\ldots$ and
\eqref{n7/10} implies
\begin{equation}\label{n584}
n\le \frac{\xi^2}{2\log \xi-1-0.584/\log \xi} \qtx{for} \xi \ge x_0.
\end{equation} 
Therefore, from \eqref{lognxi}, \eqref{n584} yields
\[n \le \frac{\xi^2}{2\log \xi-1-1.168/\log n},\]
which implies
\begin{equation}
  \label{xifxi}
\xi \ge  f(\xi) \qtx{with} f(t)=f_{n,\de}(t)=\sqrt{n(2\log t-\de)}  
\end{equation}
with
\begin{equation}\label{fdelta}
1 < \de=1+\frac{1.168}{\log n}\le 1+\frac{1.168}{\log \nu_0}\le 1.03.
\end{equation}
From Lemma \ref{lemiter} (ii), the equation $t=f(t)$ 
has one root $R\in (\sqrt n,n)$.  Let us set 
\begin{equation}\label{t1x}
t_1= \sqrt{n\log n\left(1+\frac{\log \log n-\de}{\log n}\right)}.
\end{equation}
We have $t_1\in (\sqrt n,n)$ and
\begin{equation}\label{ft1}
f(t_1)=\sqrt{n(\log n+\log \log n +\log(1+u)-\de)} \qtx{with}
u=\frac{\log \log n-\de}{\log n}.
\end{equation}
As $n\ge \nu_0$ holds, $u$ is positive and 
$f(t_1)^2-t_1^2=n\log(1+u) > 0$ so that $f(t_1) > t_1$ and, 
from Lemma \ref{lemiter} (iii), the root $R$ satisfies $R > f(t_1)$.
But, from \eqref{xifxi},  $\xi \ge f(\xi)$, which implies
\begin{equation}\label{xiR}
\xi \ge  R \ge f(t_1). 
\end{equation}
By Taylor formula, since the third derivative 
of $t\mapsto \log(1+t)$ is positive, we have $\log(1+u)\geq u-u^2/2$.
For convenience, from now on, we write $L$ for $\log n$,  $\la$ for
$\log n$, $L_0$ for $\log \nu_0=42.244 414\ldots$, and $\la_0$ for $\log\log
\nu_0=3.743 472\ldots$ With \eqref{fab}, 
\[\frac{u^2}{2} =\frac{(\la-\de)^2}{2L^2}\leq
\frac{\la-1}{2L}\left(\frac{\la-\de}{L}\right)\leq
\frac{\la_0-1}{2L_0}\left(\frac{\la-\de}{L}\right)\leq 0.04\ \frac{\la-\de}{L}\]
and 
\[\log(1+u)\ge u-\frac{u^2}{2} \ge \frac{\la-\de}{L}-0.04\, 
\frac{\la-\de}{L}=0.96\, \frac{\la-\de}{L},\]
which, from \eqref{xiR} and \eqref{ft1}, yields
\begin{equation}
  \label{x>log1+v}
  \xi \ge f(t_1)= \sqrt{n(\log n)(1+v)}
\end{equation}
with
\begin{equation}
  \label{v}
\frac{\la-\de}{L}\left(1+\frac{0.96}{L}\right) 
\le v = \frac{\la-\de}{L}+\frac{\log(1+u)}{L}
 \le \frac{\la-\de}{L} + \frac{ u}{L}
= \frac{\la-\de}{L}\left(1+\frac{1}{L}\right).
\end{equation}
For $v>0$, by Taylor formula, since the third derivative 
of $t\mapsto \sqrt{1+t}$ is positive, we have $\sqrt{1+v} \ge
1+v/2-v^2/8$ and we need an upper bound for $v^2/8$. From
\eqref{fdelta}, we get
\begin{align}
\frac{v^2}{8} &= \frac{(\la-\de)^2}{8L^2}\left(1+\frac{1}{L}\right)^2
  = \frac{(\la-\de)^2}{8L^2}\left(1+\frac{2}{L}\left(1+\frac{1}{2L}\right)\right) \notag \\
&\le \frac{(\la-\de)^2}{8L^2}\left(1+\frac{2}{L}\left(1+\frac{1}{2L_0}\right)\right)\notag \\
&\leq \frac{(\la-\de)^2}{8L^2}\left(1+\frac{2.03}{L}\right)
 \le \frac{(\la-1)^2}{8L^2}+ \frac{2.03\,\la^2}{8\,L^3}\notag \\
&= \frac{\la^2-2\la+1}{8L^2}+\frac{\la}{8L^2}\left(\frac{2.03\;\la}{L}\right) 
   \le \frac{\la^2}{8L^2}-\frac{2\la}{8L^2}+\frac{\la}{8\la_0L^2}
   +\frac{\la}{8L^2}\left(\frac{2.03\;\la_0}{L_0}\right) \notag \\
&\le\frac{\la^2}{8L^2}+\frac{\la}{8L^2}(-2+0.27+0.18)
   \le \frac{\la^2}{8L^2}-0.19\frac{\la}{L^2}.
      \label{v2sur8}
\end{align}
Finally, from \eqref{x>log1+v}, \eqref{v}, \eqref{v2sur8} and \eqref{fdelta},
\begin{eqnarray}
  \label{xmino}
\frac{\xi}{\sqrt{n\log n}} &\ge& \sqrt{1+v}  \ge
1+\frac v2-\frac{v^2}{8}\notag\\
&\ge& 1+\frac{\la-\de}{2L}+0.48 \frac{\la-\de}{L^2}-
  \frac{\la^2}{8L^2}+0.19\frac{\la}{L^2}\notag\\
&\ge& 1+\frac{\la-1}{2L}-\frac{1.168}{2L^2}+0.48\, \frac{\la-1.03}{L^2}-
  \frac{\la^2}{8L^2}+0.19\frac{\la}{L^2}\notag\\
&\ge& 1+\frac{\la-1}{2L}-\frac{\la^2}{8L^2}
  +0.67 \,\frac{\la}{L^2}-\frac{(0.584+0.48\times 1.03)\la}{L^2\la_0}\notag\\
&\ge& 1+\frac{\la-1}{2L}-\frac{\la^2}{8L^2}+\frac{\la}{L^2}(0.67-0.29)\notag \\
&=& 1+\frac{\la-1}{2L}-\frac{\la^2}{8L^2}+0.38\,\frac{\la}{L^2},
\end{eqnarray}
which proves \eqref{xinm}.

\bfni{The upper bound \eqref{xinM}.}  We assume $x\ge x_0=10^{10}+19$. As
\[\frac{3x^2}{20\log^4 x}-x =\frac{x^2}{\log^4 x}
  \left(\frac{3}{20}-\frac{\log^4 x}{x}\right) \ge
\frac{x^2}{\log^4 x}\left(\frac{3}{20}-\frac{\log^4 x_0}{x_0}\right)
\ge 0.149\, \frac{x^2}{\log^4 x},\]
  \eqref{pi1min} and \eqref{npi1x} imply
\begin{equation}\label{n149}
n\ge \pi_1(x)-x \ge \frac{x^2}{2\log x}+\frac{x^2}{4\log^2 x}
+\frac{x^2}{4\log^3 x}+0.149\,\frac{x^2}{\log^4 x}.
\end{equation}
Let us set
\begin{multline*}
  Q=\left(\frac t2+\frac{t^2}{4}+\frac{t^3}{4}+0.149\,
      t^4\right)\left(\frac 2t -1-0.492\,t\right)\\
=1+0.004\,t^2-0.075\,t^3-0.272\,t^4-0.073308\,t^5.
\end{multline*}
The polynomial $Q-1$ has a double root in $0$ and three other roots
\[
  -3.4052\ldots,\
  -0.3508\ldots,\
  0.04567\ldots,
  \]
and  $Q\ge 1$ holds for $0\le
t \le 1/\log x_0=0.0434294\ldots$,which, from \eqref{n149}, proves
\begin{equation*}
n\ge \frac{x^2}{2\log x-1-0.492/\log x} \qtx{for} x \ge x_0.
\end{equation*} 
Further, \eqref{n149} implies $n\ge x^2/(2\log x)$, whence
\begin{multline*}
\log n \ge 2\log x-\log(2\log x)=( \log x)\left(2-\frac{\log (2
      \log x)}{\log x}\right)\\
\ge ( \log x)\left(2-\frac{\log (2\log x_0)}{\log x_0}\right)=
1.8336\ldots\log x
\end{multline*}
and, as $1.8336\times 0.492 \ge 0.902$, 
\begin{equation}\label{n902}
n\ge \frac{x^2}{2\log x-1-0.902/\log n} \qtx{for} n \ge \nu_0
\end{equation}
and 
\begin{equation}\label{xfx2}
x\le f(x) \text{ with } f(t)=f_{n,\de}(t)=\sqrt{n(2\log t-\de)},\
  b=0.902, \ \de=1+\frac{b}{\log n}.   %\le 1.022.
\end{equation}
This time, one chooses
\[t_2=A\sqrt{n\log n} \qtx{with} A=1+\frac{\la-1}{2L}
\le 1+\frac{\la_0-1}{2L_0} \le 1.033 \]
and one calculates
\[f(t_2)=\sqrt{B\ n\log n}\]
with
\[B=1+\frac{\la-\de}{L}+\frac 2L\log\left(1+\frac{\la-1}{2L}\right) \le
B'=1+\frac{\la-\de}{L}+\frac{\la-1}{L^2}.\]
We have
\[A^2-B\ge A^2-B'=\frac{1}{4L^2}(\la^2-6\la+5+4b)=
\frac{1}{4L^2}(\la^2-6\la+8.608)\]
and 
\[\la^2-6\la+8.608=0.011\la^2+(0.989\la^2-6\la+8.608).\]
The roots of the above trinomial are $2.327\ldots$ and $3.738\ldots<
\la_0$ so that it is positive for $\la \ge \la_0$ and one gets
$A^2-B\ge 0.011\la^2/(4L^2)$, $A> \sqrt B$ and
\begin{equation}\label{ARB}
A-\sqrt B=\frac{A^2-B}{A+\sqrt B}
\ge \frac{A^2-B}{2A}\ge \frac{0.011\la^2}{4\times 2.066\,L^2}
\ge 0.0013\frac{\la^2}{L^2},
\end{equation}
so that $t_2 > f(t_2)$ holds. By Lemma \ref{lemiter}, the root $R$
of the equation $t=f(t)$ satisfies $R <  f(t_2)$ and \eqref{xfx2}
implies
\begin{multline*}
  x \le  f(t_2) = \sqrt{B\ n\log n}=\sqrt{n\log n}(A-(A-\sqrt B))
  \\ \le \sqrt{n\log n}(A-0.0013\la^2/L^2)
\end{multline*}
which proves \eqref{xinM}.
\end{proof} 

\begin{coro}\label{corominxi}
For $n\ge \nu_0$,
\begin{equation}\label{minxicoro}
\sqrt{n\log n}\left(1+\frac{\log \log n-1.019}{2\log n} \right)\le \xi \le x
  \le \sqrt{n\log n}\left(1+\frac{\log \log n-1}{2\log n}\right).
\end{equation}
  \end{coro}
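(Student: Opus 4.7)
The plan is to deduce both inequalities of Corollary \ref{corominxi} directly from Proposition \ref{propxin}, using the shorthand $L=\log n$ and $\la=\log\log n$.

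\textbf{Upper bound.} First, one checks that $\nu_0 = 2\,220\,832\,950\ldots$ exceeds $\pi_1(x_0)=2\,220\,822\,442\ldots$, so for $n\ge \nu_0$ the hypothesis of \eqref{xinM} is satisfied. Since the term $-\frac{13\la^2}{10000\,L^2}$ in \eqref{xinM} is nonpositive, simply dropping it yields
\[
\xi\le x \le \sqrt{n\log n}\left(1+\frac{\la-1}{2L}\right),
\]
which is the upper bound of \eqref{minxicoro}.

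\textbf{Lower bound.} Starting from \eqref{xinm}, it suffices to show
\[
\frac{\la-1}{2L}-\frac{\la^2}{8L^2}+\frac{0.38\,\la}{L^2}\ge\frac{\la-1.019}{2L}.
\]
Rearranging, this is equivalent to $\la(\la-3.04)\le 0.076\, L$. Since $L=e^{\la}$, setting $g(\la):=\la(\la-3.04)e^{-\la}$ the problem reduces to checking $g(\la)\le 0.076$ on $[\la_0,+\infty)$, where $\la_0=\log\log\nu_0=3.7434\ldots$.

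A direct computation gives $g'(\la)=e^{-\la}\bigl(-\la^2+5.04\,\la-3.04\bigr)$, whose only root in $(\la_0,+\infty)$ is $\la^{*}=\tfrac{1}{2}(5.04+\sqrt{13.2416})=4.3395\ldots$, and $g$ increases on $[\la_0,\la^{*}]$ then decreases. Hence
\[
\max_{\la\ge\la_0} g(\la)=g(\la^{*})=\frac{\la^{*}(\la^{*}-3.04)}{e^{\la^{*}}}=0.0736\ldots<0.076,
\]
which establishes the required inequality.

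\textbf{Anticipated difficulties.} There are essentially none; the corollary is an algebraic tightening of Proposition \ref{propxin} controlled by a one-variable calculus estimate. The only points requiring care are the numerical comparison $\nu_0>\pi_1(x_0)$ (needed to invoke \eqref{xinM}) and confirming that the maximum of $g$ on $[\la_0,\infty)$ stays below the slack constant $0.076$; both reduce to short explicit computations.
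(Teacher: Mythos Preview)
Your proof is correct and essentially identical to the paper's. The paper rewrites the lower bound from \eqref{xinm} as $\xi\ge\sqrt{n\log n}\bigl(1+\tfrac{\la-y(\la)}{2L}\bigr)$ with $y(t)=1+(t^2/4-0.76t)e^{-t}$ and shows $y(t)\le 1.01838\ldots$ for $t\ge\la_0$; your function $g(\la)=\la(\la-3.04)e^{-\la}$ is exactly $4(y(\la)-1)$, so the critical points $0.7005\ldots$ and $4.339\ldots$ and the resulting bound coincide.
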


\begin{proof}
The upper bound follows from \eqref{xinM}. From \eqref{xinm},
\begin{multline*}
  \xi\ge \sqrt{n\log n}\left(1+\frac{\log \log n-y(\log \log n)}{2\log
      n} \right)
  \\\qtx{with} y(t)=1+(t^2/4-0.76\,t)\exp(-t).
\end{multline*}
The derivative $y'(t)=(-0.25\,t^2+1.26\,t-0.76)\exp(-t)$ vanishes for $t=0.7005\ldots$
and $t=4.339\ldots$ so that, for $t\ge \la_0$, $y(t)$ is maximal for
$t=4.339\ldots$ and its value is $1.01838\ldots$.
\end{proof}

\subsection{Proof of the lower bound \eqref{minhn} for $n \ge \pi_1(x_0)$.}\label{parproof171}
Let us recall that $x_0=10^{10}+19$, and let us suppose first that 
$n \ge \pi_1(x_0) = 2\,220\,822\,442\,581\,729\,257$, so that $x=x(n)=p_{k+1}$
defined by \eqref{x(n)} is $\ge x_0$. As  the function $h$ is
nondecreasing, from \eqref{dusart3} with $\al=1/2$, 
\begin{equation}
  \label{h>th}
\log h(n) \ge \log N_{k} =\theta(p_{k})=\theta(x)-\log x\ge x-
\frac{x}{2\log^3 x}-\log x.  
\end{equation}

Inequality \eqref{h>th} together with \eqref{fab} and \eqref{lognxi}
yield, by noting $L$ for $\log n$, $\la$ for $\log \log n$, $L'_0$ for
$\log \pi_1(x_0)$, $\la'_0$ for $\log \log \pi_1(x_0)$,
\begin{eqnarray*}
  %\label{h>th1}
\frac{\log h(n)}{x} &\geq& 1-\frac{1}{2\log^3 x}-\frac{\log x}{x}
= 1-\left(\frac 12+\frac{\log^4 x}{x}\right)\frac{1}{\log^3 x}\notag\\
&\ge& 1-\left(\frac 12+\frac{\log^4 x_0}{x_0}\right)\frac{1}{\log^3 x}
\ge  1-\frac{0.500029}{\log^3 x}\notag\\
&\ge&   1-\frac{4.0003}{\log^3 n}
\ge  1  -\frac{4.0003\, \la}{(\log^2 n) \la'_0 L'_0}
\ge 1 -\frac{0.026\,\la}{L^2}
\end{eqnarray*}
and, from \eqref{xinm},
\begin{eqnarray}
  \label{hnmino}
 \frac{\log h(n)}{\sqrt{n\log n}}&\ge&   
\left(1+\frac{\la-1}{2L}-\frac{\la^2}{8L^2}+\frac{0.38\,\la}{L^2}\right)
\left(1-\frac{0.026\ \la}{L^2}\right) \notag\\
&\ge& 1+\frac{\la-1}{2L}-\frac{\la^2}{8L^2}+\frac{0.38\,\la}{L^2}
-\frac{0.026\ \la}{L^2}\left(1+\frac{\la}{2L}+\frac{0.38\,\la}{L^2}\right)\notag\\
&\ge& 1+\frac{\la-1}{2L}-\frac{\la^2}{8L^2}+\frac{0.38\,\la}{L^2}
-\frac{0.026\ \la}{L^2}\left(1+\frac{\la'_0}{2L'_0}+\frac{0.38\,\la'_0}{L_0^{'2}}\right)\notag\\
&\ge& 1+\frac{\la-1}{2L}-\frac{\la^2}{8L^2}+\frac{0.35\,\la}{L^2},
\end{eqnarray}
which proves \eqref{minhn} for $x\ge \pi_1(x_0)$.
\qed

\subsection{Proof of the lower bound \eqref{minhn}\label{parproof172} 
for $n < \pi_1(x_0)$.}
Let $\Phi_{u}$  defined by \eqref{PHIu} and $n_1 \le n_2$ such that  the following inequality
\begin{equation}\label{okrec1.7}
\log h(n_1) \ge \Phi_{1/8}(n_2) 
\end{equation}
is true.  Then,  by the non decreasingness of $h$ and $\Phi_{1/8}$,
$\log h(n) \ge \Phi_{1/8}(n)$ is true on the whole interval $\undeux$.
In particular, \eqref{minhn} is satisfyed on $[\sk, \sigma_{k+1}]$
if the
following inequality is true
\begin{equation}\label{goodk}
\theta(p_k) = \log h(\sigma_k) > \Phi_{1/8}(\sigma_{k+1}),  
\end{equation}
By enumerating $p_k$, $\sk$ and $\theta_k$  until
$p_{k+1} = x_0=10^{10}+19$ 
we remark that \eqref{goodk} is satisfyied for $k  \ge k_1 = 9\,018$.
This proves that inequality \eqref{minhn} is true for $n \ge \si_{k_1}
=398\,898\,277$.

It remains to compute the largest $n$ in $[2, \sigma_{k_1}]$ such that
$\Phi_{1/8}(n) \le \log h(n)$ fails.
This is done by dichotomy (cf. Section \ref{pardicho}), calling \texttt{ok\_rec(2, 398 898 277)}
with  $ok(n)$ which returns true if and and only if $\log h(n) \ge \Phi_{1/8}(n)$ and
$good\_interval(n_1, n_2)$ which returns true if and only if
\eqref{okrec1.7} is true.
This gives the largest $n$ in $[2, 398\,898\,277]$,  which does not satisfy
\eqref{minhn}, $ n=373\,623\,862$, and this call of \texttt{ok\_rec}
computes  $3577$ values of $good\_interval$ and $2$ values of $ok(n)$.
\qed

\subsection{Proof of the upper bound \eqref{maxgn} \label{parproof173}
for $n \ge \nu_0$.}
For $n\ge \nu_0$ (defined by \eqref{n0}), one defines $N'$, $N''$ and
$\xi$ by Definition \ref{nsuperch}.  
The inequalities $\xi \ge x_0$ and $N''\le \xi N'$ hold (cf. \eqref{N''xiN'}). 
From \eqref{N'n}, \eqref{ellN'E*} and \eqref{EN*},
\begin{align*}
\log g(n) &\le \log N''=\log N'+\log \frac{N''}{N'}
=\sum_{p < \xi}\log p+E^*(N')+\log \frac{N''}{N'}\\
&\le \theta(\xi)+0.72\sqrt \xi +\log \xi.
\end{align*}
Further, from \eqref{minxicoro}, with our notation $L=\log n, \la=\log
L$, $L_0=\log \nu_0,\la_0=\log L_0$,
\begin{multline}
  \label{logg1}
 \log g(n)\le \theta(\xi)+\sqrt \xi \left(0.72+\frac{\log \xi}{\sqrt \xi} \right)
\le \theta(\xi)+\sqrt \xi \left(0.72+\frac{\log x_0}{\sqrt{x_0}} \right)\\
\le \theta(\xi)+0.73\sqrt \xi 
\le \theta(\xi)+0.73(n\log n)^{1/4} \left(1+\frac{\la-1}{4L}\right)\\
\le \theta(\xi)+0.73(n\log n)^{1/4}\left(1+\frac{\la_0-1}{4L_0}\right)
\le \theta(\xi)+0.75(n\log n)^{1/4}.
\end{multline}
Now, we consider two cases, according to $\xi \le 10^{19}$ or not.
\begin{itemize}
\item
 If $x_0 \le \xi \le 10^{19}$, then \eqref{logg1}, \eqref{thx<x} and
  \eqref{xinM} imply
  \begin{align*}
\log g(n) &\le \xi+0.75(n\log n)^{1/4}
= \xi+\frac{\la^2}{L^2}\sqrt{n\log n}
\left(\frac{0.75L^{7/4}}{n^{1/4}\la^2}\right)\\
&\le \xi+\frac{\la^2}{L^2}\sqrt{n\log n}
\left(\frac{0.75L_0^{7/4}}{\nu_0^{1/4}\la_0^2}\right)\\
&\le \sqrt{n\log n}\left(1+\frac{\la-1}{2L}-\frac{\la^2}{L^2}
\left(\frac{13}{10^4}-10^{-3}\right)\right)\\
&= \sqrt{n\log n}
\left(1+\frac{\la-1}{2L}-\frac{3\la^2}{10^4L^2}\right)
\end{align*}
which proves \eqref{maxgn} for $x_0\le \xi \le 10^{19}$.

\item If $\xi > x_6=10^{19}$, then from \eqref{Nrho}, \eqref{Nrhop}
and 
\eqref{pi1min},
\begin{multline*}
  n\ge  \ell(N') \ge \pi_1(\xi)-\xi \ge \xi^2/(2\log \xi) - \xi
  \ge x_6^2/(2\log x_6)-x_6 \\
  \ge \nu_1 \stackrel{def}{=\!=} 10^{36}.
\end{multline*}
From \eqref{xinM}, by setting $L_1=\log \nu_1=82.89\ldots$, $\la_1=\log L_1=4.41\ldots$,
\[\xi\le  \sqrt{n\log n}\left(1+\frac{\la-1}{2L}\right)
\le \sqrt{n\log n}\left(1+\frac{\la_1-1}{2L_1}\right) \le 1.021 \sqrt{n\log n},\]
and, from \eqref{dusart3} with $\al=0.15$ and \eqref{lognxi},
\begin{multline}
  \label{logg2}
 \theta(\xi)-\xi \le \frac{0.15\,\xi}{\log^3 \xi}\le
   \frac{1.2\,\xi}{\log^3 n}\le \frac{1.2\times
     1.021\,\la^2}{L^2}\sqrt{n\log n}\frac{1}{L\la^2}\\
\le \frac{1.23\,\la^2}{L^2}\sqrt{n\log n}\frac{1}{L_1\la_1^2}
\le \frac{8\,\la^2}{10^4\,L^2} \sqrt{n\log n}.
\end{multline}
We also have
\begin{multline}
  \label{logg3}
 0.75(n\log n)^{1/4}=\frac{\la^2}{L^2}\sqrt{n\log n}
\left(\frac{0.75\,L^{7/4}}{n^{1/4}\la^2}\right)\\
\le \frac{\la^2}{L^2}\sqrt{n\log n}
\left(\frac{0.75\,L_1^{7/4}}{\nu_1^{1/4}\la_1^2}\right)
\le \frac{9\,\la^2}{10^8\,L^2}\sqrt{n\log n}.
\end{multline}
Finally, from \eqref{xinM}, \eqref{logg1}, \eqref{logg2} and \eqref{logg3},
\begin{align}\label{minzn}
\log g(n) &\le \theta(\xi)+0.75(n\log n)^{1/4}
\le \xi+\sqrt{n\log n} \,\frac{\la^2}{L^2}
  \left(\frac{8}{10^4}+\frac{9}{10^8}\right)  \notag\\
&\le \sqrt{n\log n}\left(1+\frac{\la-1}{2L}-\frac{\la^2}{L^2}
  \left(\frac{13-8-0.0009}{10^4}\right)\right)\notag\\
&\le \sqrt{n\log n}
  \left(1+\frac{\la-1}{2L}-\frac{4\,\la^2}{10^4\,L^2}\right),
\end{align}
which completes the proof of \eqref{maxgn} for $n \ge \nu_0$.
\end{itemize}
\qed

\subsection{Proof of the upper bound  \eqref{maxgn}
for $n < \nu_0$.}\label{parproof174}
The inequality \eqref{maxgn} for $4\le n < \nu_0$ will follow from the
lemma:

\begin{lem}\label{lemzn}
For $4\le n \le \nu_0$, $z_n$ defined by \eqref{zn} satisfies
\begin{multline}\label{n545}
z_6=3.18\ldots \ge z_n \ge z_{\nu_2}=0.005\,455\,048\,036\ldots >0\\
\qtx{with} \nu_2=6\,473\,549\,497\,145\,122.
\end{multline}  
\end{lem}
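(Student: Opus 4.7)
My plan is to establish both bounds through an enumeration of the superchampion numbers $N$ with $\ell(N)\le \nu_0$, computing $z_{\ell(N)}$ at each such $N$, and then extending these pointwise values to the whole range $[4,\nu_0]$ by the convexity argument of Lemma \ref{lemanzn}(ii) for the lower bound, and by the monotonicity of $g$ for the upper bound.

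For the lower bound $z_n\ge z_{\nu_2}$: Lemma \ref{lemanzn}(ii) says that if $n'=\ell(N')\ge 19$ and both $z_{n'},z_{n''}$ lie in $[0,e]$, then $z_n\ge \min(z_{n'},z_{n''})$ on the whole interval $[\ell(N'),\ell(N'')]$ between consecutive superchampion numbers. Using the fast generator \texttt{next\_super\_ch} of Section \ref{generate}, I would run through the roughly $4.55\cdot 10^8$ superchampions $N$ with $12\le N\le N'_0$, compute $z_{\ell(N)}$ directly from $\log g(\ell(N))=\log N$ via \eqref{NgN}, and check at every consecutive pair that the hypothesis $z_{\ell(N')},z_{\ell(N'')}\in[0,e]$ of Lemma \ref{lemanzn}(ii) is satisfied. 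The claim is that the running minimum attains its smallest value $0.005\,455\,048\ldots$ exactly at $\ell(N)=\nu_2=6\,473\,549\,497\,145\,122$; the handful of integers with $4\le n<19$ is treated by direct tabulation of $g(n)$.

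For the upper bound $z_n\le z_6$: here convexity goes the wrong way, so I would use the nondecreasingness of $g$. For $n\in[\ell(N'),\ell(N''))$ we have $\log g(n)\ge \log N'$, which gives the explicit upper bound
\[
z_n \;\le\; \frac{\log^2 n}{(\log\log n)^2\sqrt{n\log n}}\bigl(\Psi(n)-\log N'\bigr), \quad \Psi(n)=\sqrt{n\log n}\Bigl(1+\frac{\log\log n-1}{2\log n}\Bigr),
\]
and I would verify that this majorant stays $\le z_6=3.18\ldots$ on every superchampion interval up to $\nu_0$. A cleaner alternative is to invoke \eqref{minhn} of Theorem \ref{thmhgMm}, whose proof in Sections \ref{parproof171}--\ref{parproof172} does \emph{not} depend on the present lemma: that inequality yields $z_n\le 1/8<z_6$ for $n\ge 373\,623\,863$, so that only the initial range remains, in which the maximum $z_6=3.18\ldots$ is identified by direct computation.

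The main obstacle is purely computational, namely traversing all superchampion pairs up to $N'_0$ (about $4.55\cdot 10^8$ of them) while maintaining enough numerical precision to discriminate $z$-values differing by as little as $10^{-3}$ near the minimum. The dichotomy procedure \texttt{ok\_rec} of Section \ref{pardicho}, applied to the boolean property ``$z_n\ge z_{\nu_2}$'' with \texttt{good\_interval}$(n_1,n_2)$ encoding the sufficient condition supplied by Lemma \ref{lemanzn}(ii) on a superchampion interval, both localises the extremal index $\nu_2$ and certifies the bound everywhere else.
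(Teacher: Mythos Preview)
Your lower-bound argument is exactly the paper's: enumerate the superchampions $N$ with $19\le\ell(N)\le\nu_0$, observe that the computed values $z_{\ell(N)}$ all lie in $[z_{\nu_2},z_{19}]\subset[0,e]$ so that the hypothesis of Lemma~\ref{lemanzn}(ii) is met, and conclude $z_n\ge z_{\nu_2}$ on every superchampion interval; the range $4\le n\le 18$ is handled by direct tabulation, just as you propose.

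Your treatment of the upper bound is actually more careful than the paper's. The paper only records that the maximum of $z_{\ell(N)}$ over superchampion points with $\ell(N)\ge 19$ is $z_{19}=1.53\ldots$ (this is needed anyway to verify the hypothesis $z_{\ell(N)}\le e$ of Lemma~\ref{lemanzn}(ii)) and that direct computation on $4\le n\le 18$ gives the maximum $z_6$; it does not separately justify $z_n\le z_6$ at non-superchampion $n\ge 19$, where Lemma~\ref{lemanzn}(ii) says nothing about upper bounds. Your appeal to \eqref{minhn}, whose proof in Sections~\ref{parproof171}--\ref{parproof172} is indeed independent of the present lemma, cleanly closes this: $\log g(n)\ge\log h(n)\ge\Phi_{1/8}(n)$ gives $z_n\le 1/8<z_6$ for $n\ge 373\,623\,863$, and the remaining finite range is covered by your first majorant or by direct computation. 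This upper bound is in any case not used downstream (only $z_n>0$ matters for \eqref{maxgn}), which may explain the paper's brevity.
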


\begin{proof}
For $4\le n \le 18$, we calculate $z_n$ and obtain
$z_{12}=1.73\ldots \le z_n \le z_6=3.18\ldots$
For $n\ge 19$, we compute $z_{\ell(N)}$
for all superchampion numbers $N$ satisfying $19\le \ell(N) \le \nu_0$. 
The minimum is attained in $\nu_2$ and the maximum is $z_{19}=1.53\ldots$,
which, by applying Lemma \ref{lemanzn} (ii), completes the proof of \eqref{n545}.
We have $z_2=-2.05\ldots$ and $z_3=-2.38\ldots$ It is possible that
$z_n \ge z_{\nu_2}$ holds for all $n\ge 4$ but we have not been able to
prove it. We have only proved, from \eqref{minzn} and \eqref{n545},
that $z_n \ge 0.0004$ holds for $n \ge 4$. 
\end{proof}

%%%%%%%%%%%%%%%%%%%%%%%%%%%%%%%%%%%%%%%%%%%%%%%%
\section{Study of  $g(n)/h(n)$ for large $n$'s }\label{parLargen}

\subsection{Effective estimates of $\log g(n) -\log h(n)$}
\label{parEffEstgsh}

\begin{prop}\label{propgshlarge}
If $n\ge \nu_0$ (defined by \eqref{n0}), we have
\begin{multline}
  \label{gshn}
\frac{\sqrt 2}{3}(n\log n)^{1/4}\left(1+
  \frac{\log \log n-11.6}{4\log n} \right)\\
\le
\log \frac{g(n)}{h(n)}\le \frac{\sqrt 2}{3}(n\log n)^{1/4}
\left(1+\frac{\log \log n+2.43}{4\log n}\right) .
\end{multline}
\end{prop}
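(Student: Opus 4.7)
The plan is to sandwich both $\log g(n)$ and $\log h(n)$ between quantities expressible in terms of the superchampion data $N'$, $\xi$ attached to $n$ by Definition~\ref{nsuperch}, and then use the excess estimates of Propositions~\ref{propEN*} and~\ref{propsn} (together with the refinements \eqref{sm1>} and \eqref{sp1log}) to evaluate their difference. Throughout, the hypothesis $n \ge \nu_0$ is used only via $\xi \ge x_0$.

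The first step is to identify the index $k = k(n)$ from \eqref{kn} with $i_0 + s(n)$, where $p_{i_0}$ is the largest prime below $\xi$ (so that \eqref{pi0p1>xi} holds) and $s = s(n)$ is defined by \eqref{defs}. This is immediate: rewriting \eqref{defs} via \eqref{ellN'E} and the identity $\ell(N') = \pi_1(p_{i_0}) + E(N')$ yields $\pi_1(p_{i_0+s}) \le n < \pi_1(p_{i_0+s+1})$, which, compared with $\sigma_k = \pi_1(p_k) \le n < \pi_1(p_{k+1}) = \sigma_{k+1}$, forces $k = i_0 + s$.

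With this in hand, \eqref{N'n}, \eqref{N''xiN'} and \eqref{ellN'E*} give the bracketing
\[
\theta^-(\xi) + E^*(N') \;\le\; \log g(n) \;\le\; \theta^-(\xi) + E^*(N') + \log\xi,
\]
while the monotonicity of $h$ (which follows directly from \eqref{h}) combined with $h(\sigma_j) = N_j$ from \eqref{hsigk} and the identification $k = i_0+s$ gives
\[
\theta^-(\xi) + \sum_{j=1}^{s}\log p_{i_0+j} \;\le\; \log h(n) \;\le\; \theta^-(\xi) + \sum_{j=1}^{s+1}\log p_{i_0+j}.
\]
Subtracting, and using $p_{i_0+j} \ge \xi$ from \eqref{pi0p1>xi} on the upper-bound side, reduces the proposition to
\[
E^*(N') - (s+1)\log p_{i_0+s+1} \;\le\; \log\frac{g(n)}{h(n)} \;\le\; E^*(N') - (s-1)\log\xi.
\]
Inserting Proposition~\ref{propEN*} for $E^*(N')$, the lower bound \eqref{sm1>} for $(s-1)\log\xi$, and the upper bound \eqref{sp1log} for $(s+1)\log p_{i_0+s+1}$, each one-sided estimate collapses to a constant multiple of $\sqrt{\xi/2}\cdot\tfrac{2}{3}=\tfrac{\sqrt 2}{3}\sqrt\xi$ times a factor of the form $1 \pm C/\log\xi$, with $C$ independent of $n$.

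The final task is to convert these $\sqrt\xi/\log\xi$ estimates into $(n\log n)^{1/4}/\log n$ estimates via Corollary~\ref{corominxi}: taking square roots and using $\sqrt{1+y}\le 1+y/2$ gives $\sqrt\xi = (n\log n)^{1/4}\bigl(1 + (\log\log n - 1)/(4\log n) + O((\log\log n)^2/\log^2 n)\bigr)$ from \eqref{minxicoro}, while $\log\xi = \tfrac12\log n + O(\log\log n)$ gives $1/\log\xi \le (2/\log n)\bigl(1 + O(\log\log n/\log n)\bigr)$. The main obstacle is quantitative rather than conceptual: each additive unit of $C/\log\xi$ becomes roughly $8C$ units inside $(\log\log n + \text{const})/(4\log n)$, and the announced constants $+2.43$ and $-11.6$ are essentially tight (the leading upper contribution being $\tfrac{3}{2}(0.305-\tfrac{1}{3}\cdot 0.0724)$ and the leading lower contribution $\tfrac{3}{2}(0.521+0.345)$), so the second-order corrections coming from $\sqrt{1+y}\le 1+y/2$ and from replacing $1/\log\xi$ by a factor slightly above $2/\log n$ must be tracked carefully to fit within them.
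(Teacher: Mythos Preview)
Your proposal is correct and follows essentially the same route as the paper. You bracket $\log g(n)$ and $\log h(n)$ separately (by $\theta^-(\xi)+E^*(N')$ and by $\theta^-(\xi)+\sum\log p_{i_0+j}$) where the paper works directly with the ratio via $N'/N_{i_0+s+1}\le g(n)/h(n)\le \xi N'/N_{i_0+s}$, but after subtracting you land on exactly the same key inequalities $E^*(N')-(s+1)\log p_{i_0+s+1}\le \log(g(n)/h(n))\le E^*(N')-(s-1)\log\xi$ (the paper's \eqref{gsh2}--\eqref{gsh3}); from there the paper inserts Proposition~\ref{propEN*}, \eqref{sp1log}, \eqref{sm1>} and converts via \eqref{lognxi} and Corollary~\ref{corominxi} precisely as you outline, and your numerical check of the leading contributions $\tfrac32(0.305-0.0724/3)=0.4213$ and $\tfrac32(0.521+0.345)=1.299$ matches the paper's intermediate constants.
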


\begin{proof}
  For $n\ge \nu_0$, we consider the two superchampion numbers $N'$ and
  $N''$ and $\xi$ defined in Definition \ref{nsuperch}. From
  \eqref{N''xiN'}, we have $N''\le \xi N'$ and from \eqref{N'n}, $N'\le
  g(n) < N''$.

In view of estimating $h(n)$, we need the value of $k=k(n)$ defined by
\eqref{kn}. For that, we have to convert the additive excess $E(N')$
(cf. \eqref{E}) in large primes. More precisely, if $p_{i_0}$ denotes the largest
prime factor of $N'$ and $\si_{i_0}=\sum_{p\le p_{i_0}} p$ (cf. \eqref{Nj}, 
from \eqref{Nrhop} and \eqref{pi0p1>xi}, we have 
$\sum_{p\mid N'} p=\sum_{p<\xi} p=\si_{i_0}$ and from \eqref{ellN'E}, 
$\ell(N')-E(N')=\si_{i_0}$ so that, from the definition \eqref{defs} of
$s=s(n)$,
\begin{equation}
  \label{sigi0n}
  \si_{i_0+s}\le n < \si_{i_0+s+1}
\end{equation}
and, from \eqref{kn}, $k=k(n)=i_0+s$. As $h$ is nondecreasing on $n$, from
\eqref{hsigk}, one deduces
$$h(\si_{i_0+s})=N_{i_0+s}\le h(n) \le N_{i_0+s+1}=h(\si_{i_0+s+1})$$
and
\begin{equation}
  \label{gsh1}
  \frac{N'}{N_{i_0+s+1}}\le \frac{g(n)}{h(n)}\le
    \frac{N''}{N_{i_0+s}}\le \frac{\xi N'}{N_{i_0+s}}.
\end{equation}

\bfni{The lower bound.}  Observing from \eqref{ellN'E*} that 
$\log N'=\sum_{p\mid N'} \log p+E^*(N')=\log N_{i_0}+E^*(N')$, from \eqref{gsh1},
\eqref{EN*} and \eqref{sp1log}, one gets
\begin{multline}\label{gsh2}
\log\frac{g(n)}{h(n)} \ge
  \log \frac{N'}{N_{i_0+s+1}}=E^*(N')-\sum_{i=i_0+1}^{i_0+s+1} \log p_i \ge 
E^*(N')-(s+1)\log p_{i_0+s+1}\\
\ge \sqrt{\frac{\xi}{2}}\left(1-\frac{0.521}{\log \xi}-\frac 13
      -\frac{0.345}{\log \xi}\right)=
\frac{\sqrt{2\xi}}{3}\left(1-\frac{1.299}{\log \xi}\right).
\end{multline}
Now, as the third derivative of $u\mapsto \sqrt{1+u}$ is positive,
from Taylor's formula and  Corollary \ref{corominxi}, 
\begin{equation}
  \label{sqrxin}
  \sqrt \xi \ge (n\log n)^{1/4}\left(1+\frac u2-\frac{u^2}{8}\right)
\qtx{with} u=\frac{\log\log n-1.019}{2\log n}.
\end{equation}
By writing $L$ for $\log n$, $\la$ for $\log \log n$, $L_0$ for $\log
\nu_0$ and $\la_0$ for $\log\log \nu_0$, from \eqref{fab},
\[\frac{u^2}{8}=\frac{(\la-1.019)^2}{4\times 8 \,L^2}\le
\frac{(\la_0-1.019)^2}{4\times 8\,L_0 L}\le \frac{0.022}{4L},\]
\[1+\frac u2-\frac{u^2}{8}\ge 1+\frac{\la-1.019}{4L}-\frac{0.022}{4L}=
 1+\frac{\la-1.041}{4L}\]
so that, from \eqref{sqrxin}, one gets
$\sqrt \xi \ge (n\log n)^{1/4}(1+(\la-1.041)/(4L))$. Further, from \eqref{lognxi},
 $1.299/\log \xi < 10.392/(4L)$ holds and \eqref{gsh2}
implies 
\begin{align*}
  \log\frac{g(n)}{h(n)} &\ge \frac{\sqrt 2}{3}(n\log n)^{1/4}
\left(1+\frac{\la-1.041}{4L}\right)  \left(1-\frac{10.392}{4L}\right)\\
&\ge \frac{\sqrt 2}{3}(n\log n)^{1/4}
\left(1+\frac{\la-11.433}{4L}-\frac{10.392(\la_0-1.041)}{(4L_0)(4L)}\right)  \\
&\ge \frac{\sqrt 2}{3}(n\log n)^{1/4}\left(1+\frac{\la-11.6}{4L}\right)
\end{align*}
which proves the lower bound of \eqref{gshn}.

\bfni{The upper bound.} Similarly, from \eqref{gsh1}, 
\eqref{pi0p1>xi}, \eqref{EN*} and \eqref{sm1>}, we have
\begin{multline}\label{gsh3}
  \log\frac{g(n)}{h(n)} \le
  \log \frac{\xi N'}{N_{i_0+s}}=\log \xi + E^*(N')-\sum_{i=i_0+1}^{i_0+s} \log p_i \le 
E^*(N')-(s-1)\log \xi\\
\le \sqrt{\frac{\xi}{2}}\left(1+\frac{0.305}{\log \xi}-\frac 13
      -\frac{0.0724}{3 \log \xi}\right)=
\frac{\sqrt{2\xi}}{3}\left(1+\frac{0.4213}{\log \xi}\right).
\end{multline}
Further, by using the inequality $\sqrt{1+t}\le 1+t/2$, it follows
from \eqref{minxicoro} and \eqref{lognxi} that
\begin{align*}
\log\frac{g(n)}{h(n)} &\le \frac{\sqrt 2}{3}(n\log n)^{1/4}
\left(1+\frac{\la-1}{4L}\right)  \left(1+\frac{3.3704}{4L}\right)\\
&\le \frac{\sqrt 2}{3}(n\log n)^{1/4}
\left(1+\frac{\la+2.3704}{4L}+\frac{3.3704(\la_0-1)}{(4L_0)(4L)}\right)  \\
&\le \frac{\sqrt 2}{3}(n\log n)^{1/4}\left(1+\frac{\la+2.43}{4L}\right)
\end{align*}
which ends the proof of Proposition \ref{propgshlarge}.
\end{proof}

\subsection{Asymptotic expansion of $\log g(n)-\log h(n)$}\label{parasy}

\begin{prop}\label{propasy}
Let $n$ be an integer tending to infinity. $N',N''$, $\xi$  and
$\xi_2$ are
defined by Definition \ref{nsuperch}.
Then, for any real number $K$, when $n$ and $\xi$ tend to infinity, we have
\begin{equation}
  \label{asyxi}
\log \frac{g(n)}{h(n)}=\left( \xi_2-\li(\xi_2^3)\frac{\log
    \xi}{\xi}\right)\left(1+\co_K\left(\frac{1}{\log^K \xi}\right)\right).
\end{equation}
\end{prop}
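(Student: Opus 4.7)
The plan is to combine the effective bounds from Proposition \ref{propgshlarge} with finer asymptotics for the two ingredients $E^*(N')$ and $T_n:=\sum_{i=i_0+1}^{i_0+s}\log p_i$, and then to identify the result with the closed-form expression $\xi_2-\li(\xi_2^3)\log\xi/\xi$. First I would start from the sandwich already obtained in the proof of Proposition \ref{propgshlarge} (cf.\ \eqref{gsh2}--\eqref{gsh3}),
$$E^*(N')-\sum_{i=i_0+1}^{i_0+s+1}\log p_i \;\le\; \log\frac{g(n)}{h(n)} \;\le\; E^*(N')-T_n+\log\xi,$$
whose two sides differ only by $O(\log\xi)$. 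Since the main term will have order $\sqrt{2\xi}/3$, this discrepancy is absorbed in the target error $O_K(\sqrt{\xi}/\log^K\xi)$, so it suffices to expand $E^*(N')-T_n$.

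Next I would expand $E^*(N')$ via \eqref{E*N1}. The contribution of $j\ge 3$ is bounded by $J\,\theta(\xi_3)\ll(\log\xi)\,\xi^{1/3}$ thanks to \eqref{xk1sk} and \eqref{J}, hence is $O_K(\xi_2/\log^K\xi)$ because $\xi_2\sim\sqrt{\xi/2}$; the principal term $\theta^-(\xi_2)=\xi_2\bigl(1+O_K(1/\log^K\xi)\bigr)$ comes from the unconditional PNT with a de la Vall\'ee Poussin error, which beats every negative power of $\log$. Hence $E^*(N')=\xi_2\bigl(1+O_K(1/\log^K\xi)\bigr)$.

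For $T_n$, the bounds \eqref{pi0p1>xi} and \eqref{pi0psp1<} on $p_{i_0}$ and $p_{i_0+s+1}$ imply $\log p_i=(\log\xi)\bigl(1+O(1/\log^2\xi)\bigr)$ uniformly in the range of summation, so $T_n=s\log\xi\bigl(1+O_K(1/\log^K\xi)\bigr)$. The definition \eqref{defs} of $s$, combined with $0\le n-\ell(N')<\xi$ and the near-equality of $p_{i_0+s+1}$ with $\xi$, yields $s\,\xi=E(N')+O(\xi)$. Splitting $E(N')$ via \eqref{EN1}, the $j\ge 3$ terms are $\sum_{j\ge 3}\pi_j(\xi_j)=O(\xi^{4/3}/\log\xi)$ using \eqref{majpi3}--\eqref{majpi6} and \eqref{xk1sk}, while the $-\pi_1^-(\xi_2)$-type piece of the $j=2$ term is $O(\xi/\log\xi)$; both are $O_K(\pi_2(\xi_2)/\log^K\xi)$ because $\pi_2(\xi_2)\asymp\xi^{3/2}/\log\xi$ by Corollary \ref{coropi2xi2}. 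Therefore $E(N')=\pi_2(\xi_2)\bigl(1+O_K(1/\log^K\xi)\bigr)$ and consequently $T_n=\pi_2(\xi_2)\,(\log\xi/\xi)\bigl(1+O_K(1/\log^K\xi)\bigr)$.

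The last step is the identification $\pi_2(x)=\li(x^3)\bigl(1+O_K(1/\log^K x)\bigr)$, which follows from partial summation applied to the PNT, or equivalently from the substitution $t=u^3$ in $\li(x^3)=\int du/\log u=\int t^2/\log t\,dt$. Applied at $x=\xi_2$ and inserted above, this gives $\log(g(n)/h(n))=\xi_2-\li(\xi_2^3)\log\xi/\xi+O_K(\sqrt{\xi}/\log^K\xi)$; since the main term has order $\sqrt{2\xi}/3$, this additive error converts into the multiplicative error claimed in \eqref{asyxi}. The main obstacle is bookkeeping in the third paragraph: the $O(\xi)$ slop coming from $n-\ell(N')$ in $s\xi=E(N')+O(\xi)$, together with the several $O(\xi/\log\xi)$ lower-order pieces of $E(N')$, must all be shown to be $O_K(1/\log^K\xi)$ relative to $\pi_2(\xi_2)\asymp\xi^{3/2}/\log\xi$, but this only requires a saving of $1/\sqrt{\xi}$, which is of course $O_K(1/\log^K\xi)$.
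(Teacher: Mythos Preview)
Your approach is essentially the same as the paper's and the overall structure is sound, but there is one genuine gap. In your treatment of $T_n$ you write that \eqref{pi0p1>xi} and \eqref{pi0psp1<} give $\log p_i=(\log\xi)\bigl(1+O(1/\log^2\xi)\bigr)$ uniformly, ``so $T_n=s\log\xi\bigl(1+O_K(1/\log^K\xi)\bigr)$''. This ``so'' does not follow: the effective inequality \eqref{pi0psp1<} is derived from Corollary~\ref{coropixy}, which is Lemma~\ref{lempixy} applied with the \emph{fixed} exponent $K=2$; it yields at best $T_n=s\log\xi\bigl(1+O(1/\log^3\xi)\bigr)$. Propagating that through your argument gives only $\log(g(n)/h(n))=(\xi_2-\li(\xi_2^3)\log\xi/\xi)\bigl(1+O(1/\log^3\xi)\bigr)$, not the claim for every $K$. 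The same issue recurs implicitly when you write $s\xi=E(N')+O(\xi)$: to turn this into $s=(E(N')/\xi)(1+O_K(1/\log^K\xi))$ you again need $p_{i_0+s+1}=\xi\bigl(1+O_K(1/\log^K\xi)\bigr)$, not merely the $O(1/\log^2\xi)$ version.

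The fix is exactly what the paper does in \eqref{pi0sp1eq}: apply Lemma~\ref{lempixy} with a \emph{general} exponent $K$ (the hypothesis $|\theta(x)-x|\le \alpha x/\log^{K+1}x$ holds for every $K$ by the unconditional prime number theorem with de la Vall\'ee Poussin error), so that the number of primes in $[\xi,\xi(1+1/\log^K\xi)]$ exceeds $s+1$ for large $\xi$, forcing $p_{i_0+s+1}\le \xi(1+1/\log^K\xi)$. Once you replace the citation of \eqref{pi0psp1<} by this asymptotic argument, your proof goes through and coincides with the paper's.
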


\begin{proof}
The proof follows the lines of the proof of Proposition \ref{propgshlarge}. 
Let $K$ be a real number as large as we wish.
First, from the Prime Number Theorem, for $r \ge 0$, it is easy to deduce
\begin{equation}
  \label{pirasy}
 \pi_r(x)=\sum_{p\le x} p^r =(\li(x^{r+1})) (1+\co(1/\log^K x) ),
 \qquad x\to \iy.
\end{equation}
From Proposition \ref{propx2}, when $\xi\to \iy$, we know that
\begin{equation}
  \label{xi2equiv}
  \xi_2\sim \sqrt{\xi/2}
\end{equation}
and, from Proposition \ref{propsn}, that
\begin{equation}
  \label{sequiv}
  s\sim \frac{\sqrt \xi}{3\sqrt 2 \log \xi}.
\end{equation}
Note that \eqref{sequiv} implies
\begin{equation}\label{spm}
s\pm 1=s(1+\co(1/\log^K \xi) ).
\end{equation}
By using the crude estimate $\pi_j(\xi_j)\le \xi_j^{j+1}$,
from \eqref{EN1}, \eqref{xk1sk} and \eqref{J}, it follows that
\begin{multline}
  \label{ENasy}
E(N')\le \sum_{j=2}^J \pi_j(\xi_j)\le 
\pi_2(\xi_2)+\sum_{j=3}^J \xi_j^{j+1}\le
\pi_2(\xi_2)+\sum_{j=3}^J \xi^{1+1/j}\\
\le
\pi_2(\xi_2)+J\,\xi^{4/3}= \pi_2(\xi_2)+\co(\xi^{4/3}\log \xi).  
\end{multline}
In the same way, from \eqref{ENmin}, one gets 
$E(N')\ge \pi_2(\xi_2)+\co(\xi)$ which, together with   
\eqref{ENasy}, \eqref{pirasy} and \eqref{xi2equiv}  yields
\begin{equation}
  \label{ENxi2}
E(N') =(\li(\xi_2^3)) (1+\co(1/\log^K \xi_2) ) =(\li(\xi_2^3)) (1+\co(1/\log^K \xi) ). 
\end{equation}
From \eqref{E*N1}, similarly, we have
\begin{equation}
  \label{E*Nthxi2}
  E^*(N')=\theta(\xi_2)+\co(J\,\xi_3)=\xi_2 (1+\co(1/\log^K \xi) ).
\end{equation}
It follows from Lemma \ref{lempixy} and \eqref{sequiv}
that the number of primes between
$\xi$ and $\xi(1+1/\log^K \xi )$ satisfies, for $n$ and $\xi$ large enough, 
$\pi(\xi(1+1/\log^K \xi)) -\pi(\xi)>\xi/(2\log^{K+1} \xi) > \sqrt \xi > s+1$, 
which, via \eqref{pi0p1>xi}, \eqref{defs} and \eqref{ellN'N''}, implies
\begin{equation}
  \label{pi0sp1eq}
 \xi \le p_{i_0+1} \le p_{i_0+s+1} \le  \xi (1+\co(1/\log^K \xi) )
\end{equation}
and 
\begin{align*}
  (s-1)\xi &\le p_{i_0+1}+\ldots +p_{i_0+s} -\xi\le
             n-\ell(N')-\xi+E(N') \\
 & \le   \ell(N'')-\ell(N')-\xi+E(N' \le E(N'))\\
&\le n-\ell(N')+E(N') \le p_{i_0+1}+\ldots +p_{i_0+s+1}\\
 &\le (s+1)p_{i_0+s+1} \le (s+1)\,\xi\, (1+\co(1/\log^K \xi) ).
\end{align*}
From \eqref{spm}, it follows that $E(N')=s\xi(1+\co(1/\log^K \xi))$ and,
from \eqref{ENxi2} ,
\begin{equation}
  \label{sxi3}
  s=\frac{\li(\xi_2^3)}{\xi}\left(1+\co\left(\frac{1}{\log^K \xi}\right)\right).
\end{equation}
From \eqref{gsh2} and \eqref{gsh3}, we have
\begin{equation}
  \label{gshasy}
E^*(N')-(s+1)\log p_{i_0+s+1} \le \log \frac{g(n)}{h(n)} \le
E^*(N')-(s-1)\log \xi, 
\end{equation}
which, from \eqref{spm}, \eqref{pi0sp1eq}, \eqref{sxi3} and
\eqref{E*Nthxi2}, proves \eqref{asyxi}.
\end{proof}

%%%%%%%%%%%%%%%%%%%%%%%%%%%%%%%%%%%%%%%%%%%%%%%%%%
\section{Proof of Theorem \ref{thmgsh}}\label{parproofthmgsh}

\subsection{Proof of Theorem \ref{thmgsh} (i).}\label{parproofthmgshi} 

We assume that $n$ tends to infinity. $N',N''$ and $\xi$ are defined
by \eqref{N'n}.
From Proposition \ref{propasy}  one deduces
\begin{equation}
    \label{gshasyxi}
\log \frac{g(n)}{h(n)}\asymp \xi_2-\li(\xi_2^3)\frac{\log \xi}{\xi}
=\frac{\sqrt{2\xi}}{3} F \qtx{with} F=\frac{3}{\sqrt{2\xi}}
\left(\xi_2-\li(\xi_2^3)\frac{\log \xi}{\xi}\right).
\end{equation}
By using \eqref{x2asy} and \eqref{lixinfini}, we get
the asymptotic expansion of $F$ in terms of $t=1/\log \xi$
(cf. \cite{web})
\begin{equation}
    \label{F}
F=F(t)=1-\frac{2+3\log 2}{6}t
-\frac{32+48\log 2+9\log^2 2}{72}t^2 + \ldots
\end{equation}

From %Lemma \ref{lemN'thxi}, 
\eqref{Nrho}, \eqref{N'n},  \eqref{N''xiN'},
\eqref{ellN'E*} an
d \eqref{EN*}, we have
\begin{multline*}
  \theta^-(\xi)\le \log N' \le \log g(n) \le \log N'' \\
  \le \log N' + \log \xi
= \theta^-(\xi)+E^*(N')+\log \xi=\theta(\xi)+\co(\sqrt \xi)
\end{multline*}
so that, from the Prime Number Theorem and \eqref{g=li}, 
for any real number $K$, we have
\begin{equation}
  \label{xisqr}
  \xi=\sqrt{ \limm n}\ (1+\co_K(1/\log^K n))
\end{equation}
that we write $\xi \asymp \sqrt{ \limm n}$.
Therefore, from \eqref{gshasyxi} and \eqref{asygn}, we can get the
asymptotic expansion of $\log(g(n)/h(n))$.
More precisely, we may use Theorem 2 of \cite{Sal} to get
\begin{equation}
  \label{Pj}
 \log \frac{g(n)}{h(n)} \asymp  \frac{ \sqrt 2}{3} (n\log n)^{1/4}
\left(1+\sum_{j\ge 1} \frac{P_j(\log \log n)}{\log^j n} \right)
\end{equation}
where $P_j$ is a polynomial of degree $j$ satisfying the 
induction relation
\begin{equation}
  \label{Pjind}
  \frac{d}{dt} (P_{j+1}(t)-P_j(t))=\left(\frac 14-j\right) P_j(t).
\end{equation}
For that, one sets $y=\log(\limm (n))$, $n=\li(e^y)$, $\xi \asymp e^{y/2}$
and, from \eqref{lixinfini}, $n\asymp \frac{e^y}{y}\sum_{k\ge 0} \frac{k!}{y^k}$
so that, from \eqref{gshasyxi},
\[\log \frac{g(n)}{h(n)} \asymp  \frac{ \sqrt 2}{3}e^{y/4}F(2/y)\]
holds. Finally, we apply the procedure {\sl theorem2\_part2} of
\cite[p. 234]{Sal} with $\al=1,\beta=1/4,\ga=0, G(t)=F(2t)$
(with $F$ defined by \eqref{F}), $d(t)=\sum_{k\ge 0} k! t^k$ and $x=n$.
\qed

The values of the polynomials $P_j$ can be found on the
website \cite{web} (cf. \cite{RobB}  and \cite{MR96} for similar results).

\subsection{Proof of Theorem \ref{thmgsh} (ii) and (iii).}\label{parproofthmgshii}
 
 Let us define $\beta_n$ as the unique number such that
 \begin{equation}\label{defbn}
 \log \frac{g(n)}{h(n)}
   = \frac{\sqrt 2}{3}(n\log n)^{1/4} \left(1+\frac{\log \log n + \beta_n}{4 \log n}\right)
   \end{equation}
 An easy computation gives
 \begin{equation}\label{bnvalue}
 \beta_n = 
 6 \sqrt2 (\log n)^{3/4}  \frac{\log g(n)-\log h(n)}{n ^{1/4}} 
-4\log n  - \log\log n.
\end{equation}

From the non decreasingness of the functions $\log$, $\log\log$, $g$
and $h$ we deduce from \eqref{bnvalue} the following

\begin{lem}
  Let $n_1 \le n_2$ be two integers. Then, for every $n \in \undeux$,
  \begin{equation}\label{majbn}
    \beta_n \le 6\sqrt2 (\log n_2)^{3/4}\
    \frac{\log g(n_2)-\log h(n_1)}{n_1^{1/4}} \
    - 4 \log n_1 - \log\log n_1
 \end{equation}
 and,  if  $g(n_1) \ge h(n_2)$ is satisfied
   \begin{equation}\label{minbn}
     \beta_n \ge 6\sqrt2 (\log n_1)^{3/4} \
     \frac{\log g(n_1)-\log h(n_2)}{n_2^{1/4}}\
     - 4 \log n_2 - \log\log n_2
  \end{equation}
  \end{lem}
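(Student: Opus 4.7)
The plan is to derive both inequalities directly from the explicit formula \eqref{bnvalue} by monotonicity, tracking the sign of each factor carefully.

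First I would decompose
\[
\beta_n = \underbrace{6\sqrt{2}\,(\log n)^{3/4}\,\frac{\log g(n)-\log h(n)}{n^{1/4}}}_{A(n)} \;-\; \underbrace{\bigl(4\log n + \log\log n\bigr)}_{B(n)},
\]
and treat $A(n)$ and $B(n)$ separately. The function $B$ is clearly increasing in $n$, so for $n\in\undeux$ one has $B(n_1)\le B(n)\le B(n_2)$, i.e.\ $-B(n)\le -B(n_1)$ and $-B(n)\ge -B(n_2)$.

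For the upper bound \eqref{majbn}, the point is that the three factors of $A(n)$ can be bounded independently because the numerator $\log g(n)-\log h(n)$ is nonnegative by Theorem \ref{thmgsh}(iv). Since $g$ and $h$ are nondecreasing, $\log g(n)\le \log g(n_2)$ and $-\log h(n)\le -\log h(n_1)$, hence
\[
0\le \log g(n)-\log h(n) \le \log g(n_2)-\log h(n_1).
\]
Combining with $(\log n)^{3/4}\le(\log n_2)^{3/4}$ and $n^{-1/4}\le n_1^{-1/4}$ yields
\[
A(n)\le 6\sqrt{2}\,(\log n_2)^{3/4}\,\frac{\log g(n_2)-\log h(n_1)}{n_1^{1/4}},
\]
and adding $-B(n)\le -B(n_1)$ gives exactly \eqref{majbn}.

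For the lower bound \eqref{minbn}, the same monotonicity arguments run in reverse, but they only preserve inequalities provided the numerator stays nonnegative. Using again that $g$ and $h$ are nondecreasing, $\log g(n)-\log h(n)\ge \log g(n_1)-\log h(n_2)$, and the assumption $g(n_1)\ge h(n_2)$ ensures this lower bound is itself $\ge 0$. Then $(\log n)^{3/4}\ge(\log n_1)^{3/4}$ and $n^{-1/4}\ge n_2^{-1/4}$ give
\[
A(n)\ge 6\sqrt{2}\,(\log n_1)^{3/4}\,\frac{\log g(n_1)-\log h(n_2)}{n_2^{1/4}},
\]
which, combined with $-B(n)\ge -B(n_2)$, is \eqref{minbn}. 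There is no real obstacle here: the only subtle point is recognizing that the sign hypothesis $g(n_1)\ge h(n_2)$ is precisely what is needed to ensure that the separate bounds for $(\log n)^{3/4}$, $n^{-1/4}$ and the difference $\log g(n)-\log h(n)$ can all be combined without a sign reversal.
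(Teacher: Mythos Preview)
Your proof is correct and follows the same approach as the paper, which simply states that the inequalities follow from \eqref{bnvalue} and the nondecreasingness of $\log$, $\log\log$, $g$ and $h$. Your version is more detailed and usefully makes explicit the role of the sign condition $\log g(n)-\log h(n)\ge 0$ (from \eqref{h<g}, equivalently Theorem~\ref{thmgsh}(iv), which follows directly from the definitions \eqref{g} and \eqref{h}) in the upper bound, and of the hypothesis $g(n_1)\ge h(n_2)$ in the lower bound.
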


  The expensive operations in the computation of the bounds given in
  \eqref{majbn} and \eqref{minbn} are the computations of
  $g(n_1), g(n_2), h(n_1), h(n_2)$.  In the particular case where
  $n_1, n_2 = \ell(N_1), \ell(N_2)$ for two consecutive superchampions
  $N_1, N_2$, we will use the following lemma to
  quickly bound $\beta_n$ on the slice $\undeux$.
 
 \begin{lem}
   Let $n_1= \ell(N_1)$,  $n_2= \ell(N_2)$ where $N_1$, $N_2$ are two
   consecutive superchampions, $k_1= k(n_1)$
 (resp. $k_2=k(n_2)$), $m_1= n_1 - \sigma_{k_1}$
 (resp. $m_2= n_2 - \sigma_{k_2}$) and $q$ the  first prime
 not smaller than  $p_{k+1} - m_1$.
 Then, for every $n$ in $[n_1,\, n_2]$,
 \begin{equation}\label{quickmajbetan}
   \beta_n \le 6\sqrt2 (\log n_2)^{3/4}\
    \frac{\log N_2 - \theta(p_{k_1+1}) + \log q}{n_1^{1/4}} \
    - 4 \log n_1 - \log\log n_1
 \end{equation}
 and, if $\log N_1 >  \theta(p_{k_2+1}) - \log (p_{k_2+1} - m_2)$
 \begin{equation}\label{quickminbetan}
   \beta_n \ge
   6\sqrt2 (\log n_1)^{3/4} \
     \frac{\log N_1-\theta({p_{k_2+1}}) + \log (p_{k_2+1} - m_2)}{n_2^{1/4}}\
     - 4 \log n_2 - \log\log n_2.
 \end{equation}
 \end{lem}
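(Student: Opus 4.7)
The plan is to derive the two bounds by combining the previous lemma (inequalities \eqref{majbn} and \eqref{minbn}) with Lemma \ref{boundslice}, simply replacing the computationally expensive quantities $g(n_1), g(n_2), h(n_1), h(n_2)$ by the cheap substitutes that the superchampion structure and the Chebichev function provide.

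First I would observe that since $N_1$ and $N_2$ are $\ell$-superchampion numbers with $n_1 = \ell(N_1)$ and $n_2 = \ell(N_2)$, property \eqref{NgN} gives $g(n_1) = N_1$ and $g(n_2) = N_2$. This rewrites the $\log g$ terms on the right-hand sides of \eqref{majbn} and \eqref{minbn} as $\log N_1$ and $\log N_2$ at no computational cost. Then I would apply Lemma \ref{boundslice}: inequality \eqref{sliceminh} at the left endpoint of the slice gives $\log h(n_1) \ge \theta(p_{k_1+1}) - \log q$, and \eqref{slicemajh} at the right endpoint gives an explicit upper bound on $\log h(n_2)$ in terms of $\theta(p_{k_2+1})$ and $\log(p_{k_2+1}-m_2)$.

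For \eqref{quickmajbetan} I would substitute $\log g(n_2) = \log N_2$ and the lower bound on $\log h(n_1)$ into \eqref{majbn}; since $\log h(n_1)$ enters \eqref{majbn} with a minus sign, replacing it by a smaller quantity can only increase the bracketed numerator, so the direction of the inequality is preserved. For \eqref{quickminbetan}, I would first verify the hypothesis $g(n_1) \ge h(n_2)$ required by \eqref{minbn}: by the assumption $\log N_1 > \theta(p_{k_2+1}) - \log(p_{k_2+1}-m_2)$ combined with the upper bound supplied by \eqref{slicemajh}, one gets $\log g(n_1) = \log N_1 > \log h(n_2)$, so that \eqref{minbn} applies. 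Then I would substitute $\log g(n_1) = \log N_1$ and the upper bound on $\log h(n_2)$ into \eqref{minbn}; here $\log h(n_2)$ enters with a minus sign, so replacing it by a larger quantity only decreases the bracketed numerator, again preserving the direction of the inequality.

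There is no real obstacle beyond this monotonicity bookkeeping: the content of the lemma is not a new estimate but the observation that on a slice delimited by two consecutive superchampions, the four $g/h$ evaluations collapse to quantities ($\log N_1$, $\log N_2$, $\theta(p_{k_1+1})$, $\theta(p_{k_2+1})$, and the small corrections $\log q$ and $\log(p_{k_2+1}-m_2)$) that are already available once one has enumerated the superchampion sequence as in Section \ref{generate}. The only point requiring attention is the side condition $\log N_1 > \theta(p_{k_2+1}) - \log(p_{k_2+1}-m_2)$, which is exactly what is needed to guarantee that the numerator of the lower bound in \eqref{minbn} is nonnegative so that \eqref{minbn} can legitimately be invoked.
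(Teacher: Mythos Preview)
Your proposal is correct and follows exactly the paper's own (very terse) proof: derive \eqref{quickmajbetan} from \eqref{majbn} by using $g(n_2)=N_2$ and \eqref{sliceminh}, and derive \eqref{quickminbetan} from \eqref{minbn} by using $g(n_1)=N_1$ and \eqref{slicemajh}. Your version is actually more complete than the paper's, since you make explicit the monotonicity bookkeeping and you explain why the extra hypothesis $\log N_1 > \theta(p_{k_2+1}) - \log(p_{k_2+1}-m_2)$ is precisely what guarantees the side condition $g(n_1)\ge h(n_2)$ needed for \eqref{minbn}.
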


 \begin{proof}
\suppress{
   by using $\log g(n_1) = \log N_1$ (resp.
  $\log g(n_2) = \log N_2$), and by replacing the value $\log h(n_1)$,
  (resp. $\log h(n_2)$) by the bound given in \eqref{sliceminh}
  (resp. \eqref{slicemajh}),
}
   \begin{itemize}
   \item
    We get \eqref{quickmajbetan} from \eqref{majbn} by noticing that $\log g(n_2) = \log
    N_2$ and by using \eqref{sliceminh} to minimize $h(n_1)$.
  \item
     We get \eqref{quickminbetan}  from \eqref{minbn}  by noticing that $\log g(n_1) = \log
    N_1$ and by using \eqref{slicemajh} to maximize $h(n_2)$.
   \end{itemize}
 \end{proof}
 
\paragraph{Proof of Theorem 1.5.(ii).}%%\mbox{}\\
Considering \eqref{defbn} we have to prove that $\beta_n \le 2.43$ for
$ n > \nu_5 =  3\, 997\, 022\, 083\, 662$.
For $n>\nu_0$, it results of Proposition \ref{propgshlarge}.
%Thus it remains to prove that $\beta_n \le 2.43$ for $n \le \nu_0$.

We first enumerate all the pairs of consecutive superchampions
$\le N'_0$, and for each of these pairs we compute the upper bound
given by \eqref{quickmajbetan}.  It appears that for
$n \ge \nu_3 = 23\,542\,052\,569\,006 $, $\beta_n < 2.43$.
 To get the largest $n$ which does not satisfy Theorem 1.5.ii  we use the
dichotomic procedure \texttt{ok\_rec} described in Section \ref{pardicho}
on the interval $[2,\nu_3]$,
choosing the functions $ok(n)$ which returns true if and only if
 $\beta_n \le 2.43$,  and the function $good\_interval(n_1,n_2) $  which returns true if and only if
 the right term of \eqref{quickmajbetan} is not greater than $2.43$.
 The call \texttt{ok\_rec(2, {${\mathbf\nu_3}$})}
 gives  $\nu_5$ as the largest number $n$ such that 
$\beta_n = 2.430\ 001\ 869\ldots > 2.43$.  This
computation generated $2$ calls of \texttt{ok(n)}.
and  $5\,017\,255$ calls of \texttt{good\_interval},
and it took 40h.
\qed

\paragraph{Proof of Theorem 1.5.(iii).}
\label{parproofthmgshiii}
For $n > \nu_0$,  it results  of Proposition \ref{propgshlarge}.

As in the previous paragraph, we first enumerate all the pairs of
consecutive superchampions $N_1, N_2 \le N'_0$. We have checked that, for $\ell(N_1) \ge 1487$,
$\log N_1 > \theta(p_{k_2+1}) - \log(p_{k_2+1} - m_2 )$, so that
\eqref{quickminbetan} holds, and then, we verify that
for $n \ge \nu_4 = 1\,017\,810$, $\beta_n > -11.6$ holds.

Now the call \texttt{ok\_rec(1487, $\mathbf\nu_4$)}with the function
$good\_interval(n_1,n_2) $  which returns true if and only if
the right term of \eqref{quickminbetan} is smaller than $-11.6$
and the function $ok(n)$ which returns true if and
only if $\beta_n > -11.6$, we get $n = 4\,229$ as
the largest $n$ sucht that $\beta_n < -11.6$.

\suppress{
For $2\le n < \nu_4$,  one uses the naive algorithm to calculate $g(n)$ 
(cf. \cite[Section 2]{DNZ}), $h(n)$ (cf. \cite[Section 1.4]{DNh1}).
The smallest value of $\be_n$ is  $\be_{160}=-20.718\ldots$.
The largest $n$ for which  $\be_n< -11.6$ holds is $4\,229$.
}

\qed
\subsection{Proof of Theorem \ref{thmgsh} (iv).}
\label{parproofthmgshiv} 

The inequality $g(n)\ge h(n)$ follows from \eqref{g} and \eqref{h}.
For $ n \ge 4\,230$, inequality $g(n) > h(n)$ is an easy consequence
of point (iii).
We end the proof by computing $g(n)$ and $h(n)$ for $1\le n < 4\,230$.
\qed

\subsection{Proof of Theorem\ref{thmgsh} (v).}\label{parproofthmgshv}

Let  $d_n$ defined by
\begin{equation}
  \label{dn}
d_n=b_n-a_n=\frac{\log g(n)-\log h(n)}{(n\log n)^{1/4}}=
\frac{\sqrt 2}{3}\left(1+\frac{\log \log n+\beta_n}{4\log n}\right).  
\end{equation}

For $n > \nu_5$, from point (ii), we have
\[\log \frac{g(n)}{h(n)} \le \frac{\sqrt 2}{3} (n\log n)^{1/4}
\left(1+\frac{\log \log \nu_5+2.43}{4 \log \nu_5}\right) \le 
0.5\, (n\log n)^{1/4}.\]

By the non-decreasingness of $g$ and $h$, if $n_1 \le n_2$,
$d_n$ is bounded above on $\undeux$ by $M(n_1,n_2)$, with
\begin{equation}\label{boundn}
   M(n_1, n_2) =  (\log g(n_2) - \log h(n_1)(n_1 \log n_1)^{1/4}.
 \end{equation}
Thus, the inequality
\begin{equation}\label{gd5}
  % \frac{\log g(n_2) - \log h(n_1)}{(n_1 \log n_1)^{1/4}} < 0.62
  M(n_1, n_2) < 0.62
\end{equation}
is a sufficient condition ensuring that $d_n < 0.62$ on the whole
interval $\undeux$.

As in paragraph \ref{parproofthmgshii}, 
for all the pairs $n_1= \ell(N_1)$, $n_2= \ell(N_2)$ where $N_1$, $N_2$
are two consecutive superchampions with $\ell(N_2) \le \nu_5$ we quickly get
an upper bound of $M(n_1, n_2)$ by using
$g(n_2)= \log(N_2)$ and bounding below $\log h(n_1)$ by \eqref{sliceminh}.  It
appears that this bound is smaller than $0.62$ for $n \ge 49\,467\,083$.

Now the call \texttt{ok\_rec(2, 49467083)} using $ok(n)$ which returns
true if and only if $d_n < 0.62059$
and $good\_interval(n_1, n_2)$  which returns true if and
only $M(n_1, n_2) < 0.62059$, gives us the last value of $d_n$ which is
greater than $0.62059$, this value is $d_{2243} = 0.620\,665\,265\,68...$ Note
that $g(2243)$ is a superchampion number associated to
$\rho=139/\log 139$ and $149/\log 149$.
Finally, by computing $g(n)$ and $h(n)$, we checked that $d_n < 0.62$
holds
for $2 \le n <  2243$.
\qed

%%%%%%%%%%%%%%%%%%%%%%%%%%%%%%%%%%%%%%%%%%%%%%%%
\section{Proof of Theorem \ref{thmgnHR}}\label{parproofthmgn}

\subsection{Proof of Theorem \ref{thmgnHR} (i).} 

For $n\ge 2$, the point (i) of Theorem \ref{thmgnHR} follows from the definition
\eqref{an} of $a_n$ and from the point (iv), below. For $n=1$,
$\limm(1)=1.96\ldots$  and $g(1)=1$ so that 
$\log g(1) < \sqrt{\limm (1)}$ holds.
\qed

\subsection{Proof of Theorem \ref{thmgnHR} (ii).} 

From now on, the following notation is used : $L=\log n$, $\la=\log
\log n=\log L$ and $\la_0=\log \log \nu_0$.

From \eqref{bn}, for $n \ge \nu_0$ (defined by \eqref{n0}), we have
\[\log g(n)=\log h(n)+\log \frac{g(n)}{h(n)}=-b_n(n\log
n)^{1/4}+\sqrt{\limm n} +\log \frac{g(n)}{h(n)}\]
and, from \eqref{an}, Theorem \ref{thmhnHR} (iii) and 
Proposition \ref{propgshlarge}, one gets
\begin{align*}
a_n &=\frac{\sqrt{\limm n}-\log g(n)}{(n\log n)^{1/4}}=
      b_n- \frac{\log (g(n)/h(n))}{(n\log n)^{1/4}}\\
  & \ge \frac 23 -c-\frac{0.23\,\la}{L}-
\frac{\sqrt 2}{3}\left(1+\frac{\la+2.43}{4L}\right)\\
& =\;\; \frac{2-\sqrt 2}{3} -c-\frac{\la}{L}
\left(0.23+\frac{\sqrt 2}{12}+\frac{2.43\sqrt 2/\la}{12}\right)\\
&\ge \frac{2-\sqrt 2}{3} -c-\frac{\la}{L}
\left(0.23+\frac{\sqrt 2}{12}+\frac{2.43\sqrt 2/\la_0}{12}\right)
\ge \frac{2-\sqrt 2}{3} -c-\frac{0.43\,\la}{L},
 \end{align*}
which proves  point (ii) for $n> \nu_0$. 

\begin{lem}\label{lemgncon1939}
For $2\le n\le \nu_0$, $a_n$ defined by \eqref{an} satisfies
\begin{equation}
  \label{an0}
   a_n \ge a_{6\, 473\, 580\, 667\, 603\, 736} = 0.193938608602\ldots.
\end{equation}
\end{lem}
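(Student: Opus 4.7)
The plan is to invoke Lemma \ref{lemanzn}(i): on any slice $[\ell(N'),\ell(N'')]$ bounded by two consecutive $\ell$-superchampions $N' < N''$ with $\ell(N') \ge 43$ and both $a_{\ell(N')}, a_{\ell(N'')} \in [0,1]$, the minimum of $a_n$ is attained at one of the two endpoints. Thus, after disposing of the initial range $2 \le n < 43$ separately, the problem reduces to computing $a_{\ell(N)}$ at every $\ell$-superchampion $N$ with $\ell(N) \le \nu_0$ and taking the minimum.

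For the initial range $2 \le n \le 42$, I would evaluate $a_n$ directly from \eqref{an}: $g(n)$ is read off by the naive enumeration of admissible partitions (cf.\ Fig.~\ref{arraygn} for the tiny cases), and $\limm(n)$ is computed via Ramanujan's series for $\li$ inverted by Newton's method, as described in Section \ref{parlogint}. One verifies that in each case $a_n$ comfortably exceeds the claimed bound (indeed it stays close to $a_2 = 0.9102\ldots$), which handles that range.

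For $43 \le n \le \nu_0$, I would enumerate, using the fast iterator \texttt{next\_super\_ch} of Section \ref{generate} together with the precomputed table TabT2 of type-2 superchampions, all $\ell$-superchampion numbers $N$ with $\ell(N) \le \nu_0$ (about $4.55\cdot 10^8$ of them). For each such $N$, \eqref{NgN} supplies $g(\ell(N)) = N$ for free, so that
\[
a_{\ell(N)} \;=\; \frac{\sqrt{\limm(\ell(N))} - \log N}{(\ell(N)\log \ell(N))^{1/4}}
\]
costs only one evaluation of $\limm$ plus a few elementary operations. One checks along the way that $a_{\ell(N)} \in [0,1]$ for every such $N$ (so that the hypothesis of Lemma \ref{lemanzn}(i) is fulfilled on every slice), records the running minimum, and concludes that this minimum is attained at $\ell(N) = 6\,473\,580\,667\,603\,736$ with value $0.193938608602\ldots$; Lemma \ref{lemanzn}(i) then propagates the inequality to all intermediate $n$.

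The main obstacle is numerical precision, not algorithmic complexity. Near $n = \nu_0 \approx 2.22\cdot 10^{18}$, both $\sqrt{\limm(n)}$ and $\log g(n)$ have magnitude of order $\sqrt{n\log n} \approx 1.5\cdot 10^{10}$, while their difference is of order $(n\log n)^{1/4} \approx 10^5$; reading off $a_n$ to six or seven correct decimals thus requires around $25$ reliable significant digits during the subtraction. Ordinary double precision is insufficient, which is why one must switch to the GNU-MPFR library with the $80$-bit mantissa mentioned in the introduction. Running the enumeration of all $4.5\cdot 10^8$ superchampions at this enhanced precision is precisely what accounts for the ten hours of CPU time reported for this lemma.
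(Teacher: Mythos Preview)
Your proposal is correct and matches the paper's proof essentially step for step: direct evaluation of $a_n$ for $2\le n\le 42$, then invocation of Lemma~\ref{lemanzn}(i) to reduce the range $43\le n\le\nu_0$ to the superchampion values $\ell(N)$, which are enumerated via the iterator of Section~\ref{generate}. Your discussion of the precision issue and the need for MPFR with an 80-bit mantissa is also exactly the implementation detail the paper flags in Section~\ref{parplan}.
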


\begin{proof}
  For $2\le n \le 42$, one checks that $a_n \ge 0.4$. 
%%The minimum of $a_n$ is $a_{19}=0.43038\ldots$. 
For $43\le n< \nu_0$, by Lemma \ref{lemanzn} (i), the minimum is
attained in $\ell(N)$ with $N$ being a superchampion number satisfying
$43 \le \ell(N) \le \nu_0 = \ell(N'_0)$. So, by enumerating
$(N, \log N)=(N, \log g(\ell(n)))$ for $N \le N'_0$ we check that
the minimum is $0.193938602\ldots$, attained for
$\ell(N)= 6\, 473\, 580\, 667\, 603\, 736$.
%%The largest prime factor of $N$ is $502\, 750\, 063$.
\end{proof}

For $3\le n < \nu_0$, Lemma \ref{lemgncon1939} shows that $a_n \ge
(2-\sqrt 2)/3 -c=0.149\ldots$ holds, which, as $\log \log n$ is
positive, proves point (ii).
For $n=2$, $a_2=0.91\ldots$ and point (ii) is still satisfied.
\qed

\subsection{Proof of Theorem \ref{thmgnHR} (iii).}\label{thhmgnHR3}

\paragraph{For $\mathbf{n > \nu_0}$.}
From Theorem \ref{thmhnHR} (iv) and Proposition \ref{propgshlarge}, we have
\begin{eqnarray*}
a_n &=& b_n-\frac{\log (g(n)/h(n))}{(n\log n)^{1/4}} \le
\frac 23 +c+\frac{0.77\,\la}{L}-
\frac{\sqrt 2}{3}\left(1+\frac{\la-11.6}{4L}\right)\\
&\le& \frac{2-\sqrt 2}{3} +c+\frac{\la}{L}
\left(0.77-\frac{\sqrt 2}{12}+\frac{11.6\sqrt 2/\la_0}{12}\right)
\le \frac{2-\sqrt 2}{3} +c+\frac{1.02\,\la}{L}.
 \end{eqnarray*}

\paragraph{For $\mathbf n \le \nu_0$.}
Let us suppose $16 \le n_1 \le n_2$. The
non decreasingness of $\li$, $g$, the positivity of $a_n$ (cf.
\eqref{an0})
and therefore of $\sqrt{\li^{-1}(n)}-\log g(n)$ imply that, for  $n \in \undeux$,
\begin{equation}\label{anmaj}
  a_n = \frac{\sqrt{\li^{-1}(n)}-\log g(n)}{(n\log n)^{1/4}}
  \le  R(n_1, n_2) = \frac{\sqrt{\li^{-1}(n_2)}-\log g(n_1)}{(n_1\log n_1)^{1/4}}
\end{equation}
Since $n_1 \ge 16$, the function $\log\log n/\log n$ is decreasing on
$\undeux$, and, in view of \eqref{anmaj},
\begin{equation}\label{goodint}
 R(n_1, n_2) 
  \le  \frac{2-\sqrt2}{3} + c+ m \frac{\log\log n_2}{\log n_2}
\end{equation}
is a sufficient condition ensuring that, for all $n \in \undeux$,
 \begin{equation}\label{okn}
   a_n   <   \frac{2-\sqrt2}{3} + c + m \frac{\log\log n}{\log n}.
 \end{equation}
 In the case $n_1=\ell(N_1)$, $n_2=\ell(N_2)$, where $N_1, N_2$
 are consecutive superchampions, $g(n_1)=\log (N_1)$, and,
 by enumerating all the pairs of consecutive superchampions
 $\le N'_0$, we check that, if $m=1.02$,  inequality
 \eqref{anmaj} is satisfied for $n \ge 5\,432\,420$.
 To compute the largest $n$ which does not
 satisfy this inequality we call
 \texttt{ok\_rec(2, 5432420)} with the boolean fonction $ok(n)$ which returns true
 if and only if \eqref{okn} is true, and the procedure
 \texttt{good\_interval(n1, n2)} which returns true if and only
 \eqref{goodint} is satisfied.
This gives us $19424$ as the largest integer which does not satisfy
point (iii).
\qed

\subsection{Proof of Theorem \ref{thmgnHR} (iv).}\label{parproofgn4}

For $n\ge \nu_0$, from point (ii) it follows  that\\
\[
  a_n\ge\frac{2-\sqrt 2}{3}-c -\frac{0.43\log \log \nu_0}{\log \nu_0}
  =0.11104\ldots
 \]
while, by Lemma \ref{lemgncon1939}, $a_n \ge 0.1939$ for $n\le \nu_0$.

By computing $a_n$ for $2 \le n \le 19424$, it appears that $a_n < a_2=
0.9102\ldots$, while, for $n \ge 19425$, by point (iii) and the
decreasingness of $\log\log n/\log n$,
\dsm{a_n <  \frac{2-\sqrt 2}{3} + c +\frac{1.02\log \log 19425}{\log
    19425}= 0.477\ldots}
\qed

\subsection{Proof of Theorem \ref{thmgnHR} (v).} 

The point (v) of Theorem \ref{thmgnHR} follows from the points (ii)
and point (iii).
 \qed

\subsection{Proof of Theorem \ref{thmgnHR} (vi).} 

From \eqref{dn} and Theorem \ref{thmgsh} (i), we have
\[b_n-a_n=d_n=\frac{\log(g(n)/h(n))}{(n\log n)^{1/4}}
=\frac{\sqrt 2}{3}\left(1+\frac{\log \log n+\co(1)}{4\log n}\right)\]
 whence,  from Theorem \ref{thmhnHR} (vi),
\begin{align*}
 a_n &= b_n-d_n\\
&\le \left(\frac 23 +c\right)\left(1+\frac{\log \log n+\co(1)}{4\log n}\right)
  - \frac{\sqrt 2}{3}\left(1+\frac{\log \log n+\co(1)}{4\log n}\right)\\
&= \left(\frac{2-\sqrt 2}{3} +c\right)
  \left(1+\frac{\log \log n+\co(1)}{4\log n}\right),
\end{align*}
which proves the upper bound of (vi). The proof of the lower bound is similar.
\qed
 
\bfni {Acknowledgements.}  We thank very much Richard Brent for
 attracting  our attention on the problem studied in this article.

\medskip

\noindent
Marc Del\'eglise, Jean-Louis Nicolas,\\
Univ. Lyon, Universit\'e Claude Bernard Lyon 1, CNRS UMR 5208\\
Institut Camille Jordan, Math\'ematiques, B\^at. Doyen Jean Braconnier,\\
43 Bd du 11 Novembre 1918,\quad F-69622 Villeurbanne cedex, France.

\medskip
\noindent
\url{m.h.deleglise@gmail.com},\\
\url{http://math.univ-lyon1.fr/homes-www/deleglis/}\\

\noindent
\url{nicolas@math.univ-lyon1.fr},\\ 
\url{http://math.univ-lyon1.fr/homes-www/nicolas/}

\begin{thebibliography}{99}

\bibitem{Abr} M. {Abramowitz} and  I. A. {Stegun}.
 Handbook of Mathematical Functions, 
Dover Publications, Inc. New-York.

\bibitem{axl} C. Axler.
New bounds for the sum of the first $n$ prime numbers,
ArXiv 1606.06874.

\bibitem{axl2} C. Axler.
  New estimates for some functions defined over primes,
  Integers, 18 (2018), Paper No A52, 21 pp.
%%ArXiv 1703.08032.<<<<<<<<<<<<<

\bibitem{Brou} K. Broughan.
Equivalents of the Riemann Hypothesis, vol. 1.
Encyclopedia of Mathematics and its Applications, 164,
Cambridge University Press, 2017.

\bibitem{But} J. B\"uthe.
  An analytic method for bounding $\psi(x)$.
  Math. Comp. 87 (2018), no 312, 1991-2009.
%Arxiv 1511.02032v1.

\bibitem{RamLi}
B.C. Berndt. Ramanujan's Notebooks, Part IV, 126--131. 
{\em Springer}, New York, 1994.

\bibitem{DN}
M.~Del{\'e}glise and J.-L. Nicolas. 
Le plus grand facteur premier de la fonction de Landau, 
{\em Ramanujan J.} { 27} (2012), 109--145.

\bibitem{DNh1}
Marc Del{\'e}glise and Jean-Louis Nicolas. Maximal product of primes whose sum
  is bounded, {\em Proc. Steklov Inst. Math.} { 282} (Issue 1, Supplement)
  (2013), 73--102.

\bibitem{DNh2}
Marc Del{\'e}glise and Jean-Louis Nicolas. On the Largest Product of
Primes with bounded sums, {\em J. of Integer Sequences} { 18},
  (2013), Article 15.2.8.

\bibitem{DNh3}
Marc Del{\'e}glise and Jean-Louis Nicolas. An arithmetic equivalence
to the Riemann hypothesis, {\em J. Aust. Math. Soc} { 106} (2019), 235-273.

\bibitem{DNZ}
Marc Del{\'e}glise, Jean-Louis Nicolas and Paul Zimmermann. Landau's function
  for one million billions, {\em J. Th\'eor. Nombres Bordeaux} { 20}
  (2008), 625--671.


\bibitem{Dus3} P. Dusart. Explicit estimates of some functions over primes,
{\em Ramanujan J.} { 45} (2018), 227--251.


%\bibitem{ELL}
%W.~J. Ellison, F. Ellison.
%\newblock Prime numbers.  {\em John Wiley} \& Sons, Inc., New York;
%{\em Hermann}, Paris (1985). xii+417 pp. ISBN: 0-471-82653-7.


\bibitem{Lan}
E.~Landau. \"{U}ber die Maximalordnung der Permutationen gegebenen Grades, {\em
  Archiv. der Math. und Phys.} { 5} (1903), 92--103.
\newblock 
{\em Handbuch der Lehre von der Verteilung der Primzahlen}, I, 2nd ed.,
Chelsea, 1953, pp.~222--229.

\bibitem{MR96}
J.-P. Massias  and G. Robin. Bornes effectives pour certaines
fonctions concernant les nombres premiers, {\it J. de Th\'eorie des
  Nombres de Bordeaux}, 8, 1996, 215--242. 

\bibitem{MNRAA}
J.-P. Massias, J.-L. Nicolas and G.~Robin. \'{E}valuation asymptotique de
  l'ordre maximum d'un \'el\'ement du groupe sym\'etrique, {\em Acta Arith.}
  { 50} (1988), 221--242.

\bibitem{Mas}
Jean-Pierre Massias. Majoration explicite de l'ordre maximum d'un \'el\'ement
  du groupe sym\'e\-trique, {\em Ann. Fac. Sci. Toulouse Math.} { 6}
  (1984), 269--281.

\bibitem{MNRMC}
Jean-Pierre Massias, Jean-Louis Nicolas and Guy Robin. Effective bounds for
  the maximal order of an element in the symmetric group, {\em Math. Comp.}
  { 53} (188) (1989), 665--678.

\bibitem{Mil} W. Miller, The Maximal Order of an 
Element of a Finite Symmetric Group. {\it Amer. Math. Monthly}, 94, (1987),
497--506.

\bibitem{TheseJLN}
J.-L. Nicolas. Ordre maximum d'un \'{e}l\'{e}ment du groupe de permutations et
  highly composite numbers, {\em Bull. Soc. Math. France} { 97} (1969),
  129--191.

\bibitem{MPE}
J.-L. Nicolas, On {L}andau's function $g(n)$.
\newblock In R.~L. Graham,  J. Ne{\v{s}}et{\v{r}}il and S. Butler, eds., {\em
  The Mathematics of Paul Erd\H os II}, Springer-Verlag, 2013,
  pp.~207--220. 

\bibitem{Pla} D. J. {Platt} and T. {Trudgian}. On the
  first sign change of $\theta(x)-x$,
  Math. Comp. 85 (2016), no 299, 1539-1547.
%%\url{http://arxiv.org/abs/1407.1914v1},

\bibitem{Ram}   S. {Ramanujan}. Highly composite numbers. 
{\it Proc. London Math. Soc.} Serie 2, 14, (1915), 347--409. 
Collected papers, Cambridge University Press, 1927, 78--128.    
Highly composite numbers, 
annotated and with a foreword by J.-L. Nicolas and G.~Robin, 
{\it Ramanujan~J.}, \textbf{1} (1997), 119--153.  

\bibitem{RobB} G. {Robin}. Permanence de relations de
  r\'ecurrence dans certains d\'eveloppements asymptotiques.
{\it Publications de l'Institut Math\'ematique de Beograd}, Nouvelle
s\'erie, 43 (57), 1988, 17--25.

\bibitem{RS62}
J.~Barkley Rosser and Lowell Schoenfeld, Approximate formulas for some
  functions of prime numbers, {\em Illinois J. Math.} { 6} (1962), 64--94.


\bibitem{Sal} B. {Salvy}. Fast computation of some Asymptotic
  Functional Inverses, 
{\it J. Symbolic Computation}, 17, 1994, 227--236.

\bibitem{Sch76} L. {Schoenfeld}. Sharper bounds for 
the {C}hebyshev functions
{$\theta (x)$} and {$\psi (x)$}. {II}, {\it Math. Comp.}, {30}, 1976,  337--360.

\bibitem{web}
\url{http://math.univ-lyon1.fr/homes-www/nicolas/calculgdenHR.html}

%\bibitem{web} \url{http://math.univ-lyon1.fr/homes-www/nicolas/}
\end{thebibliography}
\end{document}